\newtheorem{Theorem}{Theorem}[section]
\newtheorem{Proposition}[Theorem]{Proposition}
\newtheorem{Lemma}[Theorem]{Lemma}
\newtheorem{Corollary}[Theorem]{Corollary}
\theoremstyle{definition}
\newtheorem{Definition}[Theorem]{Definition}
\newtheorem{Remark}[Theorem]{Remark}
\newcommand{\bTheorem}[1]{
\begin{Theorem} \label{T#1} }
\newcommand{\eT}{\end{Theorem}}
\newcommand{\bProposition}[1]{
\begin{Proposition} \label{P#1}}
\newcommand{\eP}{\end{Proposition}}
\newcommand{\bLemma}[1]{
\begin{Lemma} \label{L#1} }
\newcommand{\eL}{\end{Lemma}}
\newcommand{\bCorollary}[1]{
\begin{Corollary} \label{C#1} }
\newcommand{\eC}{\end{Corollary}}
\newcommand{\bRemark}[1]{
\begin{Remark} \label{R#1} }
\newcommand{\eR}{\end{Remark}}
\newcommand{\bDefinition}[1]{
\begin{Definition} \label{D#1} }
\newcommand{\eD}{\end{Definition}}
\newcommand{\dif}{\mathrm{d}}
\newcommand{\Dif}{\mathrm{d}}
\newcommand{\mf}{\mathbb{F}}
\newcommand{\mr}{\mathbb{R}}
\newcommand{\prst}{\mathbb{P}}
\newcommand{\p}{\mathbb{P}}
\newcommand{\mn}{\mathbb{N}}
\newcommand{\mt}{\mathbb{T}^3}
\newcommand{\bu}{\mathbf u}
\newcommand{\StoB}{\left(\Omega, \mf,(\mf_t )_{t \geq 0},  \mathbb{P}\right)}
\newcommand{\bfv}{\mathbf{v}}
\newcommand{\bfr}{\mathbf{r}}
\newcommand{\bfe}{\mathbf{e}}
\newcommand{\bfF}{\mathbf{F}}
\newcommand{\bFormula}[1]{
\begin{equation} \label{#1}}
\newcommand{\eF}{\end{equation}}
\newcommand{\Ov}[1]{\overline{#1}}
\newcommand{\vr}{\varrho}
\newcommand{\vu}{\vc{u}}
\newcommand{\vm}{\vc{m}}
\newcommand{\vc}[1]{{\bf #1}}
\newcommand{\Div}{{\rm div}}
\newcommand{\Grad}{\nabla}
\newcommand{\Dt}{\frac{\rm d}{{\rm d}t}}
\newcommand{\dx}{\,{\rm d} {x}}
\newcommand{\dt}{\,{\rm d} t }
\newcommand{\vU}{\vc{U}}
\newcommand{\vV}{\vc{V}}
\newcommand{\intO}[1]{\int_{\T^3} #1 \dx}
\newcommand{\vv}{\vc{v}}
\newcommand{\D}{{\rm d}}
\newcommand{\ep}{h}
\newcommand{\R}{\mathbb{R}}
\newcommand{\T}{\mathbb{T}}
\newcommand{\N}{\mathbb{N}}
\newcommand{\E}{\mathbb{E}}
\newcommand{\intTor}[1]{\int_{\T^3} #1 \ dx}
\def\softd{{\leavevmode\setbox1=\hbox{d}%
          \hbox to 1.05\wd1{d\kern-0.4ex{\char039}\hss}}}
\newcommand{\defeq}{\vcentcolon=}
\newcommand{\dih}[1]{\left(\textnormal{div}_h\,#1\right)_K}
\newcommand{\diht}[1]{\left(\widetilde{\textnormal{div}_h}\,#1\right)_K}
\newcommand{\laph}[1]{\left(\Delta_h\,#1\right)_K}
\newcommand{\laphs}[1]{\left(\Delta^s_h\,#1\right)_K}
\newcommand{\grad}{\nabla\, }
\newcommand{\bfQ}{\mathbf{Q}}
\newcommand{\bff}{\mathbf{f}}
\newcommand{\avrgK}[1]{(\widetilde{#1})^p_{K}}
\newcommand{\avrg}[1]{(\Ov{#1})_{\sigma}}
\newcommand{\jump}[1]{\left\llbracket#1\right\rrbracket}
\newcommand{\n}[2]{\left\lVert{#2}\right\rVert_{#1}}
\newcommand{\grid}{\mathcal{T}}
\newcommand{\elem}{K}
\newcommand{\elemL}{L}
\newcommand{\edge}{\mathrm{E}}
\newcommand{\bv}[1]{{#1}_{\sigma}}
\newcommand{\dv}[1]{{#1}_{K}}
\newcommand{\fluxder}[1]{\left(\partial_h^p {#1}\right)_K}
\newcommand{\der}[1]{\left(\widetilde{\partial_h^p} {#1}\right)_{K}}
\newcommand{\derp}[1]{\left(\partial_h^{p+} {#1}\right)_{K}}
\newcommand{\derm}[1]{\left(\partial_h^{p-} {#1}\right)_{K}}
\newcommand{\faces}{\mathrm{E}}
\newcommand{\facesK}{\faces(K)}
\newcommand{\vrh}{\vr_K}
\newcommand{\bfu}{\mathbf{u}}
\DeclareMathOperator{\divv}{div}
\newcommand{\intOT}[1]{\int_{0}^T {#1}\dt }
\newcommand{\intOhi}[1]{\int_{K}{#1}\dx}
\definecolor{Cgrey}{rgb}{0.85,0.85,0.85}
\definecolor{Cblue}{rgb}{0.50,0.85,0.85}
\definecolor{Cred}{rgb}{1,0,0}
\definecolor{fancy}{rgb}{0.10,0.85,0.10}
\newcommand\Cbox[2]{%
    \newbox\contentbox%
    \newbox\bkgdbox%
    \setbox\contentbox\hbox to \hsize{%
        \vtop{
            \kern\columnsep
            \hbox to \hsize{%
                \kern\columnsep%
                \advance\hsize by -2\columnsep%
                \setlength{\textwidth}{\hsize}%
                \vbox{
                    \parskip=\baselineskip
                    \parindent=0bp
                    #2
                }%
                \kern\columnsep%
            }%
            \kern\columnsep%
        }%
    }%
    \setbox\bkgdbox\vbox{
        \color{#1}
        \hrule width  \wd\contentbox %
               height \ht\contentbox %
               depth  \dp\contentbox
        \color{black}
    }%
    \wd\bkgdbox=0bp%
    \vbox{\hbox to \hsize{\box\bkgdbox\box\contentbox}}%
    \vskip\baselineskip%
}
\date{}
\begin{document}


\title{A convergent finite volume scheme for the stochastic barotropic \\compressible Euler equations}

\author{Abhishek Chaudhary \footnotemark[2] 
\and Ujjwal Koley \footnotemark[3]
}

\date{\today}

\maketitle

\medskip
\centerline{$^\dagger$ Centre for Applicable Mathematics, Tata Institute of Fundamental Research}
\centerline{P.O. Box 6503, GKVK Post Office, Bangalore 560065, India}\centerline{abhi@tifrbng.res.in}

\medskip
\centerline{$\ddagger$ Centre for Applicable Mathematics, Tata Institute of Fundamental Research}
\centerline{P.O. Box 6503, GKVK Post Office, Bangalore 560065, India}
\centerline{ ujjwal@math.tifrbng.res.in}

\medskip

\begin{abstract}
In this paper, we analyze a semi-discrete finite volume scheme for the three-dimensional barotropic compressible Euler equations driven by a \emph{multiplicative} Brownian noise. We derive necessary 
a priori estimates for numerical approximations, and show that the Young measure generated by the numerical approximations converge to a \emph{dissipative measure--valued martingale} solution to the stochastic compressible Euler system. These solutions are probabilistically weak in the sense that the driving noise and associated filtration are integral part of the solution. Moreover, we demonstrate \emph{strong} convergence of numerical solutions to the regular solution of the limit systems at least on the lifespan of the latter, thanks to the weak (measure-valued)--strong uniqueness principle for the underlying system. To the best of our knowledge, this is the first attempt to prove the convergence of numerical approximations for the underlying system.
\end{abstract}

\medskip
{\bf Keywords:} Compressible fluids; Euler system; Finite volume schemes; Stochastic forcing; Martingale solutions; Dissipative measure--valued solution; Weak--strong uniqueness; Entropy stable fluxes; Convergence.
%
%

\tableofcontents

\section{Introduction}
Most real world models involve a large number of parameters and coefficients which cannot be exactly determind. Furthermore, there is a considerable uncertainty in the source terms, initial or boundary data due to empirical approximations or measuring errors. Therefore, study of PDEs with randomness (stochastic PDEs) certainly leads to greater understanding of the actual physical phenomenon.
In this paper, we are interested in a stochastic variant of the \emph{compressible barotropic
Euler system}, a set of balance laws driven by
a \emph{nonlinear multiplicative} noise for mass density $\vr$ and the bulk velocity $\vu$ describing the flow of isentropic gas, where the thermal effects are neglected. The system of equations read
\begin{equation}
\begin{aligned}
 \label{P1}
\D \vr + \Div (\vr \vu) \dt &= 0, \\ 
\D (\vr \vu) + \left[ \Div (\vr \vu \otimes \vu)+a  \Grad \vr^\gamma \right]  \dt  &=  \Psi(\vr, \vr \vu) \,\D W
\end{aligned}
\end{equation}
Here $\gamma>1$ denotes the adiabatic exponent, $a>0$ is the squared reciprocal of the Mach number (the ratio between average velocity and speed of sound). The driving process $W$ is a cylindrical Wiener process defined on some filtered probability space $(\Omega,\mf,\{\mf_t\}_{t\ge0},\p)$, and the noise coefficient $\Psi$ is nonlinear and satisfies suitable growth assumptions (see Subsection~\ref{stoc} for the complete list of assumptions). Note that $(\vr, \vr \vu) \mapsto \Psi(\vr, \vr \vu)$ is a given Hilbert space valued function signifying the \emph{multiplicative} nature of the noise. We consider the stochastic compressible Euler equations \eqref{P1}--\eqref{P2} in three spatial dimensions on a periodic domain i.e., on the torus $\mathbb{T}^3$.
The initial conditions are random variables
\begin{equation} \label{P2}
\vr(0,\cdot) = \vr_0, \ \vr \vu(0, \cdot) = (\vr \vu)_0,
\end{equation}
with sufficient spatial regularity to be specified later. 

\subsection{Compressible Euler Equations}
The deterministic counterpart of the stochastic compressible Euler equations \eqref{P1}--\eqref{P2} have received considerable attention and, in spite of monumental efforts, satisfactory well-posedness results are still lacking. It is well-known that the smooth solutions to deterministic counterpart of \eqref{P1}--\eqref{P2} exists only for a finite lap of time, after which singularities may develop for a generic class of initial data. Therefore, global-in-time (weak) solutions must be sought in the class of discontinuous functions. But, weak solutions may not be uniquely determind by their initial data and admissibility conditions must be imposed to single out the physically correct solution. However, the specification of such an admissibility criteria is still open. Indeed, thanks to recent phenomenal work by De Lellis $\&$ Szekelyhidi \cite{DelSze3,DelSze}, and further investigated by Chiodaroli et. al. \cite{chio01}, Feireisl \cite{Feireisl002}, it is well understood that the compressible Euler equations is desparetly ill-posed, due to the lack of compactness of functions satisfying the equations. Even if the initial data is smooth, the global existence and uniqueness of solutions can fail. Moreover, a quest for the existence of global-in-time weak solutions to deterministic counterpart of \eqref{P1}--\eqref{P2} for \emph{general} initial data remains elusive. Given this status quo, it is natural to seek an alternative solution paradigm for compressible Euler system. To that context, we recall the 
framework of \emph{dissipative} Young measure-valued solutions in the context of compressible Navier--Stokes system, being first introduced by Neustupa in \cite{Neus}, and subsequently revisited by Feireisl et. al. in \cite{Fei01}. In a nutshell, these solutions are characterized by a parametrized Young measure and a concentration Young measure in the total energy balance, and they are defined globally in time. 

The study of stochastic compressible Euler equations \eqref{P1}--\eqref{P2}
is a relatively new area of focus within the broarder field of stochastic PDEs, and a satisfactory well/ill-posedness result is largely out of reach. However, we want to emphasize that, to design efficient numerical schemes it is of paramount importance to have prior knowledge about the existence of global-in-time solutions for the underlying system of equations. Without such knowledge, there is no way to establish whether or not the solution produced by a numerical scheme is an approximation of the true solution. To that context, let us first mention the work by Berthelin $\&$ Vovelle \cite{BV13}, where the authors established the existence of a martingale solution for \eqref{P1}--\eqref{P2} in \emph{one} spatial dimension. Moreover, a recent work by Breit et. al. in \cite{BrFeHo2017} revealed that ill-posedness issues for compressible Euler system driven by \emph{additive} noise, in the sense of \cite{DelSze3, Feireisl002}, persist even in the presense of a random forcing. We mention that for compressible Euler equations driven by \emph{multiplicative} noise, the existence of \emph{dissipative measure-valued martingale} solutions was very recently established by Hofmanova et. al. in \cite{MKS01} (see also \cite{K1} for the incompressible case). The authors have shown that the existence can be obtained from a sequence of solutions of stochastic Navier Stokes equations using tools from martingale theory and Young measure theory. 

\subsection{Numerical Schemes}
Parallel to mathematical efforts there has been a huge effort to derive effective numerical schemes for deterministic fluid flow equations, and there is a considerable body of literature dealing with the convergence of numerical schemes for the specific problems in fluid mechanics represented through the barotropic Euler system. In this context, we first mention the work by Karper in \cite{karper} where he has established the convergence of a mixed finite element-discontinuous Galerkin scheme to compressible Euler system under the assumption $\gamma>3$. Subsequently, a series of works \cite{FL17,FLM,FLM1} by Feireisl and his collaborators analyzed the convergence issues for several different semi-discrete numerical schemes via the framework of dissipative measure-valued solutions. Note that the concept of measure–valued solutions introduced in Feireisl et. al. \cite{FLM} (and also \cite{MKS01}) requires the solutions generated by approximate sequences satisfying only the general energy bounds. This is very different from many classical approach where the
existence of measure-valued solution is conditioned by mostly rather unrealistic assumptions of boundedness of certain
physical quantities and the corresponding fluxes. Indeed, assuming
only uniform lower bound on the density and uniform upper bound on the energy they showed that the Lax-Friedrichs-type finite volume schemes generate the dissipative measure–valued solutions
to the barotropic Euler equations.
We also mention that the first numerical evidence that indicated ill-posedness of the Euler system was presented by Elling \cite{ell}. Finally, we mention a series of recent works by Fjordholm et. al. \cite{ulrik_01,ulrik_02} in the context of a general system of hyperbolic conservation laws, where they proved the convergence of a semi-discrete entropy stable finite volume scheme to the \emph{measure-valued} solutions under certain appropriate assumptions.

We remark that, despite the growing interest about the theory of stochastic PDEs and the discretization of stochastic PDEs, the specific question about numerical approximations of stochastic compressible Euler equations is virtually untouched. In fact, the challenges related to numerical aspects of \eqref{P1} are manifold and mostly open, due to the presence of \emph{multiplicative} noise term in \eqref{P1}. Having said this, we mention that there are few results available on stochastic \emph{incompressible} Euler equations. To that context, concerning the convergence of the numerical methods, we mention the work of Brze\'zniak et. al. \cite{carelli}, where the scheme is based on finite elements combined with implicit Euler method.

\subsection{Scope and Outline of the Paper}

The above discussions clearly highlight the lack of effective convergent numerical schemes, for compressible fluid flow equations driven by a \emph{multiplicative} Brownian noise, which are able to take the inherent uncertainties into account, and are equipped with modules that quantify the level of uncertainty. The challenges related to numerical aspects of the underlying problems are mostly open and the research on this frontier is still in its infancy. In fact, the main objective of this article is to lay down the foundation for a comprehensive theory related to numerical methods for \eqref{P1}--\eqref{P2}. Although our work bears some similarities with recent wroks of Fjordholm et. al \cite{ulrik_01,ulrik_02} on deteministic system of conservation laws, and works of Feireisl et. al \cite{FL17,FLM,FLM1} on deterministic Euler systems, the main novelty of this work lies in successfully handling the \emph{multiplicative} noise term. Our problems need to invoke ideas from numerical methods for SDE and meaningfully fuse them with available approximation methods for deterministic problems. This is easier said than done as any such attempt has to capture the noise-noise interaction as well. In the realm of stochastic conservation laws, noise-noise interaction terms play a fundamental role to establish well-posedness theory, for details see \cite{BhKoleyVa, BhKoleyVa_01, BisKoleyMaj, Koley1, Koley2,koley2013multilevel,Koley3}. 

The main contributions of this paper are listed below:
\begin{itemize}
\item [(1)] We develop an appropriate mathematical framework of \emph{dissipative measure-valued martingale} solutions to the stochastic compressible Euler system, keeping in mind that this framework would allow us to establish weak (measure-valued)--strong uniqueness principle. We remark that our solution framework requires only natural energy bounds associated to approximate solutions.
\item [(2)] We show that a Lax-Friedrichs-type numerical scheme for \eqref{P1}--\eqref{P2} generates the \emph{dissipative measure-valued martingale} solutions to the stochastic compressible Euler equations. With the help of the new framework based on the theory of measure--valued solutions, we adapt the concept of $\mathcal{K}$-convergence, first developed in the context of Young measures by Balder \cite{Balder} (see also Feireisl et. al. \cite{FLM1}), to show the pointwise convergence of arithmetic averages (Cesaro means) of numerical solutions to a \emph{dissipative measure-valued martingale} solution of the limit system \eqref{P1}--\eqref{P2}.
\item [(3)] When solutions of the limit continuous problem possess maximal regularity, by making use of weak (measure-valued)--strong uniqueness principle, we show \emph{unconditional} strong $L^1$-convergence of numerical approximations to the regular solution of the limit systems.
\end{itemize}

A breif description of the organization of the rest of the paper is as follows:  we describe all necessary mathematical/technical framework and state the main results in Section~\ref{E}. Moreover, we introduce a Lax-Friedrichs-type finite volume numerical scheme for the underlying system \eqref{P1}--\eqref{P2}. Section~\ref{stability} is devoted on deriving stability properties of the scheme, while Section~\ref{consistency} is focused on deriving suitable formulations of the continuity and momentum equations, and exhibit consistency. In Section~\ref{proof1}, we present a proof of convergence of numerical solutions to a dissipative measure-valued martingale solutions using stochastic compactness.  Section~\ref{w-s} is devoted on deriving the weak (measure-valued) – strong uniqueness principle by making use of a suitable relative energy inequality. Section~\ref{dissipative solution 1} uses the concept of $\mathcal{K}$-convergence to exhibit the pointwise convergence of numerical solutions. Finally, in Section~\ref{proof2}, we make use of weak (measure-valued)--strong uniqueness property to show the convergence of numerical approximations to the solutions of stochastic compressible Euler system \eqref{P1}--\eqref{P2}.


\section{Preliminaries and Main Results}
\label{E}

Here we first briefly recall some relevant mathematical tools which are used in the subsequent analysis and then we state main results of this paper. To begin, we fix an arbitrary large time horizon $T>0$. For the sake of simplicity it will be assumed $a=1$, since its value is not relevant in the present setting. Throughout this paper, we use the letter $C$ to denote various generic constants that may change from line to line along the proofs. Explicit tracking of the constants could be possible but it is highly cumbersome and avoided for the sake of the reader. Let $\mathcal{M}_b(E)$ denote the space of bounded Borel measures on $E$ whose norm is given by the total variation of measures. It is the dual space to the space of continuous functions vanishing at infinity $C_0(E)$ equipped with the supremum norm. Moreover, let $\mathcal{P}(E)$ be the space of probability measures on $E$.

\subsection{Analytic framework}
Let $\gamma \in (0,1)$ be given, and $Z$ be a separable Hilbert space. Let $W^{\gamma,2}(0,T;Z)$ denotes a $Z$-valued Sobolev space which is characterized by its norm
$$
\| g\|^2_{W^{\gamma,2}(0,T;Z)}:= \int_0^T \| g(t)\|^2_{Z}\,dt + \int_0^T \int_0^T \frac{\| g(t)-g(s)\|^2_Z}{|t-s|^{1+2\gamma}}\,dt\,ds.
$$
Then we have following compact embedding result from Flandoli $\&$ Gatarek \cite[Theorem 2.2]{FlandoliGatarek}.
\begin{Lemma}
\label{comp}
If $Z \subset\subset Y$ are two Banach spaces with compact embedding, and real numbers $\gamma \in(0,1)$ satisfy $\gamma >1/2$, then the following embedding
$$
W^{\gamma,2}(0,T;Z) \subset\subset C([0,T]; Y)
$$
is compact.
\end{Lemma}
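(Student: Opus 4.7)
The plan is to combine a fractional Sobolev (Morrey-type) embedding with the Arzelà--Ascoli theorem, using the compact embedding $Z \hookrightarrow\hookrightarrow Y$ to upgrade pointwise boundedness into pointwise precompactness in $Y$.

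First, I would establish the continuous embedding
\[
W^{\gamma,2}(0,T;Z)\hookrightarrow C^{\gamma-1/2}([0,T];Z),
\]
valid because $\gamma>1/2$. The standard way is to fix $g\in W^{\gamma,2}(0,T;Z)$, and for arbitrary $s,t\in[0,T]$ with $s<t$, write
\[
g(t)-g(s)=\frac{1}{t-s}\int_s^t\bigl(g(t)-g(\tau)\bigr)\,d\tau-\frac{1}{t-s}\int_s^t\bigl(g(s)-g(\tau)\bigr)\,d\tau,
\]
take $Z$-norms, and apply Cauchy--Schwarz against the weight $|t-\tau|^{-(1+2\gamma)/2}$ (resp.\ $|s-\tau|^{-(1+2\gamma)/2}$). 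A direct computation using $\gamma>1/2$ yields the Hölder estimate
\[
\|g(t)-g(s)\|_Z\le C(\gamma)\,|t-s|^{\gamma-1/2}\,\|g\|_{W^{\gamma,2}(0,T;Z)},
\]
and averaging over $s$ gives the uniform bound $\sup_{t\in[0,T]}\|g(t)\|_Z\le C(T,\gamma)\,\|g\|_{W^{\gamma,2}(0,T;Z)}$.

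Next, let $\{g_n\}$ be a bounded sequence in $W^{\gamma,2}(0,T;Z)$. Composing with the continuous embedding $Z\hookrightarrow Y$, the estimates above transfer to $Y$ and show that $\{g_n\}$ is uniformly bounded and uniformly Hölder of exponent $\gamma-1/2$ in $Y$; in particular it is equicontinuous in $C([0,T];Y)$. For each fixed $t\in[0,T]$ the sequence $\{g_n(t)\}$ is bounded in $Z$ by the sup bound from the first step, so compactness of $Z\hookrightarrow\hookrightarrow Y$ makes $\{g_n(t)\}$ relatively compact in $Y$.

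Finally, the classical Arzelà--Ascoli theorem for $Y$-valued continuous functions on the compact interval $[0,T]$ yields a subsequence of $\{g_n\}$ converging uniformly in $Y$, i.e.\ in $C([0,T];Y)$. This establishes the asserted compact embedding. The main technical obstacle is the first step: the Morrey-type estimate controlling the $Z$-modulus of continuity of $g$ by its Sobolev--Slobodeckij seminorm, which is where the hypothesis $\gamma>1/2$ is essential — the weight $|t-\tau|^{-(1+2\gamma)/2}$ must be $L^2$-integrable near the singularity after the kernel manipulation, and this forces precisely $\gamma>1/2$. Everything else is a routine application of compactness tools.
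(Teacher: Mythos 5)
The paper does not actually prove this lemma; it quotes it from Flandoli--Gatarek, and your overall architecture (a fractional Morrey embedding into $C^{\gamma-1/2}([0,T];Z)$ followed by Arzel\`a--Ascoli, with the compact embedding $Z\subset\subset Y$ supplying pointwise precompactness) is exactly how that cited result is proved. Your second and third steps are correct as written.

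The gap is in the first step, which you rightly call the main technical obstacle but do not actually surmount. Starting from
\[
g(t)-g(s)=\frac{1}{t-s}\int_s^t\bigl(g(t)-g(\tau)\bigr)\,d\tau-\frac{1}{t-s}\int_s^t\bigl(g(s)-g(\tau)\bigr)\,d\tau,
\]
a single application of Cauchy--Schwarz bounds the first term by
\[
C\,|t-s|^{\gamma}\Bigl(\int_0^T\frac{\|g(t)-g(\tau)\|_Z^2}{|t-\tau|^{1+2\gamma}}\,d\tau\Bigr)^{1/2},
\]
and the factor in parentheses is \emph{not} controlled by $\|g\|_{W^{\gamma,2}(0,T;Z)}$ uniformly in $t$: the Sobolev--Slobodeckij seminorm bounds only its integral over $t$, so it is finite for a.e.\ $t$ but can be arbitrarily large at individual times. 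Note also that this computation produces the exponent $\gamma$, not the claimed $\gamma-1/2$; the loss of $1/2$ is exactly the price of converting $L^2_t$-control of that factor into an $L^\infty_t$ statement, and that conversion is the nontrivial content of the embedding. The standard repair is to telescope the averages over dyadic intervals $I_k=[t-2^{-k}\delta,\,t]$: comparing $\langle g\rangle_{I_k}$ with $\langle g\rangle_{I_{k+1}}$ via Cauchy--Schwarz on a genuine double integral costs $C(2^{-k}\delta)^{\gamma-1/2}[g]_{W^{\gamma,2}}$ per step, and the resulting geometric series converges precisely because $\gamma>1/2$ (equivalently, one invokes the Garsia--Rodemich--Rumsey lemma). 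Your closing heuristic for where $\gamma>1/2$ enters --- square-integrability of $|t-\tau|^{-(1+2\gamma)/2}$ near the singularity --- is also not right, since that function is never locally square-integrable for $\gamma>0$; the hypothesis enters through the convergence of the dyadic sum.
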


\subsubsection{Young measures, concentration defect measures}
\label{ym}
In this subsection, we first briefly recall the notion of Young measures and related results which have been used frequently in the text. For an excellent overview of applications of the Young measure theory to hyperbolic conservation laws, we refer to Balder \cite{Balder}. Let us begin by assuming that $(Z, \mathrm{M}, \mu)$ is a sigma finite measure space. A Young measure from $Z$ into $\R^M$ is a weakly measurable function $\mathcal{V}: Z \rightarrow \mathcal{P}(\R^M)$ in the sense that $x \rightarrow \mathcal{V}_{x}(A)$ is $\mathrm{M}$-measurable for every Borel set $A$ in $\R^M$. In what follows, we make use of the following generalization of the classical result on Young measures; for details, see \cite[Section~2.8]{BrFeHobook}.
\begin{Lemma}\label{lem001}
Let $N,M\in\mathbb{N}$, $\mathcal{Q} \subset \R^N\times (0,T) $ and let $({\bf W}_n)_{n \in \N}$, ${\bf W}_n: \Omega\times\mathcal{Q}  \to \R^M$,  be a sequence of random variables such that
$$
\E \big[ \|{\bf W}_n \|^p_{L^p(\mathcal{Q})}\big] \le C, \,\, \text{for a certain}\,\, p\in(1,\infty).
$$
Then on the standard probability space $\big([0,1], \overline{\mathcal{B}[0,1]}, \mathcal{L} \big)$, there exists a new subsequence $(\widetilde {\bf W}_n)_{n \in \N}$ (not relabeled), and a parametrized family ${\lbrace \mathcal{\widetilde V}^{\omega}_{y} \rbrace}_{y \in \mathcal{Q}}$ (superscript $\omega$ emphasises the dependence on $\omega$) of random probability measures on $\R^M$, regarded as a random variable taking values in $\big(L_{w^*}^{\infty}(\mathcal{Q}; \mathcal{P}(\R^M)), w^* \big)$, such that ${\bf W}_n$ has the same law as $\widetilde {\bf W}_n$, i.e. $ {\bf W}_n \sim_{d} \widetilde {\bf W}_n,$
and  the following property holds: for any Carath\'eodory function $J=J(y, Z), y \in \mathcal{Q}, Z \in \R^M$, such that
$$
|J(y,Z)| \le C(1 + |Z|^q), \quad 1 \le q < p, \,\,\text{uniformly in}\,\,y,
$$
implies $\mathcal{L}$-a.s.,
$$
J(\cdot, \widetilde {\bf W}_n) \rightharpoonup \overline{J}\,\,\text{in}\,\, L^{p/q}(\mathcal{Q}), 
\,\, \text{where}\,\, \overline{J}(y) = \langle \mathcal{\widetilde V}^{\omega}_{(\cdot)}; J(y,\cdot)\rangle := \int_{\R^M} J(y, z)\,\D \mathcal{\widetilde V}^{\omega}_{y}(z), \,\,\text{for a.a.}\,\, y \in \mathcal{Q}.
$$
\end{Lemma}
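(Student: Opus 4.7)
The plan is to view each $\mathbf{W}_n$ as inducing a random Young measure $\omega \mapsto \delta_{\mathbf{W}_n(\omega,\cdot)}$ taking values in the space $\mathcal{Y} := (L^\infty_{w^*}(\mathcal{Q}; \mathcal{P}(\mathbb{R}^M)), w^*)$, to establish tightness of the associated laws on $\mathcal{Y}$, and then to transfer the resulting weak convergence to the standard probability space via a Jakubowski-type Skorokhod representation theorem.

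First, I would equip $\mathcal{Y}$ with a sub-Polish structure: its weak-$*$ topology is generated by the countable family of continuous evaluation functionals $\mathcal{V}\mapsto \int_\mathcal{Q} \varphi(y)\langle \mathcal{V}_y;\psi\rangle\,dy$ indexed by $\varphi,\psi$ drawn from countable dense subsets of $C_c(\mathcal{Q})$ and $C_0(\mathbb{R}^M)$. This family separates points and makes $\omega\mapsto \delta_{\mathbf{W}_n(\omega,\cdot)}$ a Borel-measurable random variable, because each such evaluation reduces via Fubini to $\omega\mapsto \int_\mathcal{Q} \varphi(y)\psi(\mathbf{W}_n(\omega,y))\,dy$, which is measurable by the measurability of $\mathbf{W}_n$.

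Second, I would establish tightness of $\{\mathrm{Law}(\mathbf{W}_n,\delta_{\mathbf{W}_n})\}_n$ on the product space $(L^p(\mathcal{Q}),w)\times \mathcal{Y}$. Chebyshev applied to $\mathbb{E}\|\mathbf{W}_n\|^p_{L^p(\mathcal{Q})}\leq C$ confines $\mathbf{W}_n$ to balls in $L^p(\mathcal{Q})$, which are weakly compact; the same moment bound yields $\int_\mathcal{Q}\langle \delta_{\mathbf{W}_n(\omega,\cdot)};|z|^p\rangle\,dy\leq R^p$ with high probability, and sublevel sets of this moment functional are relatively compact in $\mathcal{Y}$ by the standard Young-measure compactness criterion. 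Jakubowski's theorem for sub-Polish targets then delivers a subsequence, new variables $(\widetilde{\mathbf{W}}_n,\widetilde{\mathcal{V}}_n)$ defined on the standard probability space $([0,1],\overline{\mathcal{B}[0,1]},\mathcal{L})$, and a limit $(\widetilde{\mathbf{W}},\widetilde{\mathcal{V}})$, with $(\widetilde{\mathbf{W}}_n,\widetilde{\mathcal{V}}_n)\sim_d (\mathbf{W}_n,\delta_{\mathbf{W}_n})$ and a.s. convergence in the product topology. Equality of laws forces $\widetilde{\mathcal{V}}_n=\delta_{\widetilde{\mathbf{W}}_n}$ $\mathcal{L}$-a.s., and in particular $\widetilde{\mathbf{W}}_n\sim_d \mathbf{W}_n$ as required.

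Third, I would upgrade the a.s. weak-$*$ convergence $\delta_{\widetilde{\mathbf{W}}_n}\to\widetilde{\mathcal{V}}$, which initially holds only when tested against products $\varphi(y)\psi(z)$ with $\psi\in C_0(\mathbb{R}^M)$, to Carath\'eodory functions $J$ of polynomial growth $|J(y,z)|\leq C(1+|z|^q)$ with $1\leq q<p$. Fatou transports the moment bound to $\widetilde{\mathbf{W}}_n$ and $\widetilde{\mathcal{V}}$, so $\{|J(\cdot,\widetilde{\mathbf{W}}_n)|^{p/q}\}_n$ is uniformly integrable on $\mathcal{Q}$; truncating $J$ at height $k$, applying the weak-$*$ convergence on the bounded part, and letting $k\to\infty$ through a Vitali / de la Vall\'ee--Poussin argument yields the claimed weak convergence in $L^{p/q}(\mathcal{Q})$ almost surely, with limit $\overline{J}(y)=\langle \widetilde{\mathcal{V}}^\omega_y;J(y,\cdot)\rangle$.

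The main obstacle I anticipate lies in the first topological step: verifying that $\mathcal{Y}$ genuinely admits a sub-Polish structure for which the moment sublevel sets are compact and Jakubowski's theorem is applicable, and checking Borel measurability of $\omega\mapsto \delta_{\mathbf{W}_n(\omega,\cdot)}$. Once this framework is in place — together with the standard probability space being the universal target of any Borel probability space — the remaining pieces (Prokhorov-type tightness, Skorokhod representation, identification of $\widetilde{\mathcal{V}}_n=\delta_{\widetilde{\mathbf{W}}_n}$, and the uniform-integrability upgrade) assemble in a routine fashion.
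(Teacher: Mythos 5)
The paper does not prove Lemma~\ref{lem001} itself; it quotes it from the monograph of Breit, Feireisl and Hofmanov\'a (\cite[Section~2.8]{BrFeHobook}), and your argument --- encoding $\mathbf{W}_n$ as the random Young measure $\delta_{\mathbf{W}_n}$, proving tightness on the quasi-Polish product $(L^p(\mathcal{Q}),w)\times\big(L^\infty_{w^*}(\mathcal{Q};\mathcal{P}(\R^M)),w^*\big)$, invoking the Jakubowski--Skorokhod theorem, identifying $\widetilde{\mathcal{V}}_n=\delta_{\widetilde{\mathbf{W}}_n}$ by equality of laws, and upgrading to Carath\'eodory integrands of growth $q<p$ by truncation and uniform integrability --- is precisely the strategy of that reference and is correct. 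In particular, carrying the weak-$L^p$ component through the Skorokhod step is exactly the device that yields the $\mathcal{L}$-a.s. bound $\sup_n\|\widetilde{\mathbf{W}}_n\|_{L^p}<\infty$ needed to make the final weak convergence in $L^{p/q}(\mathcal{Q})$ an almost-sure statement.
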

\noindent In literature, Young measure theory has been successfully exploited to extract limits of bounded continuous functions. However, for our purpose, we need to deal with typical functions $F$ for which we only know that 
\begin{equation*}
\E \big[ \|F({\bf W}_n)\|^p_{L^1(\mathcal{Q})}\big] \le C, \,\, \text{for a certain}\,\, p\in(1,\infty), \, \mbox{uniformly in } n.
\end{equation*}
In fact, using a well-known fact that $L^1(\mathcal{Q})$ is embedded in the space of bounded Radon measures $\mathcal{M}_b(\mathcal{Q})$, we can infer that $\p$-a.s.
\begin{align*}
\mbox{weak-* limit in} \, \mathcal{M}_b(\mathcal{Q})\,\, \mbox{of} \, \,F({\bf W}_n) = \langle \mathcal{\widetilde V}^{\omega}_{y}; F\rangle \,dy+ F_{\infty},
\end{align*}
where $F_{\infty} \in \mathcal{M}_b(\mathcal{Q})$, and $F_{\infty}$ is called \emph{concentration defect measure (or concentration Young measure)}. We remark that, a simple truncation analysis and Fatou's lemma reveal that $\p$-a.s. $\| \langle \mathcal{\widetilde V}^{\omega}_{(\cdot)}; F\rangle\|_{L^1(\mathcal{Q})} \leq C$ and thus $\p$-a.s. $\langle \mathcal{\widetilde V}^{\omega}_{y}; F \rangle $ is finite for a.e. $y\in \mathcal{Q}$. In what follows, regarding the concentration defect measure, we shall make use of the following crucial lemma. For a proof of the lemma modulo cosmetic changes, we refer to Feireisl et. al \cite[Lemma 2.1]{Fei01}. 

\begin{Lemma}
\label{lemma001}
Let $\{{\bf W}_n\}_{n > 0}$, ${\bf W}_n: \Omega \times \mathcal{Q} \rightarrow \mathbb{R}^M$ be a sequence generating a Young measure $\{\mathcal{V}^{\omega}_y\}_{y\in \mathcal{Q}}$, where $\mathcal{Q}$ is a measurable set in $\mathbb{R}^N \times (0,T)$. Let $G: \mathbb{R}^M \rightarrow [0,\infty)$
be a continuous function such that
\begin{equation*}
\sup_{n >0} \E\big[\|G({\bf W}_n)\|^p_{L^1(\mathcal{Q})} \big]< \infty, \, \text{for a certain}\,\, p\in(1,\infty),
\end{equation*}
and let $F$ be continuous such that
\begin{equation*}
F: \mathbb{R}^M \rightarrow \mathbb{R}, \quad |F(\bm{z})|\leq G(\bm{z}), \mbox{ for all } \bm{z}\in \mathbb{R}^M.
\end{equation*}
Let us denote $\p$-a.s.
\begin{equation*}
{F_{\infty}} := {\widetilde{F}}- \langle \mathcal{\widetilde V}^{\omega}_{y}, F(\textbf{v}) \rangle \,dy, \quad 
{G_{\infty}} := {\widetilde{G}}- \langle \mathcal{\widetilde V}^{\omega}_{y}, G(\textbf{v}) \rangle \,dy.
\end{equation*}
Here ${\widetilde{F}}, {\widetilde{G}} \in \mathcal{M}_b(\mathcal{Q})$ are weak-$*$ limits of $\{F({\bf W}^n)\}_{n > 0}$, $\{G({\bf W}^n)\}_{n > 0}$ respectively in $\mathcal{M}_b(\mathcal{Q})$. Then $\p$-almost surely $|F_{\infty}| \leq G_{\infty}$.
\end{Lemma}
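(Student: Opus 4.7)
The plan is to argue by truncation, reducing the claim to bounded continuous integrands---where the Young measure identification from the previous lemma leaves no singular contribution---and then passing to the limit in the truncation parameter. First I would pick a radially symmetric cutoff $\chi_k \in C_c(\mathbb{R}^M;[0,1])$ with $\chi_k\equiv 1$ on $\{|\bm{z}|\leq k\}$ and $\chi_k\equiv 0$ on $\{|\bm{z}|\geq k+1\}$, and set $F_k:=F\chi_k$, $G_k:=G\chi_k$. These are bounded continuous, and because $\chi_k\leq 1$ and $|F|\leq G$ one has the pointwise bounds
\[
|F_k|\leq G_k\leq G, \qquad |F-F_k|=|F|(1-\chi_k)\leq G(1-\chi_k)=G-G_k.
\]

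Next, since $F_k$ and $G_k$ are bounded and continuous, Lemma~\ref{lem001} (applied with any exponent $q\in[1,p)$) guarantees $\p$-a.s.\ that
\[
F_k(\widetilde{\bf W}_n)\rightharpoonup \langle \mathcal{\widetilde V}^\omega_y,F_k\rangle, \qquad G_k(\widetilde{\bf W}_n)\rightharpoonup \langle \mathcal{\widetilde V}^\omega_y,G_k\rangle
\]
weakly in $L^r(\mathcal{Q})$ for every finite $r$; in particular the corresponding weak-$*$ limits in $\mathcal{M}_b(\mathcal{Q})$ are purely absolutely continuous, so no concentration defect arises from the truncated parts. Combining this with the pointwise domination, for every non-negative $\psi\in C(\overline{\mathcal{Q}})$ one has
\[
\left|\int_{\mathcal{Q}} (F-F_k)(\widetilde{\bf W}_n)\,\psi\,dy\right|
\leq \int_{\mathcal{Q}} (G-G_k)(\widetilde{\bf W}_n)\,\psi\,dy,
\]
and sending $n\to\infty$ (using the weak-$*$ convergences $F(\widetilde{\bf W}_n)\rightharpoonup^* \widetilde F$, $G(\widetilde{\bf W}_n)\rightharpoonup^* \widetilde G$ in $\mathcal{M}_b(\mathcal{Q})$) yields, $\p$-a.s.,
\[
\left|\int_{\mathcal{Q}}\psi\,d\widetilde F - \int_{\mathcal{Q}} \langle \mathcal{\widetilde V}^\omega_y,F_k\rangle\,\psi\,dy\right|
\leq \int_{\mathcal{Q}}\psi\,d\widetilde G - \int_{\mathcal{Q}}\langle \mathcal{\widetilde V}^\omega_y,G_k\rangle\,\psi\,dy.
\]

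Finally I would send $k\to\infty$. On the fibre space, $F_k\to F$ and $G_k\uparrow G$ pointwise with $|F_k|\leq G$; since $\p$-a.s.\ $\langle \mathcal{\widetilde V}^\omega_y,G\rangle<\infty$ for a.e.\ $y$ and belongs to $L^1(\mathcal{Q})$ (as remarked just before the statement), dominated and monotone convergence inside the Young measure integrals give
\[
\langle \mathcal{\widetilde V}^\omega_y,F_k\rangle\to \langle \mathcal{\widetilde V}^\omega_y,F\rangle, \qquad \langle \mathcal{\widetilde V}^\omega_y,G_k\rangle\to \langle \mathcal{\widetilde V}^\omega_y,G\rangle \quad \text{for a.e.\ } y\in\mathcal{Q},
\]
dominated by $\langle \mathcal{\widetilde V}^\omega_y,G\rangle\in L^1(\mathcal{Q})$. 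Integrating against $\psi$, invoking dominated convergence a second time, and substituting the definitions of $F_\infty$ and $G_\infty$ produces, $\p$-a.s.,
\[
\left|\int_{\mathcal{Q}}\psi\,dF_\infty\right|\leq \int_{\mathcal{Q}}\psi\,dG_\infty \qquad \text{for every non-negative } \psi\in C(\overline{\mathcal{Q}}),
\]
which is precisely the claimed inequality $|F_\infty|\leq G_\infty$ of Borel measures on $\mathcal{Q}$.

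The main obstacle I anticipate is the first of the two limit passages: making sure the bounded truncated sequences $F_k(\widetilde{\bf W}_n)$ and $G_k(\widetilde{\bf W}_n)$ genuinely weak-$*$ converge in $\mathcal{M}_b(\mathcal{Q})$ to the \emph{absolutely continuous} measures $\langle \mathcal{\widetilde V}^\omega_y,F_k\rangle\,dy$ and $\langle \mathcal{\widetilde V}^\omega_y,G_k\rangle\,dy$ with no residual singular part, and then safely commuting the limits $n\to\infty$ and $k\to\infty$. The uniform moment bound $\E[\|G({\bf W}_n)\|_{L^1(\mathcal{Q})}^p]\leq C$ with $p>1$ is the key analytic ingredient: it delivers the equi-integrability that rules out concentration at the bounded-truncation level and, via $\langle \mathcal{\widetilde V}^\omega_y,G\rangle\in L^1(\mathcal{Q})$ a.s., furnishes the dominating function that legalises the final passage to the limit.
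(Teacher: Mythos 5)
Your truncation argument is correct and is essentially the proof the paper delegates to the cited reference \cite{Fei01}: cut off $F$ and $G$, use that bounded continuous compositions of a Young-measure-generating sequence converge weakly with no concentration defect, pass to the limit in $n$ via the pointwise domination $|F-F_k|\le G-G_k$ tested against non-negative $\psi$, and then remove the truncation by monotone/dominated convergence against the integrable majorant $\langle\mathcal{\widetilde V}^{\omega}_{y},G\rangle$. The only cosmetic quibble is your attribution of the convergence of $F_k(\widetilde{\bf W}_n)$ and $G_k(\widetilde{\bf W}_n)$ to Lemma~\ref{lem001}, whose moment hypothesis on ${\bf W}_n$ is not assumed here; that step follows instead directly from the standing assumption that the sequence generates the Young measure, which is exactly the statement for bounded Carath\'eodory integrands.
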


\subsubsection{Convergence of arithmetic averages}
Following Feireisl et. al. \cite{FLM1}, we also show that the
arithmetic averages of numerical solutions converge pointwise to a generalized dissipative solution of the compressible Euler system, as introduced in Hofmanova et. al. \cite{MKS01}. To that context, we have the following result.

\begin{Proposition}\label{kconvergence}
\label{prop1}
Let $(X,\mathcal{A},\mu)$ be a finite measure space, and $\vU_n\rightharpoonup\vU$ weakly in $L^1(X;\R^M)$. Then there exists a subsequence $(\vU_{n_k})_{k\ge\,1}$ of sequence $(\vU_n)_{n\,\ge1}$ such that
\begin{align*}
\frac {1} {n} \sum_{k=1}^n \vU_{n_k} \to \vU, \quad \mbox{  a.e. in } {X}.
\end{align*}
\end{Proposition}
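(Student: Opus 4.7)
The proposition is essentially a version of Komlós's theorem adapted to the weakly convergent setting, with the conclusion weakened from "every further subsequence works" to "some subsequence works." The plan is to reduce to the case $\vU=0$ and then carry out a truncation plus diagonalization argument, combining the Banach--Saks theorem in $L^2$ with the Dunford--Pettis characterization of weak $L^1$-compactness.

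First, I would write $\vU_n - \vU \rightharpoonup 0$ and so, without loss of generality, assume $\vU = 0$. The Dunford--Pettis theorem then tells me that $\{\vU_n\}$ is uniformly integrable on $(X,\mathcal A,\mu)$ and in particular bounded in $L^1(X;\R^M)$. This uniform integrability will be the key tool that replaces the failure of the Banach--Saks property in $L^1$.

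Next, for each integer $k\ge 1$, I would introduce the radial truncation $T_k(z) = z$ for $|z|\le k$ and $T_k(z)=kz/|z|$ otherwise. For fixed $k$ the sequence $\{T_k(\vU_n)\}_n$ is bounded in $L^\infty(X;\R^M)$, hence in the Hilbert space $L^2(X;\R^M)$, so the Banach--Saks theorem yields a subsequence whose Cesàro averages converge strongly in $L^2$; extracting once more produces a $\mu$-a.e.\ convergent Cesàro average. A standard diagonal procedure in $k$ then delivers a single subsequence $(\vU_{n_j})_{j\ge1}$ such that, for every fixed $k\ge1$,
\[
\frac 1 N \sum_{j=1}^N T_k(\vU_{n_j}) \longrightarrow V^k \quad \mu\text{-a.e.\ as } N\to\infty,
\]
for some $V^k\in L^1(X;\R^M)$.

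Finally, I would pass to the limit $k\to\infty$. Uniform integrability yields
\[
\sup_N \Big\| \frac 1 N\sum_{j=1}^N \vU_{n_j} - \frac 1 N\sum_{j=1}^N T_k(\vU_{n_j})\Big\|_{L^1(X)} \le \sup_j \|\vU_{n_j}-T_k(\vU_{n_j})\|_{L^1(X)} \xrightarrow{k\to\infty} 0,
\]
so $\{V^k\}$ is Cauchy in $L^1$ and its limit $V$ coincides $\mu$-a.e.\ with the Cesàro limit of $\vU_{n_j}$ along a suitable subsequence. To identify $V$ with $0$, note that uniform integrability is preserved under convex combinations, so the Cesàro averages are uniformly integrable; their a.e.\ limit is therefore also their weak $L^1$-limit by Vitali's convergence theorem, and the weak limit must be $0$ since $\vU_{n_j}\rightharpoonup 0$.

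The main obstacle is exactly the fact that the Banach--Saks property fails in $L^1$, which forces the truncation/diagonalization detour and the careful use of uniform integrability. The identification of the a.e.\ limit with the weak limit $\vU$, rather than some spurious $L^1$-function, is the delicate point and relies on Vitali's theorem together with the stability of uniform integrability under averaging.
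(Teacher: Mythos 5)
Your final identification step (uniform integrability of the Ces\`aro averages, Vitali's theorem, uniqueness of the weak limit) is exactly what the paper does. The difference is upstream: the paper simply invokes Koml\'os' theorem for $L^1$-bounded sequences to produce a subsequence whose Ces\`aro means converge a.e., whereas you try to manufacture that subsequence from scratch via truncation, Banach--Saks in $L^2$, and a diagonal argument. That route can in principle be made to work (it is essentially how Koml\'os-type theorems are proved for uniformly integrable sequences), but as written it has genuine gaps at the key steps.

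The main gap: Banach--Saks gives a subsequence whose Ces\`aro averages $S_N=\frac1N\sum_{j\le N}T_k(\vU_{n_j})$ converge \emph{strongly in $L^2$}; ``extracting once more'' then gives $S_{N_i}\to V^k$ a.e. along a subsequence of the \emph{averages}, which is not what you need --- you need $S_N\to V^k$ a.e. for \emph{all} $N$, and a.e. convergence of $S_{N_i}$ neither implies this nor produces the Ces\`aro averages of a further subsequence of the $T_k(\vU_{n_j})$. Repairing it requires the quantitative form of Banach--Saks (an almost-orthogonal selection giving $\|S_N-V^k\|_{L^2}^2\le C/N$), a.e. convergence along $N=i^2$ by Borel--Cantelli, and a filling-in argument between consecutive squares that exploits the bound $|T_k|\le k$. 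A second, related problem is the diagonalization over $k$: a.e. convergence of Ces\`aro means is \emph{not} inherited by further subsequences, so refining the subsequence for $k=2$ may destroy the property already secured for $k=1$ unless the selection is made hereditary --- which is precisely the extra content built into Koml\'os' theorem. Finally, in the limit $k\to\infty$ you control $\sup_N\big\|\frac1N\sum_{j\le N}(\vU_{n_j}-T_k(\vU_{n_j}))\big\|_{L^1}$, which gives $L^1$- but not a.e.-closeness of the untruncated averages to $V^k$; the standard fix is to choose the subsequence so that $\sum_j j^{-1}\int|\vU_{n_j}|\mathbf{1}_{\{|\vU_{n_j}|>j\}}\,d\mu<\infty$ and invoke Kronecker's lemma. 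None of these repairs is needed if, like the paper, you quote Koml\'os' theorem as a black box and keep only your last paragraph.
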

\begin{proof}
Since the sequence $(\vU_n)_{n\,\ge\,1}$ is uniformly bounded in $L^1(X)$, thanks to Koml\'os theorem, there exists a subsequence $(\vU_{n_k})_{k\,\ge\,1}$ and $\tilde{\vU}\in L^1(X)$ such that
\begin{align*}
\frac {1} {n} \sum_{k=1}^n \vU_{n_k} \to \tilde{\vU}, \quad \mbox{  a.e. in } {X}.
\end{align*}

Let us define $\vV_n:=\frac {1} {n} \sum_{k=1}^n \vU_{n_k}$. Since $\vU_{n_k}$ is also converges weakly to $\vU$, it implies that $\vV_n$ converges weakly to $\vU$ in $L^1(X)$. So sequence ${V_n}$ is uniformly integrable in $L^1(X)$. As consequence of Vitali's convergence theorem implies that $\vV_n$ converges to $\tilde\vU$ strongly in $L^1(X)$. Therefore, uniqueness of weak limit implies that $\vU=\tilde\vU$ in $L^1(X)$. This concludes the proof.
\end{proof}

\subsection{Background on Stochastic framework}
\label{stoc}
Here we briefly recapitulate some basics of stochastic calculus in order to 
define the cylindrical Wiener process $W$ and the stochastic integral appearing in \eqref{P1}. To that context, let $(\Omega,\mf,(\mf_t)_{t\geq0},\prst)$ be a stochastic basis with a complete, right-continuous filtration. The stochastic process $W$ is a cylindrical $(\mf_t)$-Wiener process in a separable Hilbert space $\mathfrak{W}$. It is formally given by the expansion
$$W(t)=\sum_{k\geq 1} e_k W_k(t),$$
where $\{ W_k \}_{k \geq 1}$ is a sequence of mutually independent real-valued Brownian motions relative to $(\mf_t)_{t\geq0}$ and $\{e_k\}_{k\geq 1}$ is an orthonormal basis of $\mathfrak{W}$.
To give the precise definition of the diffusion coefficient $\Psi$, consider $\varrho\in L^\gamma(\mt)$, $\varrho\geq0$, and $\bfu\in L^2(\mt)$ such that $\sqrt\varrho\bfu\in L^2(\mt)$. 
Denote $\bf m=\varrho\bf u$ and let $\,\Psi(\varrho,{\bf m}):\mathfrak{W}\rightarrow L^1(\mt)$ be defined as follows
$$\Psi(\varrho,{\bf m})e_k=\Psi_k(\cdot,\varrho(\cdot),{\bf m}(\cdot)).$$
The coefficients $\Psi_{k}:\mt\times\mr\times\mr^3\rightarrow\mr^3$ are $C^1$-functions that satisfy uniformly in $x\in\mt$
\begin{align}
\Psi_k (\cdot, 0 , 0) &= 0 \label{FG1}\\
| \partial_\vr \Psi_k | + |\nabla_{{\bf m}} \Psi_k | &\leq \beta_k, \quad \sum_{k \geq 1} \beta_k  < \infty.
\label{FG2}
\end{align}
As usual, we understand the stochastic integral as a process in the Hilbert space $W^{-m,2}(\mt)$, $m>3/2$. Indeed, it is easy to check that under the above assumptions on $\varrho$ and $\bf m$, the mapping $\Psi(\varrho,\varrho\bf u)$ belongs to $L_2(\mathfrak{W};W^{-m,2}(\mt))$, the space of Hilbert--Schmidt operators from $\mathfrak{W}$ to $W^{-m,2}(\mt)$.
Consequently, if\footnote{Here $\mathcal{P}$ denotes the predictable $\sigma$-algebra associated to $(\mf_t)$.}
\begin{align*}
\varrho&\in L^\gamma(\Omega\times(0,T),\mathcal{P},\dif\prst\otimes\dif t;L^\gamma(\mt)),\\
\sqrt\varrho\bfu&\in L^2(\Omega\times(0,T),\mathcal{P},\dif\prst\otimes\dif t;L^2(\mt)),
\end{align*}
and the mean value $(\varrho(t))_{\mt}$ is essentially bounded
then the stochastic integral
\[
\int_0^t \Psi(\vr, \vr \vu) \ {\rm d} W = \sum_{k \geq 1}\int_0^t \Psi_k (\cdot, \vr, \vr \vu) \ {\rm d} W_k
\]
is a well-defined $(\mf_t)$-martingale taking values in $W^{-m,2}(\mt)$. Note that the continuity equation \eqref{P1} implies that the mean value $(\varrho(t))_{\mt}$ of the density $\varrho$ is constant in time (but in general depends on $\omega$).
Finally, we define the auxiliary space $\mathfrak{W}_0\supset \mathfrak{W}$ via
$$\mathfrak{W}_0:=\bigg\{u=\sum_{k\geq1}\beta_k e_k;\;\sum_{k\geq1}\frac{\beta_k^2}{k^2}<\infty\bigg\},$$
endowed with the norm
$$\|u\|^2_{\mathfrak{W}_0}=\sum_{k\geq1}\frac{\beta_k^2}{k^2},\quad v=\sum_{k\geq1}\beta_k e_k.$$
Note that the embedding $\mathfrak{W}\hookrightarrow \mathfrak{W}_0$ is Hilbert--Schmidt. Moreover, trajectories of $W$ are $\prst$-a.s. in $C([0,T];\mathfrak{W}_0)$.

For the convergence of approximate solutions, it is necessary to secure strong compactness (a.s. convergence) in the $\omega$-variable. For that purpose, we need a version of Skorokhod representation theorem, so-called Skorokhod-Jakubowski representations theorem. Note that classical Skorokhod theorem only works for Polish spaces, but in our analysis path spaces are so-called quasi-Polish spaces. In this paper, we use the following version of the Skorokhod-Jakubowski theorem, taken from Brze\'zniak et.al. \cite{BrzezniakHausenblasRazafimandimby}.

\begin{Theorem}\label{skorokhod}
Let $\mathcal{X}$ be a complete separable metric space and $\mathcal{Y}$
be a topological space such that there is a sequence of continuous functions
$g_n : \mathcal{Y} \rightarrow \R$ that separates points of $\mathcal{Y}$. Let $(\Omega,\mf,(\mf_t)_{t\geq0},\prst)$ be a stochastic basis with a complete, right-continuous filtration and $(\xi_n)_{n \in \N}$ be a tight sequence of random variables in $(\mathcal{Z}, \mathcal{B}(\mathcal{X}) \otimes \mathcal{M})$, where $\mathcal{Z} = \mathcal{X} \times \mathcal{Y}$ and $\mathcal{Z}$ is equipped with the topology induced by the canonical projections $\Pi_1 : \mathcal{Z} \rightarrow \mathcal{X}$ and $\Pi_2 : \mathcal{Z} \rightarrow \mathcal{Y}$. Note that $\mathcal{M}$ is the $\sigma$-algebra generated by the sequence $\xi_n$, ${n \in \N}$.

Assume that there exists a random variable $\eta$ in $\mathcal{X}$ such that 
$\prst^{\Pi_1 \circ \xi_n} = \prst^{\eta}$. Then there exists a subsequence $(\xi_{n_k})_{k \in \N}$ and random variables $\tilde \xi_k, \tilde \xi$ in $\mathcal{Z}$ for $k \in \N$ on a common probability space $(\tilde \Omega, \tilde \mf, \tilde \prst)$ with
\begin{itemize}
\item [(a)] ${\tilde \prst}^{\tilde \xi_k} = \prst^{\xi_{n_k}}$
\item [(b)] $\tilde \xi_k \rightarrow  \tilde \xi$ in $\mathcal{Z}$ almost surely for $k \rightarrow \infty$.
\item [(c)] $\Pi_1 \circ \tilde \xi_k = \Pi_1 \circ \tilde \xi$ almost surely.
\end{itemize}
\end{Theorem}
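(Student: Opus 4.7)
The statement is an enhancement of the classical Jakubowski generalization of Skorokhod's representation theorem: the novel content beyond the standard result is clause (c), which forces the $\mathcal{X}$-marginal of every representative $\tilde{\xi}_k$ to equal the $\mathcal{X}$-marginal of the limit $\tilde{\xi}$ almost surely. The overall strategy is three layered: first embed $\mathcal{Y}$ in a Polish space via the separating sequence $(g_n)$; then invoke the classical Skorokhod theorem to obtain properties (a) and (b) along a subsequence; finally, use regular conditional distributions on the Polish space $\mathcal{X}$ together with a gluing/coupling argument to replace the representatives by random variables sharing a common first coordinate, which yields (c) without destroying (a) or (b).

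\textbf{Step 1 (reduction to a Polish ambient space).} The map $G : \mathcal{Y} \to \mathbb{R}^{\mathbb{N}}$ defined by $G(y) := (g_n(y))_{n \in \mathbb{N}}$ is continuous and injective because the $g_n$ separate points. Hence $(\mathrm{id}_\mathcal{X}, G) : \mathcal{Z} \to \mathcal{X} \times \mathbb{R}^{\mathbb{N}}$ is a continuous injection from $\mathcal{Z}$ into a Polish space. Pushing forward, the laws of $(\Pi_1 \circ \xi_n, G \circ \Pi_2 \circ \xi_n)$ form a tight sequence on $\mathcal{X} \times \mathbb{R}^{\mathbb{N}}$ (continuous images of tight families are tight, and this is the crucial place where tightness of $\xi_n$ in the original topology is used).

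\textbf{Step 2 (classical Skorokhod to obtain (a) and (b)).} On the Polish space $\mathcal{X} \times \mathbb{R}^{\mathbb{N}}$, apply the classical Skorokhod representation: along a subsequence $n_k$, build $(\hat{\Omega}, \hat{\mathscr{F}}, \hat{\mathbb{P}})$ carrying random variables $\hat{\xi}_k, \hat{\xi}$ with $\hat{\mathbb{P}}^{\hat{\xi}_k} = \mathbb{P}^{\xi_{n_k}}$ and $\hat{\xi}_k \to \hat{\xi}$ $\hat{\mathbb{P}}$-a.s. in $\mathcal{X} \times \mathbb{R}^{\mathbb{N}}$. A standard tightness-support argument shows that $\Pi_2 \circ \hat{\xi}$ takes values in $G(\mathcal{Y})$ $\hat{\mathbb{P}}$-a.s., so pulling back by $G^{-1}$ produces bona fide $\mathcal{Z}$-valued variables and yields (a) together with (b). In particular $\Pi_1 \circ \hat{\xi}_k \to \Pi_1 \circ \hat{\xi}$ a.s.\ in $\mathcal{X}$, and since the law of each $\Pi_1 \circ \hat{\xi}_k$ equals $\mathbb{P}^{\eta}$, so does the law of $\Pi_1 \circ \hat{\xi}$.

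\textbf{Step 3 (enforcing (c) via disintegration).} This is the main obstacle and the novel part of the argument. The trick is that $\mathcal{X}$ is Polish, so regular conditional distributions exist. For each $k$, let $\kappa_k : \mathcal{X} \times \mathscr{B}(\mathcal{Y}) \to [0,1]$ be a regular conditional law of $\Pi_2 \circ \xi_{n_k}$ given $\Pi_1 \circ \xi_{n_k}$; let $\kappa$ be the analogous kernel for the limit $\hat{\xi}$. Enlarge $(\hat{\Omega}, \hat{\mathscr{F}}, \hat{\mathbb{P}})$ by a suitable Lebesgue factor to carry an independent $U \sim \mathrm{Unif}(0,1)^{\mathbb{N}}$, pick a single random variable $\tilde{X} := \Pi_1 \circ \hat{\xi}$ (whose law is $\mathbb{P}^{\eta}$), and use measurable selection of the kernels $\kappa_k(\tilde{X}, \cdot)$ and $\kappa(\tilde{X}, \cdot)$ through $U$ to construct variables $\tilde{Y}_k$ and $\tilde{Y}$ with the correct conditional laws. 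Define $\tilde{\xi}_k := (\tilde{X}, \tilde{Y}_k)$ and $\tilde{\xi} := (\tilde{X}, \tilde{Y})$. By construction $\Pi_1 \circ \tilde{\xi}_k = \tilde{X} = \Pi_1 \circ \tilde{\xi}$ a.s., giving (c), while the marginal laws $\tilde{\mathbb{P}}^{\tilde{\xi}_k} = \mathbb{P}^{\xi_{n_k}}$ are preserved, giving (a). The delicate point is that the selection through the kernels must be done in a way that preserves the a.s.\ convergence inherited from Step 2; this can be achieved by choosing the selection maps so that conditional convergence $\kappa_k(x,\cdot) \to \kappa(x,\cdot)$ for a.e.\ $x$ (itself deduced from the joint a.s.\ convergence by a disintegration of the Skorokhod limit) transfers to a.s.\ convergence of the constructed $\tilde{Y}_k \to \tilde{Y}$, which recovers (b).

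\textbf{Main obstacle.} Steps 1 and 2 are essentially Jakubowski's original argument, which is well documented. The real work is Step 3: ensuring that the conditional re-sampling used to pin $\Pi_1 \circ \tilde{\xi}_k$ to the common value $\tilde{X}$ does not destroy the a.s.\ convergence. Handling this requires careful use of measurable selection for kernels into a quasi-Polish space $\mathcal{Y}$ and the fact that $\mathcal{Y}$ is homeomorphically embedded in the Polish space $\mathbb{R}^{\mathbb{N}}$ via $G$, so that standard disintegration results can be applied to the lifted problem and then transported back.
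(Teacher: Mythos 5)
You should first be aware that the paper contains no proof of this statement: Theorem~\ref{skorokhod} is imported verbatim as a ready-made tool from the reference \cite{BrzezniakHausenblasRazafimandimby} (``we use the following version of the Skorokhod-Jakubowski theorem, taken from Brze\'zniak et.al.''), so there is no in-paper argument to compare yours with. On its own terms, your Steps 1 and 2 correctly reproduce Jakubowski's scheme: embed $\mathcal{Y}$ into $\R^{\mathbb{N}}$ via the separating functions, use tightness to restrict to a $\sigma$-compact set on which the embedding is a homeomorphism, apply the classical Polish-space Skorokhod theorem, and pull back. This yields (a) and (b) along a subsequence and is unobjectionable.

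The gap is in Step 3, and it is not a technicality that ``careful measurable selection'' will repair. Your argument hinges on the claim that the conditional kernels converge, $\kappa_k(x,\cdot)\to\kappa(x,\cdot)$ for a.e.\ $x$, and that this can be ``deduced from the joint a.s.\ convergence by a disintegration of the Skorokhod limit.'' Conditional laws are not stable under weak convergence of the joint laws, and the a.s.\ convergence obtained in Step 2 does not help here because in that representation the first coordinates $\Pi_1\circ\hat{\xi}_k$ merely converge to $\Pi_1\circ\hat{\xi}$; they are not equal to it, and the whole difficulty of (c) is precisely to remove that slack. Concretely, take $\mathcal{X}=[0,1]$, $\mathcal{Y}=\R$, $X\sim \mathrm{Unif}[0,1]$ and $\xi_n=(X,d_n(X))$ with $d_n(x)$ the $n$-th binary digit of $x$. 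This sequence is tight, every first marginal equals $\mathrm{Unif}[0,1]$, and the joint laws converge weakly to $\mathrm{Unif}[0,1]\otimes\mathrm{Ber}(1/2)$; yet $\kappa_n(x,\cdot)=\delta_{d_n(x)}$ fails to converge for a.e.\ $x$ along any subsequence. Worse, any random vector with the law of $(X,d_{n_k}(X))$ is supported on the graph of $d_{n_k}$, so (a) together with (c) forces $\tilde{Y}_k=d_{n_k}(\tilde{X})$ a.s.; since the binary digits of a uniform variable are i.i.d.\ $\mathrm{Ber}(1/2)$, the sequence $\tilde{Y}_k$ then cannot converge a.s.\ along any subsequence, so (b) fails. (By contrast, the classical representation of Step 2 succeeds here by letting $\hat{X}_k$ differ from $\hat{X}$ in its $n_k$-th digit.) This shows that your re-sampling construction cannot preserve (b), and indeed that conclusions (a)--(c) cannot be extracted from the stated hypotheses (tightness plus equality of the first-marginal laws) alone; any complete proof must invoke additional structure beyond what Step 3 uses. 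As written, the proposal does not close.
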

%

Finally, we mention the ``Kolmogorov test''  for the existence of continuous modifications of real-valued stochastic processes.

\begin{Lemma}
\label{lemma01}
Let $X={\lbrace X(t) \rbrace}_{t \in [0,T]} $ be a real-valued stochastic process defined on a probability space $(\Omega,\mf,(\mf_t)_{t\geq0},\prst)$. Suppose that there are constants $a >1, b >0$, and $C>0$ such that for all $s,t \in [0,T]$,
\begin{align*}
\E[|X(t)-X(s)|^a] \le C |t-s|^{1 + b}.
\end{align*}
Then there exists a continuous modification of $X$ and the paths of $X$ are $c$-H\"{o}lder continuous for every $c \in [0, \frac{b}{a})$.
\end{Lemma}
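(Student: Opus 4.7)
The plan is to follow the classical Kolmogorov--Chentsov dyadic chaining argument. First, Chebyshev's inequality applied to the given moment bound yields the tail estimate $\p(|X(t)-X(s)|\ge\lambda)\le C|t-s|^{1+b}\lambda^{-a}$ for every $\lambda>0$ and $s,t\in[0,T]$. Fix any target H\"older exponent $c\in[0,b/a)$ and introduce the dyadic grid $D_n:=\{kT2^{-n}:0\le k\le 2^n\}$ together with the maximal increment
\[
K_n:=\max_{0\le k<2^n}\bigl|X((k+1)T2^{-n})-X(kT2^{-n})\bigr|.
\]
A union bound and the tail estimate with $\lambda=2^{-nc}$ give
\[
\p(K_n\ge 2^{-nc})\le 2^n\cdot C(T2^{-n})^{1+b}\cdot 2^{nca}=C_T\,2^{-n(b-ca)}.
\]
Since $b-ca>0$, the Borel--Cantelli lemma furnishes a $\p$-a.s.\ finite integer $N(\omega)$ such that $K_n(\omega)\le 2^{-nc}$ for every $n\ge N(\omega)$.

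Next I would run the standard chaining step on $D:=\bigcup_n D_n$. For any two dyadic points $s,t\in D$ with $0<|s-t|<2^{-N(\omega)}$ one chooses $N_0$ with $2^{-N_0-1}\le|s-t|<2^{-N_0}$ and connects $s$ to $t$ by a telescoping path through adjacent points at successively finer dyadic levels $n\ge N_0$. Summing at most two increments at each level and using the geometric series $\sum_{n\ge N_0}2^{-nc}\lesssim 2^{-N_0 c}$ produces the pointwise H\"older bound
\[
|X(s)-X(t)|\le C(\omega)\,|s-t|^c\qquad\text{for all }s,t\in D,\ |s-t|<\delta(\omega),
\]
with an a.s.\ finite random constant $C(\omega)$. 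Thus $X|_D$ is $\p$-a.s.\ $c$-H\"older continuous on $[0,T]$.

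Finally, I would define the modification $\widetilde X$ on all of $[0,T]$ by $\widetilde X(t):=\lim_{k\to\infty}X(t_k)$ along any dyadic sequence $t_k\to t$; the H\"older estimate on $D$ ensures the limit exists and is independent of the approximating sequence, and the same estimate passes to $\widetilde X$, giving $c$-H\"older sample paths. To check that $\widetilde X$ is a modification of $X$, fix $t\in[0,T]$ and pick dyadic $t_k\to t$: the hypothesis $\E|X(t)-X(t_k)|^a\le C|t-t_k|^{1+b}\to 0$ yields $X(t_k)\to X(t)$ in probability, while $X(t_k)\to\widetilde X(t)$ almost surely, so $\widetilde X(t)=X(t)$ $\p$-a.s.

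This is a completely classical result, so the only real ``work'' is the bookkeeping in the dyadic chaining step; an alternative would simply be to cite Da Prato--Zabczyk or Kallenberg. I do not expect any genuine obstacles beyond carefully tracking the exponents $b-ca>0$ through the Borel--Cantelli step to ensure the claimed range $c\in[0,b/a)$ is attained.
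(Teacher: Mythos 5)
Your proposal is correct: it is the standard Kolmogorov--Chentsov dyadic chaining argument (Chebyshev tail bound, union bound over dyadic increments, Borel--Cantelli, telescoping/chaining, extension by uniform continuity, and identification as a modification via convergence in probability), and the exponent bookkeeping $b-ca>0$ is exactly what delivers the claimed range $c\in[0,b/a)$. The paper itself offers no proof of this lemma --- it is stated as the classical ``Kolmogorov test'' and implicitly cited from the standard literature --- so there is nothing to compare against; your argument is the canonical one. The only step you gloss over is the passage from the local H\"older bound (valid for $|s-t|<\delta(\omega)$) to the global bound on all of $[0,T]$, which requires chaining through at most $O(T/\delta(\omega))$ intermediate points; this is routine and does not affect correctness.
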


\subsection{Stochastic compressible Euler equations}

Since we aim at proving pointwise convergence of numerical solutions to the regular solution of the limit system, using the weak (measure-valued)--strong uniqueness principle for dissipative measure-valued solutions, we first recall the notion of local strong pathwise solution for stochastic compressible Euler equations, being first introduced in \cite{BrMe}. Such a solution is strong in both the probabilistic and PDE sense, at least locally in time. To be more precise, system \eqref{P1}--\eqref{P2} will be satisfied pointwise (not only in the sense of distributions) on the given stochastic basis associated to the cylindrical Wiener process $W$.

\begin{Definition}[Local strong pathwise solution] \label{def:strsol}
	
Let $\StoB$ be a stochastic basis with a complete right-continuous filtration. Let ${W}$ be an $(\mf_t) $-cylindrical Wiener process and $(\varrho_0,\bfv_0)$ be a $W^{m,2}(\T^3)\times W^{m,2}(\T^3)$-valued $\mf_0$-measurable random variable, for some $m>7/2$, and let $\Psi$ satisfy \eqref{FG1} and \eqref{FG2}.
A triplet
$(\varrho,\vv,\mathfrak{t})$ is called a local strong pathwise solution to the system \eqref{P1}--\eqref{P2} provided
\begin{enumerate}
\item $\mathfrak{t}$ is an a.s. strictly positive  $(\mf_t)$-stopping time;
\item the density $\varrho$ is a $W^{m,2}(\mt)$-valued $(\mf_t)$-progressively measurable process satisfying
$$\varrho(\cdot\wedge \mathfrak{t})  > 0,\ \varrho(\cdot\wedge \mathfrak{t}) \in C([0,T]; W^{m,2}(\mt)) \quad \mathbb{P}\text{-a.s.};$$
\item the velocity $\vv$ is a $W^{m,2}(\mt)$-valued $(\mf_t)$-progressively measurable process satisfying
$$ \vv(\cdot\wedge \mathfrak{t}) \in   C([0,T]; W^{m,2}(\mt))\quad \mathbb{P}\text{-a.s.};$$
\item  there holds $\prst$-a.s.
\[
\begin{split}
\varrho (t\wedge \mathfrak{t}) &= \varrho_0 -  \int_0^{t \wedge \mathfrak{t}} \Div(\varrho\vv ) \ \dif s, \\
(\varrho \vv) (t \wedge \mathfrak{t})  &= \varrho_0 \vv_0 - \int_0^{t \wedge \mathfrak{t}} \Div (\varrho\vv \otimes\vv ) \ \dif s 
- \int_0^{t \wedge \mathfrak{t}}a\Grad \varrho^\gamma \ \dif s + \int_0^{t \wedge \mathfrak{t}} {\Psi}(\varrho,\varrho\vv ) \ \D W,
\end{split}
\]
for all $t\in[0,T]$.
\end{enumerate}
\end{Definition}

Note that classical solutions require spatial derivatives of $\vv$ and $\vr$ to be continuous $\prst$-a.s. This motivates the following definition.

\begin{Definition}[Maximal strong pathwise solution]\label{def:maxsol}
	Fix a stochastic basis with a cylindrical Wiener process and an initial condition as in Definition \ref{def:strsol}. A quadruplet $$(\varrho,\vv,(\mathfrak{t}_R)_{R\in\mn},\mathfrak{t})$$ is a maximal strong pathwise solution to system \eqref{P1}--\eqref{P2} provided
	
	\begin{enumerate}
		\item $\mathfrak{t}$ is an a.s. strictly positive $(\mf_t)$-stopping time;
		\item $(\mathfrak{t}_R)_{R\in\mn}$ is an increasing sequence of $(\mf_t)$-stopping times such that
		$\mathfrak{t}_R<\mathfrak{t}$ on the set $[\mathfrak{t}<T]$,
		$\lim_{R\to\infty}\mathfrak{t}_R=\mathfrak t$ a.s. and
		\begin{equation*}
		\sup_{t\in[0,\mathfrak{t}_R]}\|\vv(t)\|_{1,\infty}\geq R\quad \text{on}\quad [\mathfrak{t}<T] ;
		\end{equation*}
		\item each triplet $(\varrho,\vv,\mathfrak{t}_R)$, $R\in\mn$,  is a local strong pathwise solution in the sense  of Definition \ref{def:strsol}.
	\end{enumerate}
\end{Definition}

There are quite a few results available in the literature concerning the existence of maximal pathwise solutions for various SPDE or SDE models, see for instance \cite{BMS,Elw}. For compressible Euler equations, a specific work can be found in Breit $\&$ Mensah in \cite[Theorem 2.4]{BrMe}.

\begin{Theorem}\label{thm:main}
	Let $m>7/2$ and the coefficients
	$\Psi_k$ satisfy hypotheses \eqref{FG1}, \eqref{FG2} and let $(\varrho_0,\bfv_0)$ be an $\mathbb{F}_0$-measurable, $W^{m,2}(\mt)\times W^{m,2}(\mt)$-valued random variable such that $\varrho_0>0$ $\p$-a.s. Then
	there exists a unique maximal strong pathwise solution, in the sense of Definition \ref{def:maxsol}, $(\varrho,\vv,(\mathfrak{t}_R)_{R\in\mn},\mathfrak{t})$ to problem \eqref{P1}--\eqref{P2}
	with the initial condition $(\varrho_0,\vv_0)$.
\end{Theorem}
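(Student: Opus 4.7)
\textbf{Proof Plan for Theorem \ref{thm:main}.} Since the statement is a local well-posedness result for a hyperbolic SPDE in the high-regularity class $W^{m,2}$ with $m>7/2$, my plan is to follow the symmetric-hyperbolic strategy together with a stochastic compactness argument. First I would rewrite \eqref{P1} in symmetric hyperbolic form: introduce the ``sound speed'' variable $c:=\tfrac{2\sqrt{a\gamma}}{\gamma-1}\varrho^{(\gamma-1)/2}$, so that $(c,\vv)$ satisfies a quasilinear symmetric system
\[
\dif c + (\vv\cdot\nabla) c\,\dt + \tfrac{\gamma-1}{2}c\,\di\vv\,\dt =0,\qquad
\dif \vv + (\vv\cdot\nabla)\vv\,\dt + \tfrac{\gamma-1}{2}c\,\nabla c\,\dt = \tilde\Psi(c,\vv)\,\D W,
\]
for which positivity of $\varrho$ (i.e.\ $c>0$) is preserved as long as the solution is smooth, via transport along the flow of $\vv$. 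The noise coefficient $\tilde\Psi$ inherits the structural bounds \eqref{FG1}--\eqref{FG2}.

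Next, I would construct approximations by a Friedrichs-mollifier / Galerkin hybrid combined with a cut-off of the transport coefficients: multiply the convective terms by $\chi_R(\|\vv\|_{1,\infty})$, where $\chi_R$ is a smooth cut-off, producing a globally Lipschitz SDE on a finite-dimensional space. Existence of solutions $(\varrho^{n,R},\vv^{n,R})$ for the truncated, projected system is standard. The central step is to prove $m$-th order energy estimates uniformly in $n$: applying It\^o's formula to $\tfrac12\|\nabla^\alpha c\|^2_{L^2}+\tfrac12\|\nabla^\alpha\vv\|^2_{L^2}$ for $|\alpha|\le m$, performing the symmetric-hyperbolic cancellation of the top-order transport term, and controlling the remainder by Moser-type commutator estimates and Sobolev embeddings (using $m>7/2$ so that $W^{m-1,2}\hookrightarrow W^{1,\infty}$). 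The It\^o correction $\tfrac12\sum_k\|\nabla^\alpha\tilde\Psi_k\|^2_{L^2}$ is bounded by \eqref{FG2}, and the martingale term is handled by Burkholder--Davis--Gundy. This yields, after taking expectation and using a stopping time $\tau_R^n:=\inf\{t:\|\vv^{n,R}(t)\|_{m,2}+\|c^{n,R}(t)\|_{m,2}\ge R\}$, a bound of the form $\E\sup_{[0,T\wedge\tau_R^n]}\bigl(\|\varrho^{n,R}\|^2_{m,2}+\|\vv^{n,R}\|^2_{m,2}\bigr)\le C_R$.

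I would then pass to the limit $n\to\infty$ via Skorokhod--Jakubowski (Theorem \ref{skorokhod}): tightness of the laws on a quasi-Polish path space follows from the $W^{m,2}$-bound plus fractional time-regularity coming from \eqref{FG2} (compare Lemma \ref{comp}), and one obtains a martingale solution on a new probability space. Because the system is locally well-posed (pathwise uniqueness in the symmetric-hyperbolic class follows from an $L^2$ stability estimate between two solutions, again using the symmetric structure and Gronwall), a Gy\"ongy--Krylov argument upgrades this to a pathwise solution on the original basis. Removing the cut-off $\chi_R$ is immediate up to the stopping time $\mathfrak t_R:=\inf\{t:\|\vv(t)\|_{1,\infty}\ge R\}$. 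Finally, the maximal solution is produced by the classical concatenation: set $\mathfrak t:=\sup_R \mathfrak t_R$ and verify the blow-up alternative $\sup_{[0,\mathfrak t_R]}\|\vv\|_{1,\infty}\ge R$ on $[\mathfrak t<T]$ by pathwise uniqueness applied on overlapping intervals.

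The main obstacle is the top-order commutator control in the stochastic setting: one must handle the noise-noise interaction $\tfrac12\sum_k\|\partial^\alpha \Psi_k(\varrho,\varrho\vv)\|^2_{L^2}$ without losing derivatives, which forces the hypotheses \eqref{FG1}--\eqref{FG2} to be used in a quantitative form, and the Skorokhod identification of the stochastic integral for the nonlinear coefficient $\Psi(\varrho,\varrho\vv)$, which requires the Brze\'zniak--Hausenblas--Razafimandimby version of Jakubowski's representation theorem together with a martingale characterization (Bensoussan-type) argument. Positivity $\varrho(\cdot\wedge\mathfrak t)>0$ then follows from the renormalized continuity equation, since $\vv\in L^\infty(0,\mathfrak t_R;W^{1,\infty})$ gives a well-defined flow along which $\varrho$ is transported.
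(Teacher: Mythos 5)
The paper does not prove this theorem at all: it is imported verbatim from the literature, namely Breit \& Mensah \cite[Theorem 2.4]{BrMe}, and the surrounding text only points the reader to that reference (and to \cite{BMS,Elw} for the general maximal-solution machinery). Your blind outline is, in substance, a faithful reconstruction of the strategy used there — high-order $W^{m,2}$ energy estimates for a cut-off/regularized system, Moser commutator bounds exploiting $m>7/2$, stochastic compactness via Jakubowski--Skorokhod, a Gy\"ongy--Krylov argument to return to the original basis, and concatenation over the stopping times $\mathfrak t_R$ to build the maximal solution — so there is nothing to fault at the level of the plan. Two small points worth flagging if you were to execute it: the passage to the sound-speed variable $c$ and the rewritten noise $\tilde\Psi(c,\vv)=\varrho^{-1}\Psi(\varrho,\varrho\vv)$ require the density to stay bounded away from zero, which on the compact torus follows from $\varrho_0>0$ and continuity of the $W^{m,2}$ initial datum but yields only a \emph{random} lower bound that must be tracked through the stopping-time argument; and the blow-up criterion in Definition \ref{def:maxsol} is phrased in terms of $\|\vv\|_{1,\infty}$ alone, so one must verify (as in \cite{BrMe}) that control of $\|\vv\|_{1,\infty}$ indeed prevents the $W^{m,2}$ norms of both $\varrho$ and $\vv$ from blowing up before $\mathfrak t$.
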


\subsection{Measure-valued solutions}

For the introduction of measure-valued solutions, it is convenient to work with the following reformulation of the problem \eqref{P1}--\eqref{P2} in the \emph{conservative} variables $\varrho$ and ${\bf m}= \varrho {\bf u}$:
\begin{align} \label{P11}
\D \vr + \Div \,{\bf m} \dt &= 0,\\ \label{P21}
\D {\bf m} + \left[ \Div \bigg(\frac{{\bf m} \otimes {\bf m}}{\varrho}\bigg)+  \Grad p(\vr) \right]  \dt  &=  \Psi (\vr, {\bf m}) \,\D W.
\end{align}
Note that, in general any uniformly bounded sequence in $L^1(\T^3)$ does not immediately imply weak convergence of it due to the presence of oscillations and concentration effects. To overcome such a problem, two kinds of tools are used:
\begin{itemize}
\item [(a)] Young measures: these are probability measures on the phase space and accounts for the persistence of oscillations in the solution;
\item [(b)] Concentration defect measures: these are measures on physical space-time, accounts for blow up type collapse due to possible concentration points.
\end{itemize}


\subsubsection{Dissipative measure-valued martingale solutions }

Keeping in mind the previous discussion, we now introduce the concept of \textit{dissipative measure--valued martingale solution} to the stochastic compressible Euler system. In what follows, let 
\[
\mathcal{M} = \left\{ [\vr, \vm] \ \Big| \ \vr \geq 0, \ \vm \in \R^3 \right\}
\]
be the phase space associated to the Euler system. 

\begin{Definition}[Dissipative measure-valued martingale solution]
\label{def:dissMartin}
Let $\Lambda$ be a Borel probability measure on $L^\gamma(\mathbb{T}^3)\times L^{\frac{2\gamma}{\gamma+1}}(\mathbb{T}^3)$. Then $\big[ \big(\Omega,\mathbb{F}, (\mathbb{F}_{t})_{t\geq0},\mathbb{P} \big); \mathcal{V}^{\omega}_{t,x}, W \big]$ is a dissipative measure-valued martingale solution of \eqref{P11}--\eqref{P21}, with initial condition $\mathcal{V}^{\omega}_{0,x}$; if
\begin{enumerate}
\item [(01)]$\mathcal{V}^{\omega}$ is a random variable taking values in the space of Young measures on $L^{\infty}_{w^*}\big([0,T] \times \T^3; \mathcal{P}\big(\mathcal{M})\big)$. In other words, $\p$-a.s.
$\mathcal{V}^{\omega}_{t,x}: (t,x) \in [0,T] \times \T^3  \rightarrow \mathcal{P}(\mathcal{M})$ is a parametrized family of probability measures on $\mathcal{M}$,
\item [(02)]$ \big(\Omega,\mathbb{F}, (\mathbb{F}_{t})_{t\geq0},\mathbb{P} \big)$ is a stochastic basis with a complete right-continuous filtration,
\item [(03)]$W$ is a $(\mathbb{F}_{t})$-cylindrical Wiener process,
\item [(04)]the average density $\langle \mathcal{V}^{\omega}_{t,x}; \varrho \rangle$ satisfies $t\mapsto \langle \langle \mathcal{V}^{\omega}_{t,x}; \varrho \rangle(t, \cdot),\varphi\rangle\in C[0,T]$ for any $\varphi\in C^\infty(\mathbb{T}^3)$ $\mathbb{P}$-a.s., the function $t\mapsto \langle \langle \mathcal{V}^{\omega}_{t,x}; \varrho \rangle(t, \cdot),\varphi\rangle$ is progressively measurable and 
\begin{align*}
\mathbb{E}\, \bigg[ \sup_{t\in(0,T)}\Vert  \langle \mathcal{V}^{\omega}_{t,x}; \varrho \rangle(t,\cdot)\Vert_{L^\gamma(\mt)}^p\bigg]<\infty 
\end{align*}
for all $1\leq p<\infty$,
\item [(05)]the average momentum $\langle \mathcal{V}^{\omega}_{t,x}; \textbf{m} \rangle$ satisfies  $t\mapsto \langle \langle \mathcal{V}^{\omega}_{t,x}; \textbf{m} \rangle (t, \cdot),\bm\varphi\rangle\in C[0,T]$ for any $\bm{\varphi}\in C^\infty(\mathbb{T}^3)$ $\mathbb{P}$-a.s., the function $t\mapsto \langle \langle \mathcal{V}^{\omega}_{t,x}; \textbf{m} \rangle (t, \cdot),\bm{\varphi}\rangle$ is progressively measurable and 
\begin{align*}
\mathbb{E}\, \bigg[ \sup_{t\in(0,T)}\Vert  \langle \mathcal{V}^{\omega}_{t,x}; \textbf{m} \rangle (t, \cdot) \Vert_{L^\frac{2\gamma}{\gamma+1}(\mt)}^p\bigg]<\infty 
\end{align*}
for all $1\leq p<\infty$,
\item [(06)]$\Lambda=\mathcal{L}[\mathcal{V}^{\omega}_{0,x}]$,
\item [(07)]the integral identity
\begin{equation} 
\begin{aligned}\label{first condition measure-valued solution}
\int_{\T^3} \langle \mathcal{V}^{\omega}_{\tau,x}; \varrho \rangle \, \varphi  \,dx -\int_{\T^3} \langle \mathcal{V}^{\omega}_{0,x}; \varrho \rangle\, \varphi \,dx 
= \int_{0}^{\tau} \int_{\T^3}  \langle \mathcal{V}^{\omega}_{t,x}; \textbf{m}\rangle \cdot \nabla_x \varphi \,dx \,dt 
\end{aligned}
\end{equation}
holds $\p$-a.s., for all $\tau \in[0,T)$, and for all $\varphi \in C^{\infty}(\T^3)$,
\item [(08)]the integral identity 
\begin{equation} \label{second condition measure-valued solution}
\begin{aligned}
&\int_{\T^3} \langle \mathcal{V}^{\omega}_{\tau,x};\textbf{m}\rangle \cdot \bm{\varphi}dx - \int_{\T^3} \langle \mathcal{V}^{\omega}_{0,x};\textbf{m}\rangle \cdot \bm{\varphi}dx \\
&\qquad = \int_{0}^{\tau} \int_{\T^3} \left[\left\langle \mathcal{V}^{\omega}_{t,x}; \frac{\textbf{m}\otimes \textbf{m}}{\varrho} \right\rangle: \nabla_x \bm{\varphi} + \langle \mathcal{V}^{\omega}_{t,x};p(\varrho)\rangle \divv_x \bm{\varphi} \right] dx dt \\
&\qquad \qquad+ \int_{\T^3} \bm{\varphi}\,\int_0^{\tau} \left\langle \mathcal{V}^{\omega}_{t,x}; {\Psi}(\varrho,\textbf{m} )\right\rangle \, \D W \,dx+ \int_{0}^{\tau} \int_{\T^3}  \nabla_x \bm{\varphi}: d\mu_m,
\end{aligned}
\end{equation}
holds $\p$-a.s., for all $\tau \in [0,T)$, and for all $\bm{\varphi} \in C^{\infty}(\T^3;\mathbb{R}^3)$, where $\mu_m\in L^{\infty}_{w^*}\big([0,T]; \mathcal{M}_b({\T^3} )\big)$, $\p$-a.s., is a tensor--valued measure,
\item [(09)]there exists a real-valued martingale $M_{E}$, such that the following energy inequality
\begin{align} \label{third condition measure-valued solution}
\begin{aligned}
\mathrm{E}(t+) & \leq  \mathrm{E}(s-) \\
&\quad + \frac{1}{2} \int_s^{t} \bigg(\int_{\T^3} \sum_{k = 1}^{\infty} \left\langle \mathcal{V}^{\omega}_{\tau,x};\varrho^{-1}| \Psi_k (\varrho, \textbf{m}) |^2 \right\rangle \bigg)\, d\tau + \frac12\int_s^{t} \int_{\T^3} d \mu_e + \int_s^{t}  dM_{E} 
\end{aligned}			
\end{align}
holds $\p$-a.s., for all $0 \le s <t$ in $(0,T)$ with
\begin{align*}
&\mathrm{E}(t-):=\lim_{\tau \rightarrow 0+} \frac{1}{\tau} \int_{t -\tau}^t \Bigg(\int_{\T^3} \left\langle \mathcal{V}^{\omega}_{s,x}; \frac{1}{2} \frac{|\textbf{m}|^2}{\varrho} +P(\varrho) \right \rangle \,dx + \mathcal{D}(s)\Bigg) \,ds\\
&\mathrm{E}(t+):=\lim_{\tau \rightarrow 0+} \frac{1}{\tau} \int_t^{t +\tau} \Bigg(\int_{\T^3} \left\langle \mathcal{V}^{\omega}_{s,x}; \frac{1}{2} \frac{|\textbf{m}|^2}{\varrho} +P(\varrho) \right \rangle \,dx + \mathcal{D}(s)\Bigg) \,ds 
\end{align*}
Here $P(\varrho):= \frac{\varrho^{\gamma}}{\gamma-1}$, $\mu_e\in L^{\infty}_{w^*}\big([0,T]; \mathcal{M}_b({\T^3} )\big)$, $\p$-a.s., $\mathcal{D}\in L^{\infty}(0,T)$, $\mathcal{D}\geq 0$, $\p$-a.s., with initial energy. 
$$
\mathrm{E}(0-)= 
\int_{\mathbb{T}^3} \bigg(\frac{1}{2} \frac{|\textbf{m}_0|^2}{\varrho_0} +P(\varrho_0) \bigg)\,dx.
$$
\item [(10)]there exists a constant $C>0$ such that
\begin{equation} \label{fourth condition measure-valued solutions}
\int_{0}^{\tau} \int_{\T^3} d|\mu_m| + \int_{0}^{\tau} \int_{\T^3} d|\mu_e| \leq C \int_{0}^{\tau} \mathcal{D}(t) dt,
\end{equation}	
holds $\p$-a.s., for every $\tau \in (0,T)$.
\end{enumerate}
\end{Definition}

\begin{Remark}
We remark that, in light of a standard Lebesgue point argument applied to \eqref{third condition measure-valued solution}, energy inequality holds for a.e. $0 \le s <t$ in $(0,T)$: 
\begin{equation}
\begin{aligned}
\label{energy_001}
&  \int_{\T^3} \left\langle \mathcal{V}^{\omega}_{t,x}; \frac{1}{2} \frac{|\textbf{m}|^2}{\varrho} +P(\varrho) \right \rangle \,dx + \mathcal{D}(t)\\
&\qquad \leq  \int_{\T^3}  \left\langle \mathcal{V}^{\omega}_{s,x}; \frac{1}{2} \frac{|\textbf{m}|^2}{\varrho} +P(\varrho)\right \rangle  \,dx + \mathcal{D}(s) + \frac{1}{2} \int_s^{t} \bigg(\int_{\T^3} \sum_{k = 1}^{\infty} \left\langle \mathcal{V}^{\omega}_{\tau,x};\varrho^{-1}| \Psi_k (\varrho, \textbf{m}) |^2 \right\rangle \bigg)\, d\tau \\
& \qquad \qquad+ \frac12\int_s^{t} \int_{\T^3} d \mu_e + \int_s^{t}  dM^2_{E}, \,\,\p-a.s.
\end{aligned}
\end{equation}
However, to establish weak (measure-valued)--strong uniqueness principle, we require energy inequality to hold for \emph{all} $s, t \in (0,T)$. This can be achieved following the argument depicted in Section~\ref{proof1}.
\end{Remark}

\begin{Remark}
Note that the above solution concept slightly differs from the dissipative measure-valued martingale solution concept introduced by Martina et. al. \cite{MKS01}. Indeed, the main difference lies in the successful identification of the martingale term present in \eqref{second condition measure-valued solution}.
\end{Remark}

\subsection{Numerical scheme}
\label{scheme}

It is well known that standard finite difference, finite volume and finite element methods have been very successful in computing solutions to system of hyperbolic conservation laws, including deterministic compressible fluid flow equations. Here we consider a semi-discrete finite volume scheme for the stochastic compressible Euler equations \eqref{P1}--\eqref{P2}. In what follows, drawing preliminary motivation from the analysis depicted in \cite{FL17,FLM,FLM1}, we describe the finite volume numerical scheme which is later shown to converge in appropriate sense. More precisely, we show that  the sequence of numerical solutions generate the Young measure that represents the dissipative measure-valued martingale solution.

\subsubsection{Spatial discretization}
We begin by introducing some notation needed to define the semi-discrete finite volume scheme. Throughout this paper, we reserve the parameter $h$ to denote small positive numbers that represent the spatial discretizations parameter of the numerical scheme.
Note that, since we are working in a periodic domain in $\R^3$, the relevant domain for the space discretization is $[0,\ell]^3,$ $\ell>0$. To this end, we introduce the space 
discretization by finite volumes (control volumes). For that we need to recall the definition of so called  admissible meshes for finite 
volume scheme.

\begin{Definition}[Admissible mesh] \label{def:admissible mesh}
	An admissible mesh $\mathcal{T}$ of $[0,\ell]^3$  is
	a family of disjoint regular quadrilateral connected subset of $[0,\ell]^3$ satisfying the following:
	\begin{itemize}
		\item [i)] $[0,\ell]^3$ is the union of the closure of the elements (called control volume K) of $\mathcal{T}$, i.e., $[0,\ell]^3:= \cup_{K \in \mathcal{T}} \bar{K}$.
		\item[ii)] The common interface of any
		two elements of $\mathcal{T}$  is included in a hyperplane of $[0,\ell]^3$.
		\item[iii)] There exists nonnegative constant $\alpha$ such that 
		\begin{align*}
		\begin{cases}
		\alpha h^3 \le |K|, \\
		|\partial K | \le \frac{1}{\alpha}h^{2}, \quad \forall K\in \mathcal{T},
		\end{cases}
		\end{align*}
		where $h= \sup\big\{ \text{diam} (K): K\in \mathcal{T}\big\} < + \infty$, $|K|$ denotes the $3$-dimensional Lebesgue measure of $K$, and 
		$|\partial K |$ represents the $2$-dimensional Lebesgue measure of $\partial K $.
	\end{itemize}
\end{Definition}
In the sequel, we denote the followings:
\begin{itemize}
	\item  $\mathrm{E}_K$: the set of interfaces of the control volume $K$.
	\item $\mathcal{N}(K)$: the set of control volumes neighbors of the control volume $K$.
	\item $\sigma_{K,L}$: the common interface between $K$ and $L$, for any $L\in \mathcal{N}(K)$.
	\item $\mathrm{E}$: the set of all the interfaces of the mesh $\mathcal{T}$.
	\item $\mathbf{n}_K^+$: the unit normal vector to interface $\sigma_{K,L}$, oriented from $K$ to $L$, for any $ L \in \mathcal{N}(K)$.
	\item $\bfe_p$: the unit basis vector in the $p$-th space direction, $p =1, 2, 3.$ Note that in our case the mesh is a regular  quadrilateral grid, and thus $\mathbf{n}_K^{+}$ is parallel to $\mathbf{e}_p$, for some $p=1,2,3.$
\end{itemize}

Let $ \mathcal{Y}(\grid)$ denote the space of piecewise constant functions defined on admissible mesh $\grid.$ For $w_h \in \mathcal{Y}(\grid)$ we set
$\displaystyle w_K \equiv w_{h_{|_{\elem}}}.$  Then it holds that
\begin{align*}
\intO{w_h}=h^3\sum_{K\in\grid} w_K .
\end{align*}
The value of  $W_h $ on the face $\sigma$ shall be denoted by $\bv{W},$ and analogously for faces $s\pm$ of cell $K$ in $\pm\bfe_s$ direction.
We also introduce a standard projection operator 
\begin{align*}
\Pi_h : L^1(\T^3) \rightarrow \mathcal{Y}(\grid), \quad (\Pi_h(\varphi))_{K}\defeq\frac{1}{h^3}\int_{\elem}{\varphi(x)\,dx}.
\end{align*}
For $w_h, W_h \in X(\grid)$ we define the following discrete operators
\begin{align*}
\der{w_h}&\defeq \frac{w_L-w_J}{2h}, \ \derp{w_h}\defeq \frac{w_L-w_K}{h},\ \derm{w_h}\defeq \frac{w_K-w_J}{h},
\quad L=K+h\bfe_p, J=K-h\bfe_p, \\
\fluxder{W_h}&\defeq \frac{W_{\sigma,p+}-W_{\sigma,p-}}{h}, \quad p =1, 2, 3.
\end{align*}
The discrete  Laplace and divergence operators
are defined as follows
\begin{align*}
\laph{w_h}&\defeq \frac{1}{h^2}\sum_{L\in\mathcal{N}(K)}(w_L-w_K)=\sum_{p=1}^3\laphs{w_h}, \\  \diht{{\bf w}_h}&\defeq \sum_{p=1}^3 \der{w_h^p}, \quad
\dih{{\bf W}_h} \defeq \sum_{p=1}^3 \fluxder{W_h^p}.
\end{align*}
Furthermore, on the face $\sigma=K|L \in \edge$ we define the  jump and mean value operators
\begin{align*}
\jump{w_h}:=w_L \bold{n}_K^+ + w_K\bold{n}_K^-, \quad \avrg{w_h} \defeq  \frac{w_K+w_L}{2}, \quad L=K+h\bfe_p,\ p=1,2,3,
\end{align*}
respectively. Here $\mathbf{n}_K^+,$ $\mathbf{n}_K^- \equiv \mathbf{n}_L^+$ denote the unit  outer normal to $\elem$ and $\elemL,$ respectively. Finally, we introduce the mean value of $w_h \in \mathcal{Y}(\grid)$ in cell $\elem$ in the direction of $\bfe_p$ by
\begin{align*}
\avrgK{w_h}:=\frac{w_L+w_J}{2}, \quad L=K+h\bfe_p, \ J=K-h\bfe_p.
\end{align*}

\subsubsection{Entropy stable flux and the scheme}
\label{sec:scheme_01}
Note that constructing and analyzing numerical schemes for the deterministic counterpart of the underlying system of equations \eqref{P1}--\eqref{P2} has a long tradition. Usually the schemes are developed to satisfy certain additional properties like entropy condition and kinetic energy stability which can be important for
turbulent flows. To that context, Tadmor \cite{Tad03} proposed the idea of entropy conservative numerical fluxes which can then be combined with some dissipation terms using entropy variables to obtain a scheme that respects the entropy condition, i.e., the scheme must
produce entropy in accordance with the second law of thermodynamics. Such a flux is called \emph{entropy stable flux}.

In order to introduce the finite volume numerical scheme for the underlying system of equations, let us first recast the system of equations \eqref{P11}--\eqref{P21} in the following form:
\begin{align*}
\D \vU(t) + \mathrm{div} f(\vU) \,dt &= \mathbb{H}\big(\vr, \vm \big)\,\D W(t),\\
\vU(t,0)&=\vU_0,
\end{align*}
where we introduced the variables $\vU =[\vr, \vm]$, $f(\vU)= [\vm, \frac{\vm \otimes \vm}{\vr} + p(\vr)\mathbb{I}]$, and $\mathbb{H}(\vr, \vm)=[0, \Psi(\vr, \vm)]$. 

We propose the following semi-discrete (in space) finite volume scheme approximating the underlying system of equations \eqref{P11}--\eqref{P21}
\begin{align}\label{entropy_bE}
\D \vU_K(t) + \diht{{\bf F_h}(t)} \,dt &= \mathbb{H}\big(\vr_K(t), \vm_K(t)\big)\,\D W(t), \,\, t>0, \, K \in \grid, \\
 \vU_K(0) &= (\Pi_h (\vU_0))_K, \,\, K \in \grid. \nonumber
\end{align}
Note that \eqref{entropy_bE} is a stochastic differential equation in $V:=L^\gamma(\mathbb{T}^3)\times L^{\frac{2\gamma}{\gamma+1}}(\mathbb{T}^3)$. Let us now specify the numerical flux ${\bf F_h} := {\bf F_h}(\vU_K, \vU_L)$ associated to the flux function $f$. Indeed, we want ${\bf F_h}$ to satisfy the following properties:
\begin{itemize}
	\item [(a)] (Consistency) The function ${\bf F_h}$ satisfies
	$ {\bf F_h}(a,a)=f(a)$, for all $a\in V$.
	\item[(b)] (Lipschitz continuity) There exist two constants $F_1, F_2 >0$ such that  for any $a,b,c \in V$, it holds that
	\begin{align*}
	\big|{\bf F_h}(a,b)-{\bf F_h}(c,b)\big| \le F_1 \big| a-c|, \\
	\big|{\bf F_h}(a,b)-{\bf F_h}(a,c)\big| \le F_2 \big|b-c|.
	\end{align*}
	\item[(c)] (Entropy stability) The flux ${\bf F_h}$ is entropy stable.
\end{itemize}

 Note that there are plethora of numerical fluxes available in literature satisfying the above three conditions. However, to illustrate the main ideas, we will consider a scheme with a Lax-Friedrichs-type numerical flux $\bfF_h$ (which is entropy stable) whose value on a face $\sigma = K|L$ is given by
\begin{align}\label{num_flux}
\bv{\bfF}\defeq \avrg{\bff(\vU_h)}-\bv{\lambda} \jump{\vU_h}.
\end{align}
Here the global diffusion coefficient is
$\displaystyle \bv{\lambda} \equiv \lambda\defeq \max_{K\in\grid}\max_{s=1,\ldots,N} |\lambda^s(\vU_K)|$, while the local diffusion coefficient is
$\displaystyle \bv{\lambda} \defeq \max_{s=1,\ldots,N} \max( |\lambda^s (\vU_K)|, |\lambda^s(\vU_L)|).$
Note that $\lambda^s$ is the $s-$th eigenvalue of the corresponding Jacobian matrix $\bff'(\vU_h)$. We mention that we restrict ourselves to the case of constant numerical viscosities. However, one can easily extend the results to local diffusion case, as presented in \cite{FLM}. 
Using the above notation, we introduce the following semi-discrete finite volume scheme to approximate system \eqref{P11}--\eqref{P21}.
The scheme can be written in the standard  per cell finite volume formulation for all $K \in \grid,$
\begin{subequations}\label{scheme_fv}
\begin{align}
&\D \vr _K(t) + \sum_{\sigma \in \facesK} \frac{|\sigma|}{|K|} F_h(\vr_h(t),\vu_h(t))_K =0,
\\
&\D \vm_K(t) + \sum_{\sigma \in \facesK} \frac{|\sigma|}{|K|}
\left({\bf F}_h(\vr_h(t), \vm_h(t))_K  + \Ov{p_K}(t) \vc{n}\right) =\Psi\big(\vrh(t), \vm_K(t)\big)\,\D W(t).
\end{align}
\end{subequations}

\noindent We can rewrite the above scheme \eqref{scheme_fv} in the following explicit form
\begin{subequations}\label{scheme_bE}
	\begin{align}
	\D {\vr_K(t)}&+\diht{\vm_h(t)}- \lambda h \laph{\vr_h(t)}=0, \label{cont_bE} &\\
	\D {\vm_K(t)}&+\diht{\left(\frac{\vm_h(t)\otimes\vm_h(t)}{\vr_h(t)}+p_h(t)\mathbb{I}\right)}-\lambda  h\laph{\vm_h(t)} =\Psi\big(\vrh(t), \vm_K(t)\big)\,\D W(t),\ t >0 \ K \in \grid. \label{mom_bE}&
	\end{align}
\end{subequations}

\noindent {\bf Existence of numerical solutions.}
Note that the set of equations \eqref{scheme_bE} represent a system of stochastic differential equations. The discrete problem \eqref{scheme_bE} admits a unique (probabilistically) strong solution $(\vr_h(t), \vm_h(t))$  for every $t \in (0,T)$. This follows from the classical results on stochastic differential equations, thanks to positivity of the density $\varrho_K$ (cf. Lemma~\ref{LemP}) and Lipschitz continuity of the fluxes and the noise coefficient. For more details, we refer to \cite[Section 4]{FLM}.

\subsection{Statements of main results}

We now state main results of this paper. To begin with, regarding the convergence of solutions of the numerical scheme, we have the following theorem.

\begin{Theorem} 
\label{thm:exist}
Suppose that the approximate solutions $\{\vU_h=(\vr_h(t), \vm_h(t))\}_{h>0}$ be generated by the scheme \eqref{scheme_bE} for the stochastic Euler system. Moreover, assume that 
$$
0 < \underline{\varrho} \le \vr_h \le \tilde{\vr}, \quad |\vm_h| \le \tilde{\vm}, \,\,\mbox{$\p$-a.s. uniformly for}\,\, h\rightarrow 0,
$$
for some positive constants $\underline{\varrho}, \tilde{\varrho}$, and $\tilde{\vm}$. Then $\{\vU_h\}_{h>0}$ generates a dissipative measure-valued martingale solution to the barotropic Euler system in the sense of Definition~\ref{def:dissMartin}.

Next, we make use of the $\mathcal{K}$-convergence in the context of Young measures to conclude the following pointwise convergence of averages of numerical solutions to a dissipative measure-valued martingale solution to \eqref{P11}--\eqref{P21}.
\end{Theorem}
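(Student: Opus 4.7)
The plan is to pass to the limit $h \to 0$ in the scheme \eqref{scheme_bE}, identifying nonlinear limits through Young measures and concentration defect measures, while using a Skorokhod-type argument to handle the stochastic integral. I would organize the argument in four stages: (i) uniform \emph{a priori} bounds and consistency of the discrete continuity/momentum equations, (ii) tightness and stochastic compactness, (iii) identification of the limit objects as a dissipative measure-valued martingale solution, and (iv) derivation of the energy inequality together with the martingale term $M_E$.

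First, using the hypotheses $0 < \underline{\varrho} \le \vr_h \le \tilde\vr$ and $|\vm_h| \le \tilde\vm$, one reads off uniform bounds on $\vr_h$ in $L^\infty_\omega L^\infty_{t,x}$, on $\vm_h$ in $L^\infty_\omega L^\infty_{t,x}$, on $p(\vr_h) = \vr_h^\gamma$, and on the convective term $\vm_h\otimes\vm_h/\vr_h$. Using the results of Section~\ref{consistency}, the discrete scheme \eqref{scheme_bE} yields the consistency formulation
\begin{align*}
\int_{\T^3}\vr_h(\tau)\varphi\,dx - \int_{\T^3}\vr_h(0)\varphi\,dx &= \int_0^\tau\!\!\int_{\T^3}\vm_h\cdot\nabla_x\varphi\,dx\,dt + R_1^h(\tau,\varphi),\\
\int_{\T^3}\vm_h(\tau)\cdot\bm{\varphi}\,dx - \int_{\T^3}\vm_h(0)\cdot\bm{\varphi}\,dx &= \int_0^\tau\!\!\int_{\T^3}\Bigl(\tfrac{\vm_h\otimes\vm_h}{\vr_h}:\nabla_x\bm{\varphi} + p(\vr_h)\,\di\bm{\varphi}\Bigr)dx\,dt \\
&\quad + \int_0^\tau\!\!\int_{\T^3}\bm\varphi\cdot\Psi(\vr_h,\vm_h)\,dx\,\D W + R_2^h(\tau,\bm\varphi),
\end{align*}
where the residuals $R_i^h$ vanish as $h\to 0$ for smooth test functions $\varphi,\bm\varphi$. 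Fractional-in-time Kolmogorov-type bounds of Hölder character follow from the BDG inequality applied to the stochastic term and from the structure of the discrete fluxes, giving uniform control in $W^{\gamma,2}(0,T;W^{-\ell,2}(\T^3))$ for some $\gamma<1/2$ and $\ell>3/2$; together with Lemma~\ref{comp} this gives tightness in $C([0,T];W^{-\ell,2}(\T^3))$ for $\vr_h,\vm_h$.

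Next, I would set up a path space $\mathcal{X}\times\mathcal{Y}$ where $\mathcal{X}$ contains the driving noise $W$ (in $C([0,T];\mathfrak{W}_0)$) and $\mathcal{Y}$ collects the joint law of $(\vr_h,\vm_h)$ in the compact-embedded space together with the Young measure $\mathcal{V}^h$ associated by Lemma~\ref{lem001} to $(\vr_h,\vm_h)$ viewed in $L_{w^*}^\infty((0,T)\times\T^3;\mathcal{P}(\mathcal{M}))$, and the weak-$*$ limits $\widetilde{F_i}\in L^\infty_{w^*}(0,T;\mathcal{M}_b(\T^3))$ of the nonlinear quantities $\vm_h\otimes\vm_h/\vr_h$, $p(\vr_h)$ and $\tfrac12|\vm_h|^2/\vr_h + P(\vr_h)$, together with the energy defect $\mathcal{D}_h$. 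Tightness being established, I would apply the Skorokhod--Jakubowski representation theorem (Theorem~\ref{skorokhod}) to obtain a new probability space $(\widetilde\Omega,\widetilde{\mf},\widetilde\p)$, new random variables $(\widetilde\vr_h,\widetilde\vm_h,\widetilde{\mathcal{V}}^h,\widetilde W_h)$ having the same joint laws, with almost sure convergence along a subsequence in the respective topologies. The limit $\widetilde W$ is a cylindrical Wiener process relative to the filtration generated by the limiting objects, which supplies item (02)--(03) of Definition~\ref{def:dissMartin}.

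With $\widetilde\p$-a.s.\ convergence in hand, passing to the limit in the continuity equation is direct and yields \eqref{first condition measure-valued solution}. In the momentum equation, the nonlinear fluxes $\vm_h\otimes\vm_h/\vr_h$ and $p(\vr_h)$ converge weakly-$*$ in $\mathcal{M}_b$ to $\langle\widetilde{\mathcal{V}}^\omega;\vm\otimes\vm/\vr\rangle\,dx\,dt + d\mu_m$ and $\langle\widetilde{\mathcal{V}}^\omega;p(\vr)\rangle\,dx\,dt + d\mu_p$ respectively, so that the combined concentration measure absorbed into $\mu_m$ in \eqref{second condition measure-valued solution} can be defined as $\mu_m + \mu_p\,\mathbb{I}$. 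For the stochastic integral I would use the standard martingale identification: define the $W^{-m,2}$-valued martingales
\[
M^h(t) = \vm_h(t) - \vm_h(0) + \int_0^t\!\di\!\Bigl(\tfrac{\vm_h\otimes\vm_h}{\vr_h}\Bigr)ds + \int_0^t\!\nabla p(\vr_h)\,ds - \lambda h\int_0^t\!\Delta\vm_h\,ds
\]
with quadratic variation $\langle\langle M^h\rangle\rangle_t = \int_0^t\sum_k|\Psi_k(\vr_h,\vm_h)|^2\,ds$ and cross variation with $W_h$ given by $\int_0^t\Psi(\vr_h,\vm_h)\,ds$. On the new space, one verifies the martingale properties of the limit $\widetilde M$ with respect to the canonical filtration; combined with the limits of the quadratic and cross variations, a standard argument (cf.\ \cite{MKS01,BrzezniakHausenblasRazafimandimby}) identifies $\widetilde M = \int_0^\cdot\langle\widetilde{\mathcal{V}}^\omega;\Psi(\vr,\vm)\rangle\,d\widetilde W$, giving the stochastic term in \eqref{second condition measure-valued solution}.

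Finally, for the energy inequality I would start from the discrete energy balance derived in Section~\ref{stability}, in which numerical dissipation and Itô-correction terms appear on the right-hand side, and the stochastic contribution is the real-valued martingale $M_E^h(t) = \int_0^t\!\int_{\T^3}\vm_h/\vr_h\cdot\Psi(\vr_h,\vm_h)\,dx\,\D W_h$. Passing to the limit $\widetilde\p$-a.s.\ along the new subsequence, one identifies the nonlinear kinetic/pressure term by the Young measure, absorbs the concentration excess into $\mu_e\in L^\infty_{w^*}(0,T;\mathcal{M}_b(\T^3))$ and the discrete dissipation plus non-negative defects of the convex quantities into $\mathcal{D}(t) \ge 0$; the Itô correction limit is exactly $\tfrac12\sum_k\langle\widetilde{\mathcal{V}}^\omega;\vr^{-1}|\Psi_k(\vr,\vm)|^2\rangle$. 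The energy inequality \eqref{third condition measure-valued solution} then holds in the form specified by Definition~\ref{def:dissMartin} with the limiting martingale $M_E$, first along Lebesgue points (since averages of $\mathrm{E}(s-),\mathrm{E}(t+)$ are used) and then for \emph{all} $s<t$ via the one-sided limit argument sketched in the Remark. The inequality \eqref{fourth condition measure-valued solutions} controlling $\mu_m$ and $\mu_e$ by $\mathcal{D}$ is obtained by Lemma~\ref{lemma001}, comparing the concentration defects of the fluxes and of the energy using the structural bound $|\vm\otimes\vm/\vr|+p(\vr)\le C(\tfrac12|\vm|^2/\vr + P(\vr))$.

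The main obstacle will be the passage to the limit in the stochastic integral: one must simultaneously match the martingale structure, its quadratic variation and its cross-variation with $\widetilde W_h$ on the new probability space, and ensure that the measurability with respect to the limiting filtration is preserved; this is where the Skorokhod--Jakubowski framework (rather than the classical Skorokhod theorem) is indispensable because the path space for the Young measures is only quasi-Polish.
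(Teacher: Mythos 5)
Your overall architecture --- consistency formulation, tightness, Skorokhod--Jakubowski representation, Young measures with concentration defects, and the Lebesgue-point/one-sided-limit argument for the energy inequality --- matches the paper's proof in Section~\ref{proof1}. The one step where your route genuinely diverges, and where it breaks down, is the identification of the stochastic integral in the momentum equation. You propose the standard martingale method: show that the drift-corrected process $M^h$ is a martingale with quadratic variation $\int_0^t\sum_k|\Psi_k(\vr_h,\vm_h)|^2\,ds$ and cross-variation $\int_0^t\Psi(\vr_h,\vm_h)\,ds$ with $W_h$, pass to the limit in these, and conclude $\widetilde M=\int_0^\cdot\langle\widetilde{\mathcal V}^\omega;\Psi(\vr,\vm)\rangle\,d\widetilde W$. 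In the measure-valued setting this identification fails: since $(\vr_h,\vm_h)$ converges only in the sense of Young measures, the quadratic variation converges (weak-$*$, up to a defect) to $\int_0^t\int_{\T^3}\langle\widetilde{\mathcal V}^\omega_{s,x};\sum_k|\Psi_k(\vr,\vm)|^2\rangle\,dx\,ds+\mu_D$, which by Jensen's inequality strictly dominates $\int_0^t\int_{\T^3}\sum_k|\langle\widetilde{\mathcal V}^\omega_{s,x};\Psi_k(\vr,\vm)\rangle|^2\,dx\,ds$ whenever the Young measure is not a Dirac mass. Consequently the computation $\langle\langle\widetilde M-\int\langle\widetilde{\mathcal V};\Psi\rangle\,d\widetilde W\rangle\rangle=0$ that underpins the identification does not close; the method only shows that $\widetilde M$ is \emph{some} square-integrable martingale, which is exactly the weaker conclusion reached in \cite{MKS01} and the reason the paper's remark following Definition~\ref{def:dissMartin} singles out the identification of this term as its main new point.

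The paper closes this gap by a different device: item (4) of Proposition~\ref{prop:skorokhod1} guarantees $\widetilde W_h=\widetilde W$ for every $h$ on the new probability space, so the integrator is fixed; one then uses that $\Psi_k(\widetilde\vr_h,\widetilde\vm_h)\rightharpoonup\langle\widetilde{\mathcal V}^\omega_{t,x};\Psi_k(\vr,\vm)\rangle$ weakly in $L^2(\Omega\times[0,T];W^{-m,2}(\T^3))$ and that the It\^o integral against the \emph{fixed} $\widetilde W$ is a bounded linear, hence weakly continuous, map from (predictable) integrands to $L^2(\Omega;W^{-m,2}(\T^3))$. Linearity in the integrand is what lets the Young-measure average pass inside the stochastic integral. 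If you replace your martingale-identification step by this argument (and record explicitly that the driving Wiener process is unchanged under the Skorokhod--Jakubowski representation), the rest of your proposal --- including the treatment of the energy martingale $M_E$, which the definition does \emph{not} require to be identified as a stochastic integral and which the paper handles by tightness in $C([0,T];\R)$ plus a Vitali argument --- is in line with the paper.
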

	\begin{Theorem}
		\label{dissipative solution}Suppose that the approximate solutions $\{\vU_h=(\vr_h(t), \vm_h(t))\}_{h>0}$ to \eqref{scheme_bE} for the stochastic Euler system generate a dissipative measure-valued martingale solution $\big[ \big(\Omega,\mathbb{F}, (\mathbb{F}_{t})_{t\geq0},\mathbb{P} \big); \mathcal{V}^{\omega}_{t,x}, W \big]$ in the sense of Definition~\ref{def:dissMartin}. Then following holds true,
		\item[1.] there exists subsequence $\{\mathbf{U}_{h_k}=(\vr_{h_k}(t), \vm_{h_k}(t))\}_{h_k\textgreater\,0}$ such that, $\p$-a.s.
		$$\varrho_{h_k}\to\langle {\mathcal{V}^{\omega}_{t,x}}; \varrho \rangle\,\,\mbox{in}\,\,\,C_w([0,T],L^\gamma(\mathbb{T}^3)),$$
		$$\mathbf{m}_{h_k}\to\langle {\mathcal{V}^{\omega}_{t,x}}; {\textbf m} \rangle\,\,\mbox{in}\,\,\,C_w([0,T],L^{\frac{2\gamma}{\gamma+1}}(\mathbb{T}^3)).$$ 
		\item[2.] $\p$-a.s., there exists subsequece $\{\mathbf{U}_{h_k}=(\vr_{h_k}(t), \vm_{h_k}(t))\}_{h_k\textgreater\,0}$ such that
		\begin{equation*}
			\begin{aligned}
				\frac 1N \sum_{k=1}^N \vr_{h_k} &\to\langle {\mathcal{V}^{\omega}_{t,x}}; \varrho \rangle, \ \mbox{as $N \rightarrow \infty$ a.e. in} \,\,(0,T)\times\T^3, \\
				\frac 1N \sum_{k=1}^N \vm_{h_k} &\to \langle {\mathcal{V}^{\omega}_{t,x}}; {\textbf m} \rangle, \ \mbox{as $N \rightarrow \infty$ a.e. in} \,\,(0,T)\times\T^3.
			\end{aligned}
		\end{equation*}		
\end{Theorem}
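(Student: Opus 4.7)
The plan is to combine the Young measure convergence of the numerical approximations $\mathbf{U}_h=(\varrho_h,\mathbf{m}_h)$ with a uniform-in-$h$ time equicontinuity estimate, which will upgrade weak convergence in $L^p$ to $C_w([0,T];L^p)$ convergence and thus yield part~(1), and then to invoke Proposition~\ref{kconvergence} (the Koml\'os-type result) on the finite measure space $(0,T)\times\mt$ to produce the pointwise Ces\`aro convergence claimed in part~(2).

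For part~(1), by hypothesis and items (04)--(05) of Definition~\ref{def:dissMartin}, after passing to a subsequence we have $\p$-a.s.\ the weak-$*$ convergences $\varrho_h\rightharpoonup^*\langle\mathcal{V}^\omega_{t,x};\varrho\rangle$ in $L^\infty(0,T;L^\gamma(\mt))$ and $\mathbf{m}_h\rightharpoonup^*\langle\mathcal{V}^\omega_{t,x};\mathbf{m}\rangle$ in $L^\infty(0,T;L^{2\gamma/(\gamma+1)}(\mt))$. To upgrade these to $C_w$-in-time convergence, I will derive an equicontinuity estimate in a negative Sobolev space $W^{-\ell,2}(\mt)$ with $\ell>3/2$. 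For the density this follows directly from the discrete continuity equation \eqref{cont_bE}: testing against smooth $\varphi$ and using the uniform bounds on $\varrho_h$, $\mathbf{m}_h$ together with $\lambda h\,\Delta_h\varrho_h\to 0$ in $W^{-\ell,2}$, one obtains a uniform Lipschitz bound for $t\mapsto\langle\varrho_h(t),\varphi\rangle$. Combined with the uniform $L^\infty_tL^\gamma_x$ bound, the standard Arzel\`a--Ascoli argument with weak target topology gives compactness of $\varrho_h$ in $C_w([0,T];L^\gamma(\mt))$, and uniqueness of weak limits identifies the limit as $\langle\mathcal{V}^\omega_{t,x};\varrho\rangle$. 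For $\mathbf{m}_h$, the analogous bound is more delicate since \eqref{mom_bE} contains a stochastic integral. I will estimate $\mathbb{E}\|\mathbf{m}_h(t)-\mathbf{m}_h(s)\|^p_{W^{-\ell,2}(\mt)}$ by splitting the momentum increment into a deterministic part, handled as for $\varrho_h$, and the It\^o term, controlled via Burkholder--Davis--Gundy together with the Hilbert--Schmidt estimate provided by \eqref{FG1}--\eqref{FG2}. This will give $\mathbb{E}\|\mathbf{m}_h(t)-\mathbf{m}_h(s)\|^p_{W^{-\ell,2}}\le C|t-s|^{p/2}$ uniformly in $h$; the Kolmogorov criterion of Lemma~\ref{lemma01} then yields $\p$-a.s.\ H\"older continuity uniform in $h$, and the $C_w$ compactness argument identifies the limit with $\langle\mathcal{V}^\omega_{t,x};\mathbf{m}\rangle$.

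For part~(2), from part~(1) combined with the uniform bounds of Theorem~\ref{thm:exist}, we have $\p$-a.s.\ the weak convergences $\varrho_{h_k}\rightharpoonup\langle\mathcal{V}^\omega_{t,x};\varrho\rangle$ and $\mathbf{m}_{h_k}\rightharpoonup\langle\mathcal{V}^\omega_{t,x};\mathbf{m}\rangle$ in $L^1((0,T)\times\mt)$. Fixing $\omega$ in a set of full $\p$-measure and applying Proposition~\ref{kconvergence} component-wise on the finite measure space $(0,T)\times\mt$, together with a diagonal extraction to choose a common (in general $\omega$-dependent) subsequence for both sequences, produces almost everywhere convergence of the Ces\`aro means, and the limits must coincide with the already identified weak limits $\langle\mathcal{V}^\omega_{t,x};\varrho\rangle$ and $\langle\mathcal{V}^\omega_{t,x};\mathbf{m}\rangle$ by uniqueness.

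The main obstacle will be the stochastic equicontinuity bound for $\mathbf{m}_h$: the discrete convective flux, the Lax--Friedrichs numerical viscosity $\lambda h\,\Delta_h\mathbf{m}_h$, and the discretized multiplicative noise must all be controlled simultaneously in a sufficiently weak norm using only the natural bounds assumed in Theorem~\ref{thm:exist}. Once this estimate is in hand, the rest of the argument is a routine combination of $C_w$-compactness, uniqueness of weak limits, and the Koml\'os-type Proposition~\ref{kconvergence}.
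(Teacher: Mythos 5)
Your treatment of part~(2) coincides with the paper's: the paper also fixes $\omega$ in a set of full measure and applies the Koml\'os-type Proposition~\ref{kconvergence} on $(0,T)\times\T^3$, identifying the Ces\`aro limit with the barycenters via uniqueness of weak limits. (Your remark about the diagonal extraction for a common, possibly $\omega$-dependent, subsequence is a detail the paper glosses over, and is a welcome addition.)

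For part~(1), however, you diverge from the paper and the divergence hides a genuine gap. The paper does not re-prove the $C_w$-convergence at this stage: it simply invokes Proposition~\ref{prop:skorokhod1} together with \eqref{weak limit}--\eqref{weak limit 1}, i.e.\ the almost sure convergence in $C_w([0,T];L^\gamma)$ and $C_w([0,T];L^{2\gamma/(\gamma+1)})$ was already manufactured in Section~\ref{proof1} by establishing \emph{tightness} of the laws and then applying the Jakubowski--Skorokhod representation (Theorem~\ref{skorokhod}) on a new probability space. Your plan instead tries to obtain $\p$-a.s.\ compactness of $\{\vm_h\}_h$ directly on the original space from the moment bound $\E\|\vm_h(t)-\vm_h(s)\|^p_{W^{-\ell,2}}\le C|t-s|^{p/2}$ via Lemma~\ref{lemma01}. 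This does not work as stated: the Kolmogorov criterion applied to each fixed $h$ yields a H\"older-continuous modification with a \emph{random} H\"older constant, and uniform-in-$h$ moment bounds only give boundedness of these constants in probability (equivalently, tightness of the laws in $C([0,T];W^{-\ell,2})$ via Lemma~\ref{comp}); they do not give an almost surely finite supremum over $h$ of the pathwise H\"older norms, which is what your Arzel\`a--Ascoli step requires on a single set of full measure. Converting tightness into almost sure convergence is precisely the role of the Skorokhod--Jakubowski theorem, and this is why the paper routes the argument through Propositions~\ref{prop:rhotight}--\ref{prop:skorokhod1} rather than through a pathwise equicontinuity bound. If you wish to keep your structure, the honest fix is either to invoke Proposition~\ref{prop:skorokhod1} for part~(1) (as the paper does, noting that the hypothesis that $\{\vU_h\}$ ``generates'' the measure-valued solution already places you on the post-Skorokhod space), or to insert the tightness-plus-representation step explicitly before the identification of the limits. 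The deterministic density part of your argument and the identification of the limits with $\langle\mathcal{V}^{\omega}_{t,x};\vr\rangle$, $\langle\mathcal{V}^{\omega}_{t,x};\textbf{m}\rangle$ via the Young-measure barycenters are fine.
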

\noindent Finally, making use of the weak (measure-valued)--strong uniqueness principle (cf. Theorem~\ref{Weak-Strong Uniqueness_01}), we prove the following result justifying the strong convergence to the regular solution.
\begin{Theorem}
\label{T_ccE}\ \\
Suppose that the approximate solutions $\{\vU_h\}_{h>0}$ to \eqref{scheme_bE} for the stochastic Euler system generate a dissipative measure-valued martingale solution in the sense of Definition~\ref{def:dissMartin}.
In addition, let the Euler equations \eqref{P11}--\eqref{P21} possess the unique strong (continuously differentiable) solution $(\bar{\vU}, (\mathfrak{t}_R)_{R\in\mn}, \mathfrak{t})=([\bar{\vr},\bar{\vm}], (\mathfrak{t}_R)_{R\in\mn}, \mathfrak{t})$, emanating form the initial data \eqref{P2}. Then $\p$-a.s.
\begin{equation*}
\begin{aligned}
\vr_{h}(\cdot\wedge\mathfrak{t}_R) &\to \bar{\vr}(\cdot\wedge\mathfrak{t}_R) \ \mbox{weakly-(*) in} \ L^{\infty}(0,T;L^\gamma(\T^3))
\ \mbox{and strongly in}\ L^1((0,T) \times \T^3), \\
\vm_{h}(\cdot\wedge\mathfrak{t}_R) &\to \bar{\vm}(\cdot\wedge\mathfrak{t}_R) \ \mbox{weakly-(*) in} \ L^{\infty}(0,T;L^{2\gamma/(\gamma+1)}(\T^3)) \ \mbox{and strongly in}\ L^1((0,T) \times \T^3; \R^3)). 
\end{aligned}
\end{equation*}
\end{Theorem}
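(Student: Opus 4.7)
The plan is to string together three ingredients already available in the paper: the fact that the scheme generates a dissipative measure-valued martingale solution (Theorem~\ref{thm:exist}), the weak (measure-valued)--strong uniqueness principle (Theorem~\ref{Weak-Strong Uniqueness_01}), and the pointwise/weak convergence of arithmetic averages (Theorem~\ref{dissipative solution}). The first ingredient yields a Young measure $\mathcal{V}^\omega_{t,x}$ generated by $\{\vU_h\}_{h>0}$, possibly after extracting a subsequence and invoking the Skorokhod--Jakubowski representation (Theorem~\ref{skorokhod}). The second ingredient, applied with the strong pathwise solution $(\bar{\vr},\bar{\vm},(\mathfrak{t}_R)_{R\in\mn},\mathfrak{t})$ provided by Theorem~\ref{thm:main}, forces $\mathcal{V}^\omega_{t,x}$ to collapse to a Dirac mass on the stopped interval: $\p$-a.s. and for a.e.\ $(t,x)\in[0,\mathfrak{t}_R]\times\T^3$,
\begin{equation*}
\mathcal{V}^\omega_{t,x}=\delta_{(\bar{\vr}(t,x),\bar{\vm}(t,x))},\qquad \mu_m\equiv 0,\ \mu_e\equiv 0,\ \mathcal{D}\equiv 0 \text{ on } [0,\mathfrak{t}_R].
\end{equation*}

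Once the Young measure is identified as a Dirac, Theorem~\ref{dissipative solution}(1) directly yields the weak-$(*)$ convergence
$\vr_h(\cdot\wedge\mathfrak{t}_R)\rightharpoonup^*\bar{\vr}(\cdot\wedge\mathfrak{t}_R)$ in $L^\infty(0,T;L^\gamma(\T^3))$ and
$\vm_h(\cdot\wedge\mathfrak{t}_R)\rightharpoonup^*\bar{\vm}(\cdot\wedge\mathfrak{t}_R)$ in $L^\infty(0,T;L^{2\gamma/(\gamma+1)}(\T^3))$, $\p$-a.s. To upgrade to strong $L^1$ convergence, I apply the Young measure generation property (Lemma~\ref{lem001}) to the Carath\'eodory function
\begin{equation*}
F(t,x,\vr,\vm)\defeq |\vr-\bar{\vr}(t,x)|+|\vm-\bar{\vm}(t,x)|,
\end{equation*}
which, thanks to the uniform pointwise bounds $\vr_h\le\tilde{\vr}$, $|\vm_h|\le\tilde{\vm}$ from the hypothesis and the continuity of $(\bar{\vr},\bar{\vm})$ on $[0,\mathfrak{t}_R]$, enjoys the growth required by that lemma. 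Then
\begin{equation*}
F(t,x,\vr_h,\vm_h)\rightharpoonup\left\langle\mathcal{V}^\omega_{t,x};F\right\rangle=F(t,x,\bar{\vr},\bar{\vm})=0 \quad \text{weakly in } L^p((0,\mathfrak{t}_R)\times\T^3),
\end{equation*}
for any $p\in(1,\infty)$. Testing the non-negative integrand against the constant function $\phi\equiv 1$ turns this weak convergence into convergence of $L^1$ norms, which is precisely the claimed strong $L^1$-convergence.

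The main obstacle is the standard probabilistic one: the measure-valued limit is in principle obtained on a new probability space via Theorem~\ref{skorokhod}, whereas the statement demands $\p$-a.s.\ convergence on the original stochastic basis. I would resolve this by a Gy\"ongy--Krylov type argument: since the strong solution from Theorem~\ref{thm:main} is \emph{unique}, every subsequence of $\{\vU_h\}$ admits a further subsequence whose generated Young measure (possibly on a new space) must coincide with $\delta_{(\bar{\vr},\bar{\vm})}$. Uniqueness of the limit in law, combined with the uniform bounds, then forces convergence in probability on the original space $(\Omega,\mf,\p)$, which in turn yields $\p$-a.s.\ convergence along a subsequence; the uniform $L^\infty$ bounds lift the a.s.\ convergence to the claimed strong $L^1((0,T)\times\T^3)$-convergence via dominated convergence. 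A secondary technicality is the commutation of the stopping $\cdot\wedge\mathfrak{t}_R$ with the Young-measure arguments; this is handled by noting that $\mathfrak{t}_R$ is an $(\mf_t)$-stopping time and that all estimates above are localized on $[0,\mathfrak{t}_R]$ by construction.
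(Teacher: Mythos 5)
Your proposal is correct and follows the same skeleton as the paper: Skorokhod--Jakubowski representation plus the weak (measure-valued)--strong uniqueness principle (Theorem~\ref{Weak-Strong Uniqueness_01}) collapse the Young measure to $\delta_{(\bar{\vr},\bar{\vm})}$ on $[0,\mathfrak{t}_R]$, the $C_w$-convergence from Proposition~\ref{prop:skorokhod1} then gives the weak-$(*)$ statements, and the remaining work is the upgrade to strong $L^1$. Where you genuinely diverge is in that last step: the paper passes from the Dirac limit Young measure to convergence \emph{in measure} via Balder's Proposition 4.16 and then concludes by Vitali's theorem using the uniform integrability supplied by the energy bounds \eqref{aprhov}--\eqref{estrhou2}; you instead feed the Carath\'eodory function $F(t,x,\vr,\vm)=|\vr-\bar{\vr}(t,x)|+|\vm-\bar{\vm}(t,x)|$ into Lemma~\ref{lem001} and test against $\varphi\equiv 1$. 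Your route is cleaner and more self-contained, and it does \emph{not} actually need the uniform pointwise bounds $\vr_h\le\tilde{\vr}$, $|\vm_h|\le\tilde{\vm}$ that you invoke --- which is fortunate, since those bounds are hypotheses of Theorem~\ref{thm:exist}, not of Theorem~\ref{T_ccE}: linear growth in $(\vr,\vm)$ is strictly subcritical relative to the $L^\gamma\times L^{2\gamma/(\gamma+1)}$ energy bounds (because $\gamma>1$), so the growth condition of Lemma~\ref{lem001} holds and the weak limit of $F(\vr_h,\vm_h)$ carries no concentration defect. Finally, your Gy\"ongy--Krylov addendum addresses the transfer from the new probability space back to the original one, a point the paper's proof leaves entirely implicit; it is the right mechanism (uniqueness of the strong pathwise solution makes the limit law deterministic along every subsequence), though to execute it you would still need to note that the maximal strong solution can be realized on the Skorokhod space with respect to $\widetilde{W}$ so that Theorem~\ref{Weak-Strong Uniqueness_01} is applicable there. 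In short: same architecture, a different (and arguably tidier) final compactness argument, plus an extra step that patches a gap the paper does not discuss.
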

\begin{Remark}
Note that the results stated in Theorem~\ref{T_ccE} are unconditional provided that:
\begin{itemize}
\item [(1)]the limit system admits a smooth solution.
\item [(2)] numerical approximations generate a dissipative measure-valued martingale solution.
\end{itemize}
\end{Remark}

\section{Stability of the Numerical Scheme}
\label{stability}
We show the stability of the numerical schemes defined in previous Section by deriving   a priori estimates.

\subsection{A priori estimates for the stochastic Euler system}

The approximate solutions resulting from scheme \eqref{scheme_bE} enjoy the following properties:

\begin{itemize}[wide=0pt]
\item[1.] {\bf Conservation of mass}\\

Multiplying the equation of continuity in \eqref{scheme_bE} by $h^3$ for all $K \in \mathcal{T}$, and integrating in time yields the total mass conservation, i.e., $\p$-a.s.
\begin{equation*}
\intO{ \vr_h (t, \cdot) } = \intO{ \vr_{h}^0 },\ t \geq 0.
\end{equation*}
\item[2.] {\bf Conditional positivity of numerical density} \\

We show positivity of the density under an additional hypothesis on the approximate velocity. We assume that $\p$-a.s.
\begin{equation} \label{vHYPP}
\vc{u}_h \equiv \frac{\vc{m}_h(t)}{\vr_h(t)} \in L^2(0,T; L^\infty(\Omega)).
\end{equation}
	
Thus the first two equations of the numerical scheme for the Euler system  read,
\begin{subequations}\label{aux_scheme_bE}
\begin{align}
\Dt{\vr_K(t)}&+\diht{(\vr_h(t)\vu_h(t))}-\lambda  h\laph{\vr_h(t)}=0, \label{aux_cont_bE} \\
\Dt{(\vr_K(t)\vu_K(t))}&+\diht{\big(\vr_h(t)(\vu_h(t)\otimes\vu_h(t))+p_h(t)\mathbb{I}\big)}-\lambda  h\laph{(\vr_h(t)\vu_h(t))} \notag \\
&\qquad \qquad =\Psi\big(\vrh(t), \vr_K(t)\vu_K(t) \big)\,\D W(t), \label{aux_mom_bE}
\end{align}
\end{subequations}
equipped with the relevant initial conditions.

\begin{Lemma}\label{LemP}
Let $\vr_h(0)>0,$ and let the numerical solution $(\vr_h(t),\vu_h(t)),$ $t>0$ satisfy the discrete continuity equation \eqref{aux_cont_bE}, where we assume $\vu_h$ satisfies \eqref{vHYPP}. Then $\p$-a.s.
\begin{align*}
\dv{\vr}(t)  > \underline{\vr}>  0, \quad t \in [0,T],\  K \in \grid.
\end{align*}
\end{Lemma}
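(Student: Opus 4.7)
The continuity equation \eqref{aux_cont_bE} carries no stochastic forcing, so for every fixed $\omega \in \Omega$ it is simply a finite system of linear ODEs for the vector $(\vr_K(t))_{K\in\grid}$, with coefficients determined by the velocity field $\vu_h$. Since by hypothesis \eqref{vHYPP} we have $\vu_h \in L^2(0,T; L^\infty(\T^3))$, and since the Lax--Friedrichs global viscosity $\lambda$ is bounded in terms of $\|\vu_h\|_{L^\infty}$ together with $\vr_h^{(\gamma-1)/2}$ (via the eigenvalues of the flux Jacobian), these coefficients are in $L^1(0,T)$ on the interval of existence of $\vr_h$. My plan is to exploit this linear, sign-friendly structure via a Duhamel representation.

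Expanding the operators $\widetilde{\text{div}}_h$ and $\Delta_h$ in \eqref{aux_cont_bE}, and denoting $K_p^\pm = K \pm h\mathbf{e}_p$, one rewrites the equation as
\begin{equation*}
\frac{d}{dt}\vr_K(t) + \frac{6\lambda(t)}{h}\vr_K(t) = \frac{1}{h}\sum_{p=1}^{3}\left[\Big(\lambda(t)-\tfrac12 \vu^p_{K_p^+}(t)\Big)\vr_{K_p^+}(t) + \Big(\lambda(t)+\tfrac12 \vu^p_{K_p^-}(t)\Big)\vr_{K_p^-}(t)\right].
\end{equation*}
By the very definition of $\lambda$ as the maximum of $|\lambda^s(\vU_K)|$ over $K$ and $s$, and since the eigenvalues of the Jacobian contain $\vu \pm c(\vr)$, we have $\lambda(t) \ge \max_{L,p}|\vu^p_L(t)|$, so that every off-diagonal coefficient is non-negative. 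Thus the system takes the form $\dot{\vr}_K + a(t)\vr_K = \sum_L b_{K,L}(t)\vr_L$ with $a\in L^1(0,T)$ and $b_{K,L}\ge 0$.

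The integrating factor $\exp(\int_0^t a(s)\,ds)$ then yields the Duhamel representation
\begin{equation*}
\vr_K(t) = e^{-\int_0^t a(s)\,ds}\vr_K(0) + \int_0^t e^{-\int_s^t a(\tau)\,d\tau}\sum_{L} b_{K,L}(s)\,\vr_L(s)\,ds.
\end{equation*}
I would first argue non-negativity by a continuity/contradiction argument: let $t_\star = \inf\{t>0 : \min_K \vr_K(t) = 0\}$; the set is non-empty only if $t_\star > 0$, but then on $[0,t_\star)$ every $\vr_L>0$, so the Duhamel identity forces $\vr_{K}(t_\star) \ge e^{-\int_0^{t_\star} a(s)\,ds}\vr_K(0) > 0$ for every $K$, a contradiction. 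Once $\vr_h > 0$ is established on $[0,T]$, the same identity immediately gives the quantitative bound
\begin{equation*}
\vr_K(t) \ge e^{-\int_0^T a(s)\,ds}\min_{K\in\grid}\vr_K(0) \eqdef \underline{\vr} > 0, \qquad t\in[0,T],\ K\in\grid.
\end{equation*}

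The main technical point to verify is integrability of $a(t) = 6\lambda(t)/h$. Hypothesis \eqref{vHYPP} directly controls the velocity contribution to $\lambda$ in $L^2(0,T)$, while the sound-speed contribution $c = \sqrt{\gamma\vr^{\gamma-1}}$ is handled by a standard bootstrap: on any interval $[0,t_\star]$ on which $\vr_h$ stays finite the ODE has locally Lipschitz coefficients and thus unique classical solutions; continuation of $\vr_h$ up to $T$ follows from the mass conservation identity in Step~1 together with the strict lower bound just obtained. This is really the only delicate point; everything else is a direct consequence of the M-matrix structure that the Lax--Friedrichs diffusion was designed to produce.
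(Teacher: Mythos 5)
Your argument is correct and is essentially the proof the paper intends: the paper's own ``proof'' merely defers to \cite{FLM}, and the integrating-factor/first-touch argument you give --- resting on the off-diagonal coefficients $\tfrac1h\big(\lambda \mp \tfrac12 \vu^p_L\big)$ being non-negative because $\lambda$ dominates the eigenvalues $\vu^p\pm c$ of the flux Jacobian --- is exactly the positivity-preservation mechanism used there. The only point to tidy is the integrability of $a(t)=6\lambda(t)/h$: since the solution is assumed to exist and $\vr_h$ is pathwise continuous on the compact interval $[0,T]$ with finitely many cells, the sound-speed contribution to $\lambda$ is automatically bounded for each fixed $\omega$ and $h$, so the bootstrap through mass conservation (which is mildly circular, as the upper bound it yields presupposes the non-negativity being proved) is not needed, while the velocity contribution is in $L^2(0,T)$ by \eqref{vHYPP} as you say.
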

\begin{proof}
To establish the proof, one can follow \cite{FLM} modulo cosmetic changes. The details are left to the interested reader.
\end{proof}
Note that, under the hypothesis (\ref{vHYPP}), setting $\vm_h\equiv \vr_h\vu_h$ and comparing \eqref{aux_cont_bE} with \eqref{cont_bE}, we conclude that both formulations are equivalent.

\item[3.] {\bf Energy estimates} \\

First observe that the positivity of $\vr_h(t)$ implies that $\p$-almost surely $\vr_h \in L^{\infty}(0,T;L^1(\Omega))$. Next, we show that the underlying entropy stable finite volume scheme \eqref{scheme_bE} produces the discrete entropy inequality. To see this, let us denote by
\begin{align*}
\eta(\dv{\vU})= \frac{1}{2}\frac{|\dv{\vm}|^2}{\dv{\vr}} + P(\dv{\vr}),
\end{align*}
where $\vU_K(t) $ solves the equation \eqref{entropy_bE}. 
Now applying It\^o formula to the function $\eta(\vU_K(t))$ and using entropy stability properties of numerical flux functions \cite[Example 5.2]{Tad03}, we get the discrete energy inequality
\begin{align}\label{dis_en_bE}
\D \dv{\eta(\vU}(t))&+\dih{\bfQ_h(t)}\,dt  \\
&\le\,\sum_{k=1}^\infty\Psi_k (\vr_K(t), \vm_K(t))\cdot \vu_K(t)\, \D W_k(t)
+ \frac{1}{2}\sum_{k = 1}^{\infty}  \vr_K(t)^{-1}| \Psi_k (\vr_K(t), \vm_K(t)) |^2 \, {\rm d}t, \notag
\end{align}
where $\bfQ_h$ is a entropy stable flux. Since the numerical entropy flux is conservative, i.e., $\displaystyle\sum_{K\in\grid}\dih{\bfQ_h}=0,$  the integral of \eqref{dis_en_bE}  yields $\mathbb P$-a.s.
\begin{align}\label{001}
\int_{\T^3}{\eta({\bf U}_h}(t))dx
&\le\,\int_{\T^3}{\eta(\vU}_h(0))dx+\int_0^t\int_{\T^3}\sum_{k=1}^\infty\Psi_k (\vr_h(s), \vm_h(s))\cdot \vu_h(s)\,  dx\, \D W_k(s)\\&\qquad
+ \frac{1}{2}\int_0^t\int_{\T^3}\sum_{k = 1}^{\infty}  \vr_h(s)^{-1}| \Psi_k (\vr_h(s), \vm_h(s)) |^2 \,{\rm d}x \,{\rm d}s. \notag
\end{align}
We can apply the $p$-th power on both sides of \eqref{001}, and then take expectation to obtain usual energy bounds. In particular, we have following uniform bounds
\begin{align}
\frac{{\bf m}_h}{\sqrt{ \varrho_h}} &\in L^{p}(\Omega;L^\infty(0,T;L^2( \mathbb T^3))),\label{aprhov}\\
\varrho_h&\in L^{p}(\Omega;L^\infty(0,T;L^\gamma( \mathbb T^3))),\label{aprho}\\
{\bf m}_h &\in L^{p}(\Omega;L^\infty(0,T;L^\frac{2\gamma}{\gamma+1}( \mathbb T^3))),\label{estrhou2}
\end{align}
\begin{Remark}
Note that above estimates are natural in the context of stochastic compressible Euler equations.
\end{Remark}
Let  $\psi \in C_c^{\infty}([0,T))$, $\psi \ge 0$. Applying It\^o product formula to the function $\eta(\vU_K(t))\psi(t)$, we have energy inequality
\begin{align*}
\begin{aligned}
-\int_0^T \partial_t \psi \int_{\mathbb T^3} &\bigg[ \frac{1}{2}\frac{|{\bf m}_{h}|^2}{\varrho_{h}}  + \frac{\varrho^{\gamma}_{h}}{\gamma-1} \bigg] \,dx \,ds \leq \psi(0) \int_{\mathbb T^3} \bigg[ \frac{1}{2}\frac{|{\bf m}_{h}(0)|^2}{\varrho_{h}(0)}  + \frac{\varrho^{\gamma}_{h}(0)}{\gamma-1}\bigg] \dx \\
&\qquad +\sum_{k=1}^\infty\int_0^T \psi \bigg(\int_{\mathbb T^3}\Psi_k (\varrho_{h}, {\bf m}_{h})\cdot{\bf u}_{h}\dx\bigg){\rm d} W_k + \frac{1}{2}\sum_{k = 1}^{\infty}  \int_0^T \psi
\int_{\mathbb T^3} \varrho_{h}^{-1}| \Psi_k (\varrho_{h}, {\bf m}_{h}) |^2 \,dx\, {\rm d}s.
\end{aligned}
\end{align*}
holds $\mathbb P$-a.s., for all $\psi \in C_c^{\infty}([0,T))$, $\psi \ge 0$.

\item[4.] {\bf Additional estimates} \\

Regarding the regularity estimates for the discrete numerical solution, we have the following lemma.
\begin{Lemma}\label{LemP1}
The following relevant estimates with $\Gamma=\frac{2\gamma}{\gamma+1}$ hold $\p$-almost surely:
\begin{align*}
\vr_h \in L^{\infty}(0,T; L^{\gamma}(\mathbb T^3)),  &\quad \vm_h \in L^{\infty}(0,T; L^{\Gamma}(\mathbb T^3)), \quad  
\widetilde{\textnormal{div}_h} \,\vm_h \in L^{\infty}(0,T; W^{-1,\Gamma}(\mathbb T^3)), \\
&\Delta_h \vr_h \in L^{\infty}(0,T; W^{-2, \gamma}(\mathbb T^3)). \nonumber
\end{align*}
\end{Lemma}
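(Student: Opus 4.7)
The first two bounds follow immediately from the energy estimates of the preceding subsection. Indeed, the pathwise Sobolev bound \eqref{aprho} gives $\vr_h \in L^\infty(0,T;L^\gamma(\mathbb T^3))$ $\p$-a.s., and \eqref{estrhou2} gives $\vm_h \in L^\infty(0,T;L^{\Gamma}(\mathbb T^3))$ $\p$-a.s., where $\Gamma=2\gamma/(\gamma+1)$ arises from the Hölder interpolation $\vm_h = \sqrt{\vr_h}\cdot(\vm_h/\sqrt{\vr_h})$ combined with \eqref{aprhov} and \eqref{aprho}. So the only real work concerns the two negative-order discrete derivative norms.

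For $\widetilde{\divv_h}\,\vm_h$, the plan is to test against smooth functions and transfer the discrete derivatives onto the test function by summation by parts. Since $\widetilde{\divv_h}\,\vm_h$ is piecewise constant on the mesh, for any $\varphi\in C^\infty(\mathbb T^3)$ one has
\begin{equation*}
\int_{\mathbb T^3}(\widetilde{\divv_h}\,\vm_h)\,\varphi\,dx=\int_{\mathbb T^3}(\widetilde{\divv_h}\,\vm_h)\,\Pi_h\varphi\,dx,
\end{equation*}
and, invoking the central-difference definition $(\widetilde{\partial_h^p}w)_K=(w_L-w_J)/(2h)$ together with periodicity, a direct discrete summation by parts yields
\begin{equation*}
\int_{\mathbb T^3}(\widetilde{\divv_h}\,\vm_h)\,\Pi_h\varphi\,dx = -\int_{\mathbb T^3}\vm_h\cdot\widetilde{\nabla_h}(\Pi_h\varphi)\,dx.
\end{equation*}
Hölder's inequality then bounds the right-hand side by $\|\vm_h\|_{L^\Gamma}\|\widetilde{\nabla_h}\Pi_h\varphi\|_{L^{\Gamma'}}$, and the standard approximation estimate $\|\widetilde{\nabla_h}\Pi_h\varphi\|_{L^{\Gamma'}}\lesssim\|\varphi\|_{W^{1,\Gamma'}}$ (a consequence of the regularity of the admissible mesh in Definition~\ref{def:admissible mesh} and the fact that $\Pi_h$ is a cell-average projection) gives the desired $W^{-1,\Gamma}$ control, uniformly in $t$ thanks to the $L^\infty_t$ bound on $\vm_h$.

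The estimate for $\Delta_h\vr_h$ proceeds analogously but requires integrating by parts twice. Since $\Delta_h$ is symmetric with respect to the discrete inner product on piecewise constants, we obtain
\begin{equation*}
\int_{\mathbb T^3}(\Delta_h\vr_h)\,\varphi\,dx=\int_{\mathbb T^3}\vr_h\,\Delta_h(\Pi_h\varphi)\,dx,\qquad \varphi\in C^\infty(\mathbb T^3),
\end{equation*}
whence $|\langle\Delta_h\vr_h,\varphi\rangle|\le \|\vr_h\|_{L^\gamma}\|\Delta_h\Pi_h\varphi\|_{L^{\gamma'}}$. The missing ingredient is then the discrete second-order approximation estimate $\|\Delta_h\Pi_h\varphi\|_{L^{\gamma'}}\lesssim\|\varphi\|_{W^{2,\gamma'}}$, again a mesh-regularity statement. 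Combining with $\vr_h\in L^\infty(0,T;L^\gamma)$ yields the stated $W^{-2,\gamma}$ bound.

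\textbf{Expected obstacle.} The only nontrivial point is the pair of interpolation estimates $\|\widetilde{\nabla_h}\Pi_h\varphi\|_{L^{\Gamma'}}\lesssim\|\varphi\|_{W^{1,\Gamma'}}$ and $\|\Delta_h\Pi_h\varphi\|_{L^{\gamma'}}\lesssim\|\varphi\|_{W^{2,\gamma'}}$. These are essentially inverse/approximation inequalities for piecewise constant projections on quasi-uniform quadrilateral meshes and can be verified cell-by-cell using Taylor expansion together with the mesh regularity bound $\alpha h^3\le|K|\le\alpha^{-1}h^3$; they are by now standard in the finite volume literature on compressible flows (as used, e.g., in \cite{FLM}), so I would invoke them rather than rederive them. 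Everything else is just Hölder and summation by parts.
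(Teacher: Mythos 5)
Your proposal is correct and follows essentially the same route as the paper: the first two bounds are read off from the discrete energy estimates, and the negative-order bounds come from discrete summation by parts onto the (projected) test function followed by H\"older, with the discrete derivative of $\Pi_h\varphi$ controlled by $\|\varphi\|_{W^{1,\Gamma'}}$ (resp.\ $\|\varphi\|_{W^{2,\gamma'}}$). The only difference is presentational: the paper derives the approximation estimate for $\widetilde{\nabla_h}\Pi_h\varphi$ inline via the fundamental theorem of calculus on each cell, whereas you cite it as a standard mesh-regularity fact, which is an acceptable shortcut.
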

\begin{proof}
Note that first two estimates are direct consequences of discrete energy bounds. Next, we note that for any test function $\varphi \in W^{1,\Gamma'}(\mathbb T^3)$
\begin{align*}
\left< \widetilde{\textnormal{div}_h} \,\vm_h(t), \varphi \right> &= h^3 {\sum_{K\in\grid} \diht{\vm_h(t)}\dv{(\Pi_h\varphi)}} \\
&=-{\sum_{K\in\grid} \sum_{s=1}^3 m_K^s(t)\left(\intOhi{\frac{\varphi(x+h\bfe_s) -\varphi(x-h\bfe_s)}{2h}}\right)}\\
&=-{\sum_{K\in\grid} \sum_{s=1}^3 m_K^s(t)\left(\frac{1}{2h}\intOhi{\int_{x+h\bfe_s}^{x-h\bfe_s} \varphi_x(\theta)\,d\theta}\right)} \le \frac{1}{2h} {\sum_{K\in\grid} \sum_{s=1}^3 m_K^s(t)\left(\intOhi{h\varphi_x}\right)} \\
&\le \frac{h^{1+1/\Gamma}}{2h}{\sum_{K\in\grid} \sum_{s=1}^3 m_K^s(t)\left(\intOhi{\varphi^{\Gamma'}_x\,dx}\right)^{\frac{1}{\Gamma'}}}
\le C(\Gamma)\,\|\vm_h\|_{\gamma} \|\varphi\|_{W^{1,\Gamma'}(\mathbb T^3)}.
\end{align*}
This confirms the third estimate. A similar argument yields the result for the discrete Laplacian.
\end{proof}
\end{itemize}


\section{Consistency of the Numerical Scheme}
\label{consistency}
In this section we show consistency of the entropy stable finite volume scheme. In addition, we also exhibit consistency of the energy inequality.

\subsection{Consistency formulation of continuity and momentum equations}

We begin by multiplying the continuity equation \eqref{cont_bE} by $h^3\dv{(\Pi_h\varphi)},$ with $\varphi \in C^3(\mathbb T^3),$ and the momentum equation or \eqref{mom_bE} by $h^3\dv{(\Pi_h\boldsymbol{\varphi})},$ with  $\boldsymbol{\varphi} \in C^3(\T^3;\R^3)$. Then we sum the resulting equations over $K\in\grid$ and integrate in time. For time derivatives in the continuity and momentum equations, it is straightforward to observe that
\begin{align*}
h^3\intOT{\sum_{K\in\grid} \D{\dv{\vr}(t)} \,\dv{(\Pi_h\varphi)}}&=\intOT{\D{} \bigg(\intO{\vr_h(t)\varphi(x)} \bigg)} = \left< \vr_h(T), \varphi \right> - \left< \vr_h(0), \varphi \right>, \\
h^3\intOT{\sum_{K\in\grid} \D {\dv{\vm}(t)}\cdot\dv{(\Pi_h\boldsymbol{\varphi})}} &=\intOT{\D{} \bigg( \intO{\vm_h(t)\cdot \boldsymbol{\varphi}(x)}\bigg)} 
=\left< \vm_h(T), \boldsymbol{\varphi} \right> - \left< \vm_h(0), \boldsymbol{\varphi} \right>.
\end{align*}

\noindent To handle the convective terms, we shall make use of the discrete integration by parts and the Taylor expansion. For the continuity equation, we have
\begin{align*}
&h^3\intOT{\sum_{K\in\grid} \diht{\vm_h(t)}\dv{(\Pi_h\varphi)}}& \\
&=-\intOT{\sum_{K\in\grid} \sum_{s=1}^3 m_K^s(t)\left(\intOhi{\frac{\varphi(x+h\bfe_s) -\varphi(x-h\bfe_s)}{2h}}\right)}
=-\intOT{\intO{\vm_h(t)\cdot \grad\varphi(x)}}+\mathcal{R}_1(h,\varphi),&
\end{align*}
where
term $\mathcal{R}_1(h,\varphi)$ is estimated as follows
\begin{align}\label{r1}
\mathcal{R}_1(h,\varphi)\le C(\varphi) h \n{L_t^{\infty}L^1_x}{\vm_h}, \, \p \,\,\mbox{a.s.}
\end{align}
Similarly, for the convective term in the momentum equations, we have
\begin{align*}
&h^3\intOT{\sum_{K\in\grid} \diht{\bigg(\frac{\vm_h(t)\otimes\vm_h(t)}{\vr_h(t)}+p_h(t)\mathbb{I}\bigg)}\dv{(\Pi_h\boldsymbol{\varphi})}}& \\
&=-\intOT{\sum_{K\in\grid} \sum_{s=1}^3\sum_{z=1}^3\bigg(\frac{m_h^s(t)m_h^z(t)}{\vr_h(t)}+p_h(t)\bigg)\left(\intOhi{\frac{\varphi^z(x+h\bfe_s) -\varphi^z(x-h\bfe_s)}{2h}}\right)}&\\
&=-\intOT{\intO{\bigg(\frac{\vm_h(t)\otimes\vm_h(t)}{\vr_h(t)}+p_h(t)\mathbb{I}\bigg)\cdot\grad\boldsymbol{\varphi}(x)}}+\mathcal{R}_2(h,\bm{\varphi}),&
\end{align*}
where the term $\mathcal{R}_2(h,\bm{\varphi})$ is bounded by
\begin{align*}
\mathcal{R}_2(h,\bm{\varphi})
\le C(\bm{\varphi}) h \Bigg\{\n{L_t^{\infty}L_x^2}{\sqrt{\vr_h(t)}\vu_h(t)}+\n{L_t^{\infty}L^1_x}{p_h(t)}\Bigg\}, \, \p \,\,\mbox{a.s.}
\end{align*}

\noindent Next, regarding the numerical diffusion term with global numerical diffusion coefficients $\lambda$, we have
\begin{align*}
& h^{4}\intOT{ \lambda\sum_{K\in\grid} \laph{\vU_h(t)}\dv{(\Pi_h\boldsymbol{\varphi})}}& \nonumber \\ \nonumber
& = h^{4}\intOT{\lambda\sum_{K\in\grid} \dv{\vU}(t)\left(\intOhi{\sum_{s=1}^N\frac{\boldsymbol{\varphi}(x+h\bfe_s) -2\boldsymbol{\varphi}(x) +\boldsymbol{\varphi}(x-h\bfe_s)}{h^2}}\right)}& \\ 
&= h^3\intOT{\lambda\intO{\vU_h(t)\Delta{\boldsymbol{\varphi}(x)}}} + \mathcal{N}(h,\bm{\varphi}),
\end{align*}
where the term $\mathcal{N}(h,\bm{\varphi})$ is bounded by
\begin{align*}
\mathcal{N}(h,\bm{\varphi}) \le C(\bm{\varphi})   h   \n{L_t^{\infty}L_x^{1}}{\vU_h}\intOT{\lambda}, \, \p \,\,\mbox{a.s.}
\end{align*}
Finally, regarding the stochastic term, we have the following
\begin{align*}
 h^3\int_0^ T\sum_{K\in\grid}  \Psi\big(\vrh(t), \vm_K(t)\big)\,  (\Pi_h \bm{\varphi})_K \,\D W(t) = \int_0^{T}\langle\Psi(\varrho_{h},\mathbf{m}_{h}), \bm{\varphi}\rangle\,\mathrm{d}W(t)
\end{align*}

\noindent Let us summarize the consistency results derived in this section.

\subsubsection*{Consistency formulation for the stochastic Euler system}

The consistency formulation of the numerical schemes \eqref{scheme_bE} for the barotropic Euler equations reads

\begin{enumerate}
\item for all $\varphi\in C^\infty(\mathbb{T}^3)$ and $\bm{\varphi}\in C^\infty(\mathbb{T}^3)$ we have $\mathbb{P}$-a.s. for all $t\in[0,T]$
\begin{align}
\langle \varrho_{h}(t), \varphi\rangle &= \langle\varrho_{h}(0) , \varphi\rangle - \int_0^{t}\langle\mathbf{m}_{h}, \nabla \varphi\rangle\mathrm{d}s + \lambda\,h^3 \int_0^t  \langle \varrho_{h}(t), \Delta \varphi\rangle \,ds+ \mathcal{R}_1(h,\varphi) + \mathcal{N}_1(h,\varphi) \label{eq:energy}
\\
\langle \mathbf{m}_{h}(t), \bm\varphi\rangle &= \langle \mathbf{m}_{h} (0), \bm{\varphi}\rangle - \int_0^{t}\bigg\langle \Big(\frac{\vm_h\otimes\vm_h}{\vr_h}+p(\varrho_{h}) \mathbb{I}\Big), \nabla \bm{\varphi}\bigg\rangle\,\mathrm{d}s + \lambda\,h^3 \int_0^t  \langle \mathbf{m}_{h}(t), \Delta  \bm{\varphi}\rangle \,ds \notag \\
&+\int_0^{t}\langle\Psi(\varrho_{h},\mathbf{m}_{h}), \bm{\varphi}\rangle\,\mathrm{d}W + \mathcal{R}_2(h,\bm{\varphi}) + \mathcal{N}_2(h,\bm{\varphi}). \label{eq:energy1}
\end{align}
\item the energy inequality\index{energy inequality} 
\begin{align}\label{EI2''}
\begin{aligned}
-\int_0^T \partial_t \psi \int_{\mathbb T^3} &\bigg[ \frac{|{\bf m}_{h}|^2}{\varrho_{h}}  + P(\varrho_{h}) \bigg] \,dx \,ds
\leq \psi(0) \int_{\mathbb T^3} \bigg[ \frac{1}{2}\frac{|{\bf m}_{h}(0)|^2}{\varrho_{h}(0)}  + P(\varrho_{h}(0))\bigg] \dx \\
&\qquad +\sum_{k=1}^\infty\int_0^T \psi \bigg(\int_{\mathbb T^3}\Psi_k (\varrho_{h}, {\bf m}_{h})\cdot{\bf u}_{h}\dx\bigg){\rm d} W_k + \frac{1}{2}\sum_{k = 1}^{\infty}  \int_0^T \psi
\int_{\mathbb T^3} \varrho_{h}^{-1}| \Psi_k (\varrho_{h}, {\bf m}_{h}) |^2 \,dx\, {\rm d}s.
\end{aligned}
\end{align}
holds $\mathbb P$-a.s., for all $\psi \in C_c^{\infty}([0,T))$, $\psi \ge 0$.
\end{enumerate}
Here
\begin{equation} \label{error}
\begin{aligned}
\mathcal{R}_1(h,\varphi)&:= \intOT{\sum_{K\in\grid} \sum_{s=1}^3 m_K^s(t)\intOhi{\left(\nabla \varphi -\frac{\varphi(x+h\bfe_s) -\varphi(x-h\bfe_s)}{2h}\right)}}, \\
\mathcal{R}_2(h,\bm{\varphi})&:=\intOT{\sum_{K\in\grid} \sum_{s=1}^3\sum_{z=1}^3\bigg(\frac{m_h^s(t)m_h^z(t)}{\vr_h(t)}+p_h(t)\bigg)\intOhi{\left( \nabla  \bm{\varphi} - \frac{\varphi^z(x+h\bfe_s) -\varphi^z(x-h\bfe_s)}{2h}\right)}}, \\
\mathcal{N}_1(h,\varphi)&:=\lambda\,h\intOT{\sum_{K\in\grid} \dv{\vr}(t)\intOhi{\sum_{s=1}^3 \left(\Delta \varphi- \frac{{\varphi}(x+h\bfe_s) -2{\varphi}(x) +{\varphi}(x-h\bfe_s)}{h^2}\right)}}, \\
\mathcal{N}_2(h,\bm{\varphi})&:=\lambda\,h\intOT{\sum_{K\in\grid}\sum_{s=1}^3 m_K^s(t) \intOhi{\left(\Delta \boldsymbol{\varphi}- \frac{\boldsymbol{\varphi}(x+h\bfe_s) -2\boldsymbol{\varphi}(x) +\boldsymbol{\varphi}(x-h\bfe_s)}{h^2}\right)}}.
\end{aligned}
\end{equation}


\section{Proof of Theorem~\ref{thm:exist}: Existence of Measure-Valued Solution}
\label{proof1}
We shall make use of the given a-priori estimates \eqref{aprhov}, \eqref{aprho}, and \eqref{estrhou2} to pass to the limit in the parameter $h$. In what follows, we begin by the following compactness argument.

\subsection{Compactness and almost sure representations}
\label{subsec:compactness}

Note that, in general, securing a result of compactness in the probability variable ($\omega$-variable) is a non-trivial task. To that context, to obtain strong (a.s.) convergence in the $\omega$-variable, we make use of Skorokhod-Jakubowski's representation theorem (cf. \cite{Jakubowski}). We remark that the classical Skorokhod representation theorem does not work in our setup since our path spaces are not Polish spaces. To overcome this, we use Jakubowski version of Skorokhod representation theorem \cite{Jakubowski} which works for quasi-Polish spaces.

As usual, to establish the tightness of the laws generated by the approximations, we first denote the path space $\mathcal{Y}$ to be the product of the following spaces:
\begin{align*}
\mathcal{Y}_\varrho&=C_w([0,T];L^\gamma(\mt)),&\mathcal{Y}_N&=C([0,T]; \R),\\
\mathcal{Y}_{\textbf{m}}&=C_w([0,T];L^\frac{2\gamma}{\gamma+1}(\mt)),&\mathcal{Y}_W&=C([0,T];\mathfrak{W}_0),\\
\mathcal{Y}_{\mathcal{V}} &= \big(L^{\infty}((0,T)\times \T^3; \mathcal{P}(\R^4)), w^* \big).
\end{align*}
Let us denote by $\mu_{\varrho_h}$, $\mu_{\textbf{m}_h}$, and $\mu_{W_h}$ respectively, the law of $\varrho_h$, $\textbf{m}_h$, and $W_h$ on the corresponding path space. Moreover, let $\mu_{{\mathcal{V}}_h}$, and $\mu_{N_h}$ denote the law of ${\mathcal{V}}_h := \delta_{[\varrho_h, \textbf{m}_h]}$, and $N_h:=\sum_{k\geq1}\int_0^t\intTor{ \vu_{h} \cdot {\Psi_k}(\varrho_h, \textbf{m}_h ) } \,\Dif W$ on the corresponding path spaces. Finally, let $\mu^h$ denotes joint law of all the variables on $\mathcal{Y}$. To proceed further, it is necessary to establish tightness of $\{\mu^h;\,h\in(0,1)\}$. To this end, we observe that tightness of $\mu_{W_\ep}$ is immediate. So we show tightness of other variables.

\begin{Proposition}\label{prop:rhotight}
	The sets $\{\mu_{\varrho_h};\,h\in(0,1)\}$, and $\{\mu_{\textbf{m}_h};\,h\in(0,1)\}$ are tight on path spaces $\mathcal{Y}_\varrho$, $\mathcal{Y}_\bu$, and $\mathcal{Y}_{\textbf{m}}$ respectively. 
\end{Proposition}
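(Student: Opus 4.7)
The plan is to establish tightness by showing, uniformly in $h$, boundedness of the processes in the target Lebesgue spaces combined with Hölder-type regularity in a suitable negative Sobolev norm, and then to invoke a standard compactness embedding yielding tightness in $C_w$. The key reference is the fact that bounded subsets of $L^\infty(0,T; L^p(\mt))$ whose elements are uniformly $\alpha$-Hölder continuous into some $W^{-\ell,2}(\mt)$ are relatively compact in $C_w([0,T]; L^p(\mt))$; combining this with a Markov/Chebyshev argument on the moments yields tightness.

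\textbf{Tightness of $\mu_{\varrho_h}$.} From the energy bound \eqref{aprho} we have $\E\|\varrho_h\|_{L^\infty(0,T;L^\gamma(\mt))}^p\le C$ uniformly in $h$. For the time regularity I would use the consistency formulation \eqref{eq:energy}: for any $\varphi\in C^\infty(\mt)$ and $0\le s<t\le T$,
\begin{equation*}
\langle \varrho_h(t)-\varrho_h(s),\varphi\rangle = -\int_s^t\langle \vm_h,\nabla\varphi\rangle\,du + \lambda h^3\int_s^t\langle \varrho_h,\Delta\varphi\rangle\,du + \text{(consistency errors)}.
\end{equation*}
Using Lemma~\ref{LemP1} together with \eqref{estrhou2} and the bounds on $\mathcal{R}_1,\mathcal{N}_1$ from \eqref{error} (which are $O(h)$), I can choose $\ell>3/2$ large enough that $W^{\ell,2}(\mt)\hookrightarrow C^2(\mt)$ and conclude
\begin{equation*}
\|\varrho_h(t)-\varrho_h(s)\|_{W^{-\ell,2}(\mt)}\le C(|t-s|+h)\Bigl(\|\vm_h\|_{L^\infty_t L^\Gamma_x}+\|\varrho_h\|_{L^\infty_t L^\gamma_x}\Bigr)\quad\p\text{-a.s.}
\end{equation*}
Taking $p$-th moments gives uniform bounds in $W^{1,p}(0,T;W^{-\ell,2}(\mt))$. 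Then the set
\begin{equation*}
K_R := \bigl\{\vr\in C_w([0,T];L^\gamma(\mt))\,:\,\|\vr\|_{L^\infty_t L^\gamma_x}\le R,\ \|\vr\|_{C^{1/2}([0,T];W^{-\ell,2}(\mt))}\le R\bigr\}
\end{equation*}
is relatively compact in $C_w([0,T];L^\gamma(\mt))$ (bounded uniformly continuous family taking values in balls of a reflexive space with equicontinuity against a dense set of test functions, by Arzelà–Ascoli), and Chebyshev's inequality makes $\p(\varrho_h\in K_R)\to 1$ as $R\to\infty$ uniformly in $h$.

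\textbf{Tightness of $\mu_{\vm_h}$.} This is analogous but uses the stochastic part of \eqref{eq:energy1}, which is where the main work lies. The deterministic part gives Lipschitz-in-time regularity in $W^{-\ell,2}(\mt)$ exactly as above, using boundedness of $\vm_h\otimes\vm_h/\vr_h$ and $p(\vr_h)$ in suitable spaces (here I need the Lipschitz continuity and consistency of the numerical flux to control $\mathcal{R}_2,\mathcal{N}_2$). For the stochastic term I would invoke the Burkholder–Davis–Gundy inequality: for $p\ge 2$,
\begin{equation*}
\E\Bigl\|\int_s^t\Psi(\vr_h,\vm_h)\,\Dif W\Bigr\|_{W^{-\ell,2}(\mt)}^p \le C_p\,\E\Bigl(\int_s^t\|\Psi(\vr_h,\vm_h)\|_{L_2(\mathfrak{W};W^{-\ell,2}(\mt))}^2\,du\Bigr)^{p/2}.
\end{equation*}
By the assumptions \eqref{FG1}–\eqref{FG2}, $|\Psi_k(\cdot,\vr_h,\vm_h)|\le \beta_k(|\vr_h|+|\vm_h|)$ pointwise, so $\sum_k\|\Psi_k(\cdot,\vr_h,\vm_h)\|_{W^{-\ell,2}}^2\lesssim(\sum_k\beta_k^2)(\|\vr_h\|_{L^1}^2+\|\vm_h\|_{L^1}^2)$. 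Combined with \eqref{aprho}, \eqref{estrhou2}, this yields
\begin{equation*}
\E\bigl\|\vm_h(t)-\vm_h(s)\bigr\|_{W^{-\ell,2}(\mt)}^p \le C|t-s|^{p/2},
\end{equation*}
so by the Kolmogorov continuity criterion (Lemma~\ref{lemma01}), $\vm_h$ admits a modification that is $\alpha$-Hölder in $W^{-\ell,2}(\mt)$ for any $\alpha<1/2-1/p$, with uniform moment bounds. Together with the uniform $L^\infty(0,T;L^{2\gamma/(\gamma+1)})$ bound \eqref{estrhou2}, the same Arzelà–Ascoli-type argument now produces compact subsets $K_R\subset\mathcal{Y}_{\vm}$ with $\p(\vm_h\in K_R)\to 1$ uniformly in $h$.

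The main technical obstacle is ensuring that all bounds are truly uniform in $h$ despite the piecewise-constant nature of the approximation: the consistency errors $\mathcal{R}_i,\mathcal{N}_i$ must be controlled solely in terms of the a priori bounds \eqref{aprhov}–\eqref{estrhou2} and factors of $h$ (hence vanishing), and the BDG estimate must be applied in a negative Sobolev norm chosen large enough that $\Psi(\vr_h,\vm_h)$ is uniformly Hilbert–Schmidt valued there — this is precisely why $\ell>3/2$ is taken.
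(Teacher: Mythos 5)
Your argument is correct and is essentially the standard tightness proof that the paper defers to \cite{BrFeHobook}: uniform moment bounds in $L^\infty(0,T;L^\gamma)$ resp.\ $L^\infty(0,T;L^{2\gamma/(\gamma+1)})$, time equicontinuity in a negative Sobolev space $W^{-\ell,2}(\mt)$ with $\ell>3/2$ (deterministic terms by direct estimation, the stochastic integral by Burkholder--Davis--Gundy plus the Kolmogorov criterion), and then compactness in $C_w$ via Arzel\`a--Ascoli and Chebyshev. One small correction: the increment bound for $\varrho_h$ should read $C\,|t-s|$ rather than $C(|t-s|+h)$ --- the consistency errors $\mathcal{R}_1,\mathcal{N}_1$ restricted to $[s,t]$ are time integrals of quantities bounded by $C(\varphi)\,h\,\|\vm_h\|_{L^\infty_tL^1_x}$ and hence are $O(h\,|t-s|)$, which is needed because an additive $+h$ term would destroy the uniform H\"older seminorm bound defining your compact sets $K_R$.
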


\begin{proof}
Proof of this proposition is straightforward, by making use of the a priori bounds given in Lemma~\ref{LemP1}. For the details of this proof, we refer to \cite{BrFeHobook}. 
\end{proof}

\begin{Proposition}\label{rhoutight1}
	The set $\{\mu_{{\mathcal{V}}_{h}};\,h\in(0,1)\}$ is tight on the path space  $\mathcal{Y}_{\mathcal{V}}$.
\end{Proposition}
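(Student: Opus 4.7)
The plan is to reduce the tightness of $\{\mu_{\mathcal{V}_h}\}$ on $\mathcal{Y}_{\mathcal{V}}$ to an application of the Banach--Alaoglu theorem combined with uniform moment estimates for the sequence $\{(\varrho_h,{\bf m}_h)\}$ provided by the discrete energy bounds. Since the Young measures under consideration are Dirac, $\mathcal{V}_h = \delta_{[\varrho_h,{\bf m}_h]}$, the bounds \eqref{aprho}, \eqref{estrhou2} yield $\p$-a.s.\ that
\begin{equation*}
\bigl\|\langle \mathcal{V}_h ; |\varrho|^\gamma + |{\bf m}|^{2\gamma/(\gamma+1)}\rangle\bigr\|_{L^1((0,T)\times\T^3)} = \|\varrho_h\|^\gamma_{L^\gamma_{t,x}} + \|{\bf m}_h\|^{2\gamma/(\gamma+1)}_{L^{2\gamma/(\gamma+1)}_{t,x}},
\end{equation*}
and taking expectations produces a uniform-in-$h$ $L^p(\Omega)$ bound for every $p\in[1,\infty)$.

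The core analytic fact I would invoke (cf.\ Breit--Feireisl--Hofmanov\'a, \cite{BrFeHobook}) is that for each $R>0$ the sub-level set
\begin{equation*}
K_R := \Bigl\{ \mathcal{V}\in L^\infty_{w^*}((0,T)\times\T^3;\mathcal{P}(\R^4)) \,:\, \bigl\|\langle \mathcal{V}; |\varrho|^\gamma + |{\bf m}|^{2\gamma/(\gamma+1)}\rangle\bigr\|_{L^1((0,T)\times\T^3)} \leq R \Bigr\}
\end{equation*}
is relatively compact and metrizable in $\mathcal{Y}_{\mathcal{V}}$. Compactness follows from Banach--Alaoglu applied to the predual $L^1((0,T)\times\T^3;C_0(\R^4))$: the total-variation constraint keeps every sequence in the unit ball of a separable Banach space dual, so bounded sequences admit weak-$*$ convergent subsequences, and the moment bound prevents any loss of mass at infinity in the limit, ensuring the limiting object is again a parametrized family of genuine probability measures. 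Separability of the predual delivers metrizability on the unit ball.

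Once $K_R$ is recognised as a compact subset of $\mathcal{Y}_{\mathcal{V}}$, tightness is an immediate consequence of Chebyshev's inequality: given $\eta>0$, I would choose $R=R(\eta)$ large enough that
\begin{equation*}
\p\bigl[\mathcal{V}_h \notin K_R\bigr] \leq \frac{1}{R}\,\E\Bigl[\bigl\|\langle \mathcal{V}_h; |\varrho|^\gamma + |{\bf m}|^{2\gamma/(\gamma+1)}\rangle\bigr\|_{L^1}\Bigr] \leq \frac{C}{R} < \eta,
\end{equation*}
uniformly in $h\in(0,1)$, which is precisely the definition of tightness of $\{\mu_{\mathcal{V}_h}\}$ on $\mathcal{Y}_{\mathcal{V}}$.

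The conceptually delicate step is the compactness/metrizability of $K_R$; the rest is bookkeeping. The subtlety is to rule out that mass in the Young measure $\mathcal{V}_h$ escapes to infinity in the phase variable under weak-$*$ limits---this is exactly where the strict super-linear growth ($\gamma>1$ and $2\gamma/(\gamma+1)>1$) of the test functions in the moment bound is used, as it supplies uniform integrability of the constant $1$ against the measures $\mathcal{V}_h$. Since we are not actually extracting a limit here (only establishing tightness), we do not need to identify the limiting Young measure at this stage; that identification is deferred to the Skorokhod--Jakubowski step in the subsequent subsection.
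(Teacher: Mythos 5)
Your proposal is correct and follows essentially the same route as the paper: both define the sub-level set of Young measures with bounded $\gamma$- and $\tfrac{2\gamma}{\gamma+1}$-moments, invoke its relative weak-$*$ compactness in $L^\infty_{w^*}((0,T)\times\T^3;\mathcal{P}(\R^4))$, and conclude by Chebyshev using the Dirac structure $\mathcal{V}_h=\delta_{[\varrho_h,\mathbf{m}_h]}$ together with the energy bounds. You merely supply more detail (Banach--Alaoglu on the predual, non-escape of mass, metrizability) where the paper simply cites the compactness criterion.
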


\begin{proof}
	The aim is to apply the compactness criterion in $\big(L^{\infty}((0,T)\times \T^3; \mathcal{P}(\R^4)), w^* \big)$. Define the set
	\begin{align*}
	B_R:= \Big\lbrace {\mathcal{V}} \in \big(L^{\infty}((0,T)\times \T^3; \mathcal{P}(\R^4)), w^* \big); 
	\int_0^T \int_{\T^3} \int_{\R^4} \Big(|\xi_1|^{\gamma} + |\xi_2|^{\frac{2\gamma}{\gamma+1}} \Big) \,d{\mathcal{V}}_{t,x}(\xi)\,dx\,dt \le R    \Big\rbrace,
	\end{align*}
	which is relatively compact in $\big(L^{\infty}((0,T)\times \T^3; \mathcal{P}(\R^4)), w^* \big)$. Note that
	\begin{align*}
	\mathcal{L}[{\mathcal{V}}_{h}](B^c_R)&=
	\p\Bigg( \int_0^T \int_{\T^3} \int_{\R^4} \Big(|\xi_1|^{\gamma} + |\xi_2|^{\frac{2\gamma}{\gamma+1}} \Big) \,d{\mathcal{V}}_{t,x}(\xi)\,dx\,dt > R  \Bigg) \\
	&= \p\Bigg(\int_0^T \int_{\T^3} \Big(|\varrho_h|^{\gamma} + |\textbf{m}_{h}|^{\frac{2\gamma}{\gamma+1}} \Big) \,dx\,dt >R \Bigg)
	\le \frac1R \E\Big[\|\varrho_h \|_{L^\gamma}^{\gamma} + \| \textbf{m}_{h}\|_{L^\frac{2\gamma}{\gamma+1}}^{\frac{2\gamma}{\gamma+1}}\Big] \le \frac CR.
	\end{align*}
	The proof is complete.
\end{proof}

\begin{Proposition}\label{rhoutight13}
	The set $\{\mu_{N_{h}};\,h\in(0,1)\}$ is tight on the path space $\mathcal{Y}_{N}$.
\end{Proposition}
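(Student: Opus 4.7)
The plan is to invoke the Kolmogorov continuity criterion (Lemma~\ref{lemma01}) on the real-valued processes $N_h$ and then deduce tightness in $C([0,T];\R)$ from a uniform moment bound on H\"older seminorms together with the initial value $N_h(0)=0$. Since $N_h$ is a genuine It\^o stochastic integral with respect to the cylindrical Wiener process $W$, the natural tool is the Burkholder--Davis--Gundy (BDG) inequality applied to the martingale increment $N_h(t)-N_h(s)$.

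First I would write, for $0\le s\le t\le T$ and any $a\ge 2$,
\begin{equation*}
\E\bigl|N_h(t)-N_h(s)\bigr|^{a}
\le C_a\,\E\Bigl(\int_s^t \sum_{k\ge 1}\Bigl|\int_{\T^3} \vu_h\cdot\Psi_k(\varrho_h,\vm_h)\dx\Bigr|^2\,\dif r\Bigr)^{a/2}.
\end{equation*}
Using Cauchy--Schwarz in $x$ with the splitting $\vu_h\cdot\Psi_k=(\sqrt{\varrho_h}\vu_h)\cdot(\varrho_h^{-1/2}\Psi_k)$ gives
\begin{equation*}
\Bigl|\int_{\T^3}\vu_h\cdot\Psi_k\dx\Bigr|^2
\le \Bigl(\int_{\T^3}\varrho_h|\vu_h|^2\dx\Bigr)\Bigl(\int_{\T^3}\varrho_h^{-1}|\Psi_k|^2\dx\Bigr).
\end{equation*}
The growth hypothesis \eqref{FG1}--\eqref{FG2} implies $|\Psi_k(x,\varrho,\vm)|\le\beta_k(\varrho+|\vm|)$ and hence
\begin{equation*}
\sum_{k\ge 1}\varrho_h^{-1}|\Psi_k(\varrho_h,\vm_h)|^2
\le C\Bigl(\sum_{k\ge 1}\beta_k^2\Bigr)\bigl(\varrho_h+\varrho_h|\vu_h|^2\bigr).
\end{equation*}
Since $\sum_k\beta_k<\infty$ implies $\sum_k\beta_k^2<\infty$, the total mass is conserved, and the kinetic energy $\int_{\T^3}\varrho_h|\vu_h|^2\dx$ is uniformly controlled $\prst$-a.s.\ in $L^\infty_t$ by \eqref{aprhov}, I obtain a deterministic-in-time constant $C$ independent of $h$ so that
\begin{equation*}
\sum_{k\ge 1}\Bigl|\int_{\T^3}\vu_h\cdot\Psi_k\dx\Bigr|^2
\le C\bigl(1+\n{L^\infty_tL^2_x}{\sqrt{\varrho_h}\vu_h}^{2}\bigr)^{2}.
\end{equation*}
Plugging this into the BDG bound and taking $a>2$ large enough, the energy estimates \eqref{aprhov}--\eqref{estrhou2} yield
\begin{equation*}
\E\bigl|N_h(t)-N_h(s)\bigr|^{a}\le C_a\,|t-s|^{a/2},
\end{equation*}
uniformly in $h\in(0,1)$.

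The next step is to apply the Kolmogorov criterion of Lemma~\ref{lemma01} with $b=a/2-1>0$: for every $c\in\bigl[0,\tfrac12-\tfrac1a\bigr)$, there exists a continuous modification of $N_h$ (which in fact coincides with the continuous version of the stochastic integral) with
\begin{equation*}
\E\,\n{C^{c}([0,T])}{N_h}^{a}\le K,
\end{equation*}
uniformly in $h$. Tightness of $\{\mu_{N_h}\}$ on $C([0,T];\R)$ then follows from the standard compact embedding $C^{c}([0,T])\hookrightarrow\hookrightarrow C([0,T];\R)$ combined with $N_h(0)=0$: the sets $K_R=\{f\in C([0,T];\R):\,f(0)=0,\,\n{C^{c}}{f}\le R\}$ are compact in $C([0,T];\R)$ and $\prst(N_h\notin K_R)\le R^{-a}K\to 0$ as $R\to\infty$. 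No step is truly hard; the one to treat carefully is the bound on $\sum_k\varrho_h^{-1}|\Psi_k|^2$, which is where the structural assumptions \eqref{FG1}--\eqref{FG2} and the a~priori energy bound \eqref{aprhov} combine to produce an $h$-independent estimate.
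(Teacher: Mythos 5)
Your proposal is correct and follows essentially the same route as the paper: a BDG/martingale moment estimate giving $\E|N_h(t)-N_h(s)|^a\le C|t-s|^{a/2}$ uniformly in $h$, followed by the Kolmogorov criterion of Lemma~\ref{lemma01} and the compact embedding of a H\"older space into $C([0,T];\R)$. The only difference is that you spell out the Cauchy--Schwarz splitting and the growth bound on $\sum_k\varrho_h^{-1}|\Psi_k|^2$ that the paper leaves implicit, which is a welcome clarification rather than a deviation.
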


\begin{proof}
	First observe that, for each $h$, $N_h(t)=\sum_{k\geq1}\int_0^t\intTor{ \vu_{h} \cdot {\Psi_k}(\varrho_h, \textbf{m}_h ) } \,\Dif W$ is a square integrable martingale. Note that for $r > 2$
	\begin{align*}
	\E\Big[ \Big|\sum_{k\ge 1} \int_s^t \int_{\T^3}\vu_{h} \cdot {\Psi_k}(\varrho_h, \textbf{m}_h )\Big|^r  \Big] &\le \E\Big[ \int_s^t \sum_{k=1}^{\infty} \Big|\int_{\T^3}\vu_{h} \cdot {\Psi_k}(\varrho_h, \textbf{m}_h)\Big|^2  \Big]^{r/2} \\
	& \le |t-s|^{r/2}\, \Big(1 + \E \Big[\sup_{0\le t \le T} \| \sqrt{\varrho_h} u_{h}\|^r_{L^2} \Big]\Big)
	\le C|t-s|^{r/2},
	\end{align*}
	and the Kolmogorov continuity criterion i.e., Lemma~\ref{lemma01} applies. This in particular implies that, for some $\alpha>1$
	$$
	\sum_{k\geq1}\int_0^t\intTor{ \vu_{h} \cdot {\Psi_k}(\varrho_h, \textbf{m}_h ) } \,\Dif W \in L^r(\Omega; C^{\alpha}(0,T; \R)).
	$$
	Therefore, tightness of law follows from the compact embedding of $C^{\alpha}$ into $C^0$.
\end{proof}

Combining all the informations obtained from Proposition~\ref{prop:rhotight},  Proposition~\ref{rhoutight1}, and Proposition~\ref{rhoutight13}, we conclude that
\begin{Corollary}
	The set $\{\mu^h;\,h\in(0,1)\}$ is tight on $\mathcal{Y}$. 
\end{Corollary}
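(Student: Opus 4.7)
The plan is to reduce the joint tightness to the marginal tightness already established. Since $\mathcal{Y}$ is defined as the Cartesian product of the five path spaces $\mathcal{Y}_\varrho$, $\mathcal{Y}_{\textbf{m}}$, $\mathcal{Y}_W$, $\mathcal{Y}_{\mathcal{V}}$, $\mathcal{Y}_N$, and each of these has been shown (or is trivially known, in the case of $W_h$) to carry a tight family of laws under $\{\mu^h\}_{h \in (0,1)}$, the joint tightness follows from a routine product-space argument.

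More precisely, I would proceed as follows. Fix $\varepsilon > 0$. By Proposition \ref{prop:rhotight} there exist compact sets $K^\varrho_\varepsilon \subset \mathcal{Y}_\varrho$ and $K^{\textbf{m}}_\varepsilon \subset \mathcal{Y}_{\textbf{m}}$ such that $\mu_{\varrho_h}((K^\varrho_\varepsilon)^c) < \varepsilon/5$ and $\mu_{\textbf{m}_h}((K^{\textbf{m}}_\varepsilon)^c) < \varepsilon/5$ uniformly in $h$. Since $W_h = W$ has a fixed law on $C([0,T];\mathfrak{W}_0)$, its law is trivially tight, so there exists a compact $K^W_\varepsilon \subset \mathcal{Y}_W$ with $\mu_{W_h}((K^W_\varepsilon)^c) < \varepsilon/5$. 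Propositions \ref{rhoutight1} and \ref{rhoutight13} likewise produce compact sets $K^{\mathcal{V}}_\varepsilon \subset \mathcal{Y}_{\mathcal{V}}$ and $K^N_\varepsilon \subset \mathcal{Y}_N$ with the analogous uniform bounds.

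Now set $K_\varepsilon \defeq K^\varrho_\varepsilon \times K^{\textbf{m}}_\varepsilon \times K^W_\varepsilon \times K^{\mathcal{V}}_\varepsilon \times K^N_\varepsilon$. This is a compact subset of $\mathcal{Y}$, either by Tychonoff's theorem or directly because each factor is compact. The complement of $K_\varepsilon$ in $\mathcal{Y}$ is contained in the union of the five cylindrical sets whose $j$-th projection is $(K^{(j)}_\varepsilon)^c$, so by subadditivity
\[
\mu^h(K_\varepsilon^c) \le \mu_{\varrho_h}((K^\varrho_\varepsilon)^c) + \mu_{\textbf{m}_h}((K^{\textbf{m}}_\varepsilon)^c) + \mu_{W_h}((K^W_\varepsilon)^c) + \mu_{\mathcal{V}_h}((K^{\mathcal{V}}_\varepsilon)^c) + \mu_{N_h}((K^N_\varepsilon)^c) < \varepsilon,
\]
uniformly in $h \in (0,1)$. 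This proves the asserted tightness.

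There is no real obstacle here; the previous propositions have done all the work. The only point that deserves a second look is verifying that the compactness criterion in the non-metrisable space $\mathcal{Y}_{\mathcal{V}} = (L^\infty_{w^*}((0,T)\times\T^3;\mathcal{P}(\R^4)), w^*)$ is genuinely compatible with the product tightness argument — this is exactly the reason the paper invokes the Skorokhod--Jakubowski representation (Theorem \ref{skorokhod}) rather than the classical Skorokhod theorem in the subsequent passage-to-the-limit step, but for the tightness statement itself the argument sketched above suffices.
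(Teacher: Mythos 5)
Your proposal is correct and is essentially the argument the paper has in mind: the paper simply asserts that the corollary follows by ``combining'' the marginal tightness results of Propositions~\ref{prop:rhotight}, \ref{rhoutight1} and \ref{rhoutight13} together with the trivial tightness of the law of $W$, and your product-of-compacts plus subadditivity argument is the standard justification of that step. The only cosmetic remark is that Proposition~\ref{rhoutight1} produces a \emph{relatively} compact set $B_R$, so one should take its closure before forming the product, but this changes nothing.
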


At this point, we are ready to apply Jakubowski-Skorokhod representation theorem (see also Brzezniak et. al. \cite{BrzezniakHausenblasRazafimandimby}) to extract a.s convergence on a new probability space. In what follows, passing to a weakly convergent subsequence $\mu^{h}$ (and denoting by $\mu$ the limit law) we infer the following result:

\begin{Proposition}\label{prop:skorokhod1}
There exists a subsequence $\mu^h$ (not relabelled), a probability space $(\widetilde\Omega,\widetilde\mf,\widetilde\prst)$ with $\mathcal{Y}$-valued Borel measurable random variables $(\widetilde\varrho_{h},\widetilde{\bf m}_h, \widetilde W_h, \widetilde N_{h}, {\mathcal{\widetilde V}}_{h})$, $h \in (0,1)$, and  $(\widetilde\varrho,\widetilde{\bf m}, \widetilde W, \widetilde N, {\mathcal{\widetilde V}})$ such that 
\begin{enumerate}
\item [(1)]the law of $(\widetilde\varrho_{h},\widetilde{\bf m}_h, \widetilde W_h, \widetilde N_{h}, {\mathcal{\widetilde V}}_{h})$ is given by $\mu^h$, $h\in(0,1)$,
\item [(2)]the law of $(\widetilde\varrho,\widetilde{\bf m}, \widetilde W,\widetilde N, {\mathcal{\widetilde V}})$, denoted by $\mu$, is a Radon measure,
\item [(3)]$(\widetilde\varrho_{h},\widetilde{\bf m}_h, \widetilde W_h, \widetilde N_{h}, {\mathcal{\widetilde V}}_{h})$ converges $\,\widetilde{\prst}$-almost surely to $(\widetilde\varrho,\widetilde{\bf m},\widetilde{\textbf{u}}, \widetilde W, \widetilde N, {\mathcal{\widetilde V}})$ in the topology of $\mathcal{Y}$, i.e.,
\begin{align*}
&\widetilde\varrho_h \rightarrow \widetilde\varrho \,\, \text{in}\, \,C_w([0,T]; L^{\gamma}(\T^3)), \quad
&\widetilde{\textbf{m}}_h &\rightarrow \widetilde{\textbf m} \,\, \text{in}\, \,C_w([0,T]; L^{\frac{2\gamma}{\gamma+1}}(\T^3)),\\
&\widetilde N_h \rightarrow \widetilde N \,\, \text{in}\, \, C([0,T]; \R), \quad
&\widetilde W_h &\rightarrow \widetilde W \,\, \text{in}\, \,C([0,T]; \mathfrak{W}_0)),\\
& {\mathcal{\widetilde V}}_h  \rightarrow {\mathcal{\widetilde V}}\,\, \text{weak-$*$ in}\, \, L^{\infty}((0,T)\times \T^3; \mathcal{P}(\R^4)).
\end{align*}
\item [(4)] For every $h$, we have $\widetilde W_h = \widetilde W$ $\p$-a.s.
\item [(5)] For Carath\'{e}odory functions $\underline{H}=\underline{H}(t,x,\varrho,\textbf m)$ and $\overline{H}=\overline{H}(t,x,\varrho,\textbf m)$, where $(t,x)\in (0,T)\times \T^3$ and $(\varrho,\textbf m) \in \R^4$, satisfying for some $p, q$ the growth condition
\begin{align*}
|\underline{H}(t,x,\varrho,\textbf m)| & \le 1 + |\varrho|^{p} + |\textbf m|^q,\\
|\overline{H}(t,x,\varrho,\textbf m)| & =\mathcal{O}\Big( 1 + |\varrho|^{\gamma} + |\textbf m|^{\frac{2 \gamma}{\gamma +1}}\Big),
\end{align*}
uniformly in $(t,x)$. Then we have $\widetilde\p$-a.s.
\begin{align*}
& \underline{H}(\widetilde\varrho_h, \widetilde{\textbf{m}}_h) \rightarrow \overline{\underline{H}(\widetilde\varrho, \widetilde{\textbf m})}\,\, \text{in}\,\, L^r((0,T)\times\T^3),\,\, \text{for all}\,\, 1<r\le\frac{\gamma}{p}\wedge \frac{2\gamma}{q(\gamma+1)}, \\
&\overline{H}(\widetilde\varrho_h, \widetilde{\textbf{m}}_h) \rightharpoonup \left\langle \mathcal{\widetilde V}_{(\cdot, \cdot)}; \overline{H}(\widetilde\varrho_h, \widetilde{\textbf{m}}_h) \right\rangle dxdt + \widetilde \mu_{\overline{H}}, \,\, \text{ weak-$*$ in}\, \, L^{\infty}(0,T; \mathcal{M}_b(\T^3)),
\end{align*}
where $\mu_{\overline{H}}$ is the concentration defect measure associated to the function $\overline{H}$. 
\end{enumerate}
\end{Proposition}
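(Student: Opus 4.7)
The plan is to apply the Jakubowski--Skorokhod representation theorem (Theorem~\ref{skorokhod}) to the tight sequence of joint laws $\{\mu^{h}\}$ on $\mathcal{Y}$. To do so I first split $\mathcal{Y}$ into the Polish factor $\mathcal{X}=C([0,T];\mathfrak{W}_0)$ (carrying the Wiener process) and the quasi-Polish remainder, and for each remaining factor I exhibit a countable family of continuous functions that separates points: for $C_w([0,T];L^\gamma(\T^3))$ and $C_w([0,T];L^{2\gamma/(\gamma+1)}(\T^3))$ the maps $u\mapsto \langle u(t_k),\varphi_j\rangle$ with countable dense families $\{t_k\}\subset[0,T]\cap\mathbb{Q}$ and $\{\varphi_j\}\subset C^\infty(\T^3)$; for the Young measure space $(L^{\infty}_{w^*}((0,T)\times\T^3;\mathcal{P}(\R^4)),w^*)$ the dualities against a countable dense family in $L^1((0,T)\times\T^3;C_0(\R^4))$; $C([0,T];\R)$ is Polish. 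With the hypotheses of Theorem~\ref{skorokhod} verified, the theorem produces the new probability space $(\widetilde\Omega,\widetilde{\mathbb F},\widetilde{\mathbb P})$, the subsequence, and the almost-sure convergent copies $(\widetilde\varrho_h,\widetilde{\bf m}_h,\widetilde W_h,\widetilde N_h,\mathcal{\widetilde V}_h)$ with matching laws, giving items (1)--(3) at once. Item (4) follows from conclusion (c) of Theorem~\ref{skorokhod} applied to the $\mathcal{X}$-factor, after observing that the original Wiener process $W_h\equiv W$ is independent of $h$, so the Polish projection satisfies the compatibility required by Jakubowski--Skorokhod.

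For the sub-critical half of (5), transferring the a priori bounds \eqref{aprho}--\eqref{estrhou2} via equality of laws gives
\[
\widetilde{\mathbb E}\Big[\|\,|\widetilde\varrho_h|^{p}+|\widetilde{\bf m}_h|^{q}\,\|_{L^{\gamma/p\wedge 2\gamma/(q(\gamma+1))}_{t,x}}^{s}\Big]\le C
\]
for every $s\ge 1$; hence for any $1<r\le (\gamma/p)\wedge(2\gamma/(q(\gamma+1)))$ the family $\underline{H}(\widetilde\varrho_h,\widetilde{\bf m}_h)$ is equi-integrable in $L^r((0,T)\times\T^3)$, $\widetilde{\mathbb P}$-a.s. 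Combining this equi-integrability with the $\widetilde{\mathbb P}$-a.s.\ weak-$*$ convergence $\mathcal{\widetilde V}_h\to\mathcal{\widetilde V}$ and the Carath\'eodory property of $\underline{H}$ yields, by a standard Young-measure argument (Lemma~\ref{lem001}) and Vitali's convergence theorem, the strong $L^r$ convergence of $\underline{H}(\widetilde\varrho_h,\widetilde{\bf m}_h)$ to $\langle\mathcal{\widetilde V}_{(\cdot,\cdot)};\underline{H}\rangle$.

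For the critical half of (5) I would argue by truncation. The uniform bound in $L^\infty(0,T;L^1(\T^3))$ for $\overline{H}(\widetilde\varrho_h,\widetilde{\bf m}_h)$ provides $\widetilde{\mathbb P}$-a.s.\ weak-$*$ precompactness in $L^\infty(0,T;\mathcal{M}_b(\T^3))$; denote a subsequential limit by $\widetilde H$. For each $R>0$ the truncation $T_R\overline{H}$ is bounded and falls under the sub-critical case already established, so $T_R\overline{H}(\widetilde\varrho_h,\widetilde{\bf m}_h)\rightharpoonup \langle\mathcal{\widetilde V}_{t,x};T_R\overline{H}\rangle\,dx\,dt$ strongly in $L^1_{t,x}$ and therefore weakly-$*$ in $\mathcal{M}_b$. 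Letting $R\to\infty$ and invoking Lemma~\ref{lemma001} (with $F=\overline{H}$ and $G=1+|\varrho|^\gamma+|{\bf m}|^{2\gamma/(\gamma+1)}$) one identifies the Lebesgue--Radon--Nikodym decomposition $\widetilde H=\langle\mathcal{\widetilde V}_{t,x};\overline{H}\rangle\,dx\,dt+\widetilde\mu_{\overline H}$, where the defect measure $\widetilde\mu_{\overline H}\in L^\infty_{w^*}(0,T;\mathcal{M}_b(\T^3))$ is dominated by the energy concentration defect and is therefore independent of the chosen subsequence; uniqueness of the decomposition upgrades the convergence to the whole (sub)sequence.

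The main obstacle is the last step: controlling the possible loss of mass in the critical weak-$*$ limit and pinning the concentration defect $\widetilde\mu_{\overline H}$ down precisely via Lemma~\ref{lemma001}, so that it is an $L^\infty(0,T;\mathcal{M}_b)$ object and not merely a space-time measure. The other delicate point is checking separability/quasi-Polishness of the Young-measure path space $\mathcal{Y}_{\mathcal{V}}$ carefully enough to fit Theorem~\ref{skorokhod}; everything else reduces to the standard tightness--representation--identification pipeline.
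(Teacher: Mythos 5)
Your overall pipeline is exactly the paper's: the published proof is essentially a three-line citation of the Jakubowski--Skorokhod theorem (Theorem~\ref{skorokhod}) for items (1)--(4) and of Lemma~\ref{lem001} and Lemma~\ref{lemma001} for item (5), and you are carrying out precisely that argument, with the quasi-Polish/point-separation verification and the truncation-plus-defect-measure identification made explicit. Those added details are correct and consistent with the tightness results of Propositions~\ref{prop:rhotight}--\ref{rhoutight13}.

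There is, however, one step in your treatment of the sub-critical half of (5) that fails as written. You claim that equi-integrability of $\underline{H}(\widetilde\varrho_h,\widetilde{\bf m}_h)$ in $L^r$ combined with the weak-$*$ convergence $\mathcal{\widetilde V}_h\to\mathcal{\widetilde V}$ yields, via Vitali's theorem, \emph{strong} $L^r$ convergence of $\underline{H}(\widetilde\varrho_h,\widetilde{\bf m}_h)$ to $\langle\mathcal{\widetilde V}_{(\cdot,\cdot)};\underline{H}\rangle$. Vitali's theorem requires convergence in measure in addition to uniform integrability, and convergence in measure of the compositions is not available here: the Young measure $\mathcal{\widetilde V}$ need not be a Dirac mass, and for an oscillating sequence (e.g.\ $u_n(x)=\sin(nx)$ with $\underline{H}(u)=u$) one has $\underline{H}(u_n)\rightharpoonup 0=\langle\mathcal{V};\underline{H}\rangle$ weakly but certainly not strongly, while all the hypotheses you invoke are satisfied. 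The conclusion of Lemma~\ref{lem001} --- and the assertion actually made in item (5), where $\overline{\underline{H}(\widetilde\varrho,\widetilde{\bf m})}$ denotes the Young-measure average --- is only \emph{weak} convergence in $L^r$, and that is all the subsequent limit passages in Section~\ref{proof1} use. So the stated proposition is still proved once you replace ``strong'' by ``weak'' and drop the appeal to Vitali; but as written that sentence asserts something false, and the same overstatement recurs in your truncation step for the critical half (where, again, weak-$*$ convergence of the truncated compositions is what you have and is all you need to identify the decomposition into $\langle\mathcal{\widetilde V}_{t,x};\overline{H}\rangle\,dx\,dt$ plus the defect $\widetilde\mu_{\overline{H}}$ via Lemma~\ref{lemma001}).
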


\begin{proof}
Proof of the items $(1)$, $(2)$, and $(3)$ directly follow from Jakubowski-Skorokhod representation theorem \cite{Jakubowski}, while item $(4)$ follows from \cite{Jakubowski}, and \cite{BrzezniakHausenblasRazafimandimby}. Finally, item $(5)$ follows from Lemma~\ref{lem001}, and Lemma~\ref{lemma001}.
\end{proof}


\subsubsection{Passage to the limit}
We shall now make use of the above convergences to pass to the limit in approximate equations \eqref{eq:energy}--\eqref{eq:energy1}, and the energy inequality \eqref{EI2''}. To that context, let us first show that the approximations $\widetilde \vr_\ep, \widetilde u_{h}$ solve equations \eqref{eq:energy}--\eqref{eq:energy1} on the new probability space $(\widetilde\Omega,\widetilde\mf,\widetilde\prst)$.
Note that, since $(\varrho_h,\vm_h,N_h)$ are random variables with values in $C([0,T];L^\gamma(\mathbb{T}^3))\times C([0,T];L^{\frac{2\gamma}{\gamma+1}}(\mathbb{T}^3))\times C([0,T];\R)$. By \cite[Lemma A.3]{GA} and \cite[Corollary A.2]{Ondre},  $(\widetilde\varrho_h,\widetilde\vm_h,  \widetilde{N}_h)$ are also random variables with values in $C([0,T],L^\gamma(\mathbb{T}^3))\times C([0,T],L^{\frac{2\gamma}{\gamma+1}}(\mathbb{T}^3))\times C([0,T];\R)$. Let $(\widetilde{\mf}_t^h)$ be the $\widetilde{\prst}$-augmented canonical filtration of the process $(\widetilde\varrho_h,\widetilde{\bf m}_h,\widetilde{W}, \widetilde{N}_h)$, that is 
\begin{equation*}
\begin{split}
\widetilde{\mf}_t^h&=\sigma\big(\sigma\big(\bfr_t\widetilde\varrho_h,\,\bfr_t\widetilde{\bf m}_h,\,\bfr_t \widetilde{W},  \bfr_t\widetilde{N}_h\big)\cup\big\{N\in\widetilde{\mf};\;\widetilde{\prst}(N)=0\big\}\big),\quad t\in[0,T],\\
\end{split}
\end{equation*}
where we denote by $\bfr_t$ the operator of restriction to the interval $[0,t]$ acting on various path spaces.
Let us remark that by assuming that the initial filtration $(\mathbb{F}_t)$ is the one generated by $W$, by \cite[Lemma A.6]{GA}, one can consider $(\widetilde{\mathbb{F}}_t^h)=(\widetilde{\mathbb{F}_t})$ is the filtration generated by $\widetilde{W}$.

\begin{Proposition}\label{prop:martsol}
For every $h\in(0,1)$, $\big((\widetilde{\Omega},\widetilde{\mf},(\widetilde{\mf}_{t}),\widetilde{\prst}),\widetilde\varrho_h,\widetilde{\bu}_h,\widetilde{W}\big)$ is a finite energy weak martingale solution to \eqref{eq:energy}--\eqref{eq:energy1} with the initial law $\Lambda_h$. 
\end{Proposition}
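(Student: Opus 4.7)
The plan is to exploit the equality of laws from Proposition~\ref{prop:skorokhod1} to transfer the discrete balance laws and the martingale structure from the original stochastic basis to the new one, following the now-standard strategy for identifying a martingale solution from a Skorokhod representation (see, e.g., the corresponding argument in the Breit–Feireisl–Hofmanov\'a monograph). The broad outline is: first establish the filtration and Wiener-process properties on the new basis, then pass the purely deterministic continuity equation through equality of laws, and finally identify the It\^o integral in the momentum equation by a martingale characterization.

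I would start by checking that $\widetilde W$ is an $(\widetilde{\mf}_t^h)$-cylindrical Wiener process. Since $W_h=W$ on the original basis is an $(\mf_t)$-cylindrical Wiener process, and the law of $(\widetilde\varrho_h,\widetilde{\bf m}_h,\widetilde W)$ equals the law of $(\varrho_h,{\bf m}_h,W)$ by Proposition~\ref{prop:skorokhod1}(1), the increments $\widetilde W(t)-\widetilde W(s)$ are independent of $\widetilde{\mf}_s^h$ and have the same Gaussian law as before; this is a standard measurability/independence argument based on the fact that the joint law on path space determines both the Gaussian structure of $\widetilde W$ and its adaptedness to the canonical filtration. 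Next, for the continuity equation \eqref{eq:energy}, I would introduce the functional
\[
\mathcal{C}_h(\varrho,{\bf m};t)
:=\langle\varrho(t),\varphi\rangle-\langle\varrho(0),\varphi\rangle+\int_0^t\langle{\bf m},\nabla\varphi\rangle\,\mathrm{d}s-\lambda h^3\int_0^t\langle\varrho,\Delta\varphi\rangle\,\mathrm{d}s-\mathcal R_1(h,\varphi)-\mathcal N_1(h,\varphi),
\]
and observe that on the original basis $\mathcal{C}_h(\varrho_h,{\bf m}_h;t)=0$ $\mathbb{P}$-a.s. for all rational $t\in[0,T]$. Because $\mathcal{C}_h$ is a Borel measurable functional of the trajectories and the law of $(\widetilde\varrho_h,\widetilde{\bf m}_h)$ agrees with that of $(\varrho_h,{\bf m}_h)$, the same identity holds on $\widetilde\Omega$ for all rational $t$, and the $\mathcal Y_\varrho$-continuity of the trajectories promotes it to all $t\in[0,T]$.

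The core step is identifying the stochastic integral in \eqref{eq:energy1}. I would introduce the momentum residual
\begin{align*}
M_h(t)&:=\langle\widetilde{\bf m}_h(t),\bm\varphi\rangle-\langle\widetilde{\bf m}_h(0),\bm\varphi\rangle+\int_0^t\Big\langle\Big(\tfrac{\widetilde{\bf m}_h\otimes\widetilde{\bf m}_h}{\widetilde\varrho_h}+p(\widetilde\varrho_h)\mathbb I\Big),\nabla\bm\varphi\Big\rangle\mathrm{d}s\\
&\quad-\lambda h^3\int_0^t\langle\widetilde{\bf m}_h,\Delta\bm\varphi\rangle\,\mathrm{d}s-\mathcal R_2(h,\bm\varphi)-\mathcal N_2(h,\bm\varphi),
\end{align*}
and show that $M_h$ is a continuous $(\widetilde{\mf}_t^h)$-square-integrable martingale whose quadratic variation equals
\[
\bigg\langle\!\!\bigg\langle M_h\bigg\rangle\!\!\bigg\rangle_t=\sum_{k\ge 1}\int_0^t\big|\langle\Psi_k(\widetilde\varrho_h,\widetilde{\bf m}_h),\bm\varphi\rangle\big|^2\,\mathrm{d}s,
\]
and whose cross variation with $\widetilde W$ acting on any $e_\ell$ is $\int_0^t\langle\Psi_\ell(\widetilde\varrho_h,\widetilde{\bf m}_h),\bm\varphi\rangle\,\mathrm{d}s$. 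Each of these identities is obtained by verifying the corresponding property on the original basis (where $M_h$ is the genuine It\^o integral), writing it as the vanishing of an expectation of a bounded continuous functional of the trajectories evaluated against increments $W(t)-W(s)$, and then transferring the identity to the tilde space using equality of laws for suitable truncations and a uniform-integrability argument provided by the estimates of Lemma~\ref{LemP1} together with \eqref{aprhov}--\eqref{estrhou2}. Once martingale property, quadratic variation, and cross variation with $\widetilde W$ are pinned down, a standard martingale representation argument (cf.\ the proof of Theorem 2.9.1 in the Breit–Feireisl–Hofmanov\'a book) yields
\[
M_h(t)=\int_0^t\langle\Psi(\widetilde\varrho_h,\widetilde{\bf m}_h),\bm\varphi\rangle\,\mathrm{d}\widetilde W,
\]
which is exactly \eqref{eq:energy1} on the new basis.

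The main obstacle is this last identification of the stochastic integral, because it is the only place where the original It\^o structure is genuinely at stake: equality in law does not directly transport stochastic integrals, only their laws. The technical care sits in choosing enough test functionals (typically smooth cylindrical functions of $(\widetilde\varrho_h,\widetilde{\bf m}_h,\widetilde W)$ restricted to times $s_1<\dots<s_n\le s<t$) to characterize the martingale and its brackets, and in justifying the passage from the pathwise integral identity on the original basis to the tilde basis despite the unboundedness of $\Psi/\varrho$; the hypotheses \eqref{FG1}--\eqref{FG2} together with the a priori bound $\varrho_h\ge\underline\varrho>0$ (Lemma~\ref{LemP}) make the coefficients globally Lipschitz on the effective range of the approximate solutions, which is what renders the uniform integrability step tractable. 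The energy inequality \eqref{EI2''} is transferred by an analogous but simpler argument, since its stochastic contribution is the process $\widetilde N_h$, already retained as a random variable of the Skorokhod representation and equal in law to $\int_0^\cdot\int_{\T^3}\vu_h\cdot\Psi(\varrho_h,{\bf m}_h)\,\mathrm{d}x\,\mathrm{d}W$.
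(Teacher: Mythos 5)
Your proposal is correct and follows essentially the same route as the paper: the paper's proof is a one-line citation to Theorem 2.9.1 of the Breit--Feireisl--Hofmanov\'a monograph \cite{BrFeHobook}, and what you write out --- transferring the deterministic balance laws by equality of laws and identifying the It\^o integral through the martingale property together with its quadratic variation and its cross variation with $\widetilde W$ --- is precisely the content and proof mechanism of that theorem. Your additional remarks on uniform integrability via Lemma~\ref{LemP} and the bounds \eqref{aprhov}--\eqref{estrhou2} are consistent with the standing hypotheses and fill in details the paper leaves to the reference.
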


\begin{proof}
Proof of the above proposition directly follows form the Theorem 2.9.1 of the monograph by Breit et. al. \cite{BrFeHobook}.
\end{proof}

\noindent We note that the above proposition implies that the new random variables satisfy the following equations and the energy inequality on the new probability space:
\begin{itemize}
\item for all $\varphi\in C^\infty(\mathbb{T}^3)$ and $\bm{\varphi}\in C^\infty(\mathbb{T}^3)$ we have $\mathbb{P}$-a.s. for all $t\in[0,T]$
\begin{align}
\label{eq:energyt}
\langle \widetilde\varrho_{h}(t), \varphi\rangle &= \langle\widetilde\varrho_{h}(0) , \varphi\rangle - \int_0^{t}\langle \widetilde {\bf m}_{h}, \nabla \varphi\rangle\mathrm{d}s + \lambda\,h^3 \int_0^t  \langle \widetilde \varrho_{h}(t), \Delta \varphi\rangle \,ds+ \mathcal{\widetilde R}_1(h,\varphi) + \mathcal{\widetilde N}_1(h,\varphi)
\end{align}
\begin{align}
\langle \widetilde {\bf m}_{h}(t), \bm\varphi\rangle &= \langle \widetilde {\bf m}_{h}(0), \bm{\varphi}\rangle - \int_0^{t} \bigg\langle \bigg(\frac{\widetilde {\bf m}_{h}\otimes\widetilde {\bf m}_{h}}{\widetilde \varrho_{h}} + \widetilde p_h \mathbb{I} \bigg), \nabla  \bm{\varphi} \bigg \rangle\,\mathrm{d}s
+ \lambda\,h^N \int_0^t  \langle \widetilde {\mathbf{m}}_{h}(t), \Delta  \bm{\varphi}\rangle \,ds \notag \\
&+\int_0^{t}\langle\Psi(\widetilde \varrho_{h},\widetilde {\bf m}_{h}), \bm{\varphi}\rangle\, \mathrm{d}\widetilde W_{h} + \mathcal{\widetilde R}_2(h,\bm{\varphi}) + \mathcal{\widetilde N}_2(h,\bm{\varphi}), \label{eq:energyt1}
\end{align}
where $\mathcal{\widetilde R}_1(h,\varphi), \mathcal{\widetilde R}_2(h,\bm{\varphi}), \mathcal{\widetilde N}_1(h,\varphi)$, and $\mathcal{\widetilde N}_2(h,\bm{\varphi})$ are defined similarly as in \eqref{error}, in the new probability space.
\item the energy inequality\index{energy inequality} holds 
\begin{align}\label{EI2t''}
\begin{aligned}
&-\int_0^T \partial_t \psi \int_{\T^3} \bigg[ \frac{1}{2} \frac{ | \widetilde {\bf m}_{h} |^2 }{\widetilde\varrho_{h}} + \frac{\widetilde \varrho_{h}^\gamma}{\gamma-1} \bigg] \,dx \,ds 
\leq \psi(0) \int_{\T^3} \bigg[ \frac{1}{2}\frac{|\widetilde {\bf m}_{h}(0)|^2}{\widetilde\varrho_{h}(0)}  + \frac{ \widetilde \varrho_{h}^\gamma(0)}{\gamma-1}  \bigg] \dx \\
& \qquad \qquad  +\sum_{k=1}^\infty\int_0^T \psi \bigg(\int_{\T^3}\Psi_k (\widetilde \varrho_{h}, \widetilde {\bf m}_{h})\cdot \widetilde{\bf u}_{h}\dx\bigg){\rm d} \widetilde W_{h,k}
+ \frac{1}{2}\sum_{k = 1}^{\infty}  \int_0^T \psi
\int_{\T^3} \widetilde \varrho_{h}^{-1}| \Psi_k (\widetilde \varrho_{h}, \widetilde {\bf m}_{h}) |^2 \, dx\,{\rm d}s.
\end{aligned}
\end{align}
holds $\mathbb P$-a.s., for all $\psi\in C_c^\infty([0,T)),\,\psi\,\ge\,0.$
\end{itemize}

\noindent Next we would like to pass to the limit in $h$ in \eqref{eq:energyt}, \eqref{eq:energyt1}, and \eqref{EI2t''}. To do this, we first recall that a-priori estimates \eqref{aprhov}--\eqref{estrhou2} continue to hold for the new random variables. Thus, making use of the item $(5)$ of Proposition~\ref{prop:skorokhod1}, we conclude that $\widetilde \p$-a.s., 
\begin{align}\label{weak limit}
&\widetilde \varrho_h \rightharpoonup \langle {\mathcal{\widetilde V}^{\omega}_{t,x}}; \widetilde \varrho \rangle, \,\,\text{weakly in}\,\, L^{\gamma}((0,T)\times\T^3),\\
&\widetilde {\textbf m}_h \rightharpoonup \langle {\mathcal{\widetilde V}^{\omega}_{t,x}}; \widetilde {\textbf m} \rangle, \,\,\text{weakly in}\,\, L^{\frac{2\gamma}{\gamma+1}}((0,T)\times\T^3)\label{weak limit 1}.
\end{align}
In order to pass to the limit in the nonlinear terms present in the equations, we first introduce the corresponding concentration defect measures 
\begin{align*}
\widetilde \mu_{C}&= \widetilde C -\left\langle \mathcal{\widetilde V}^{\omega}_{(\cdot, \cdot)}; \frac{\widetilde {\bf m}\otimes \widetilde {\bf m}}{\widetilde\varrho} \right\rangle dxdt, \,\,
&\widetilde \mu_{P}= \widetilde P -\langle \mathcal{\widetilde V}^{\omega}_{(\cdot, \cdot)};p(\widetilde\varrho)\rangle dxdt, \\
\widetilde \mu_{E} &= \widetilde E- \left\langle \mathcal{\widetilde V}^{\omega}_{(\cdot, \cdot)}; \frac{1}{2} \frac{|\widetilde {\bf m}|^2}{\widetilde\varrho} +P(\widetilde\varrho) \right \rangle dx, \,\,
&\widetilde \mu_{D}= \widetilde D -\left\langle \mathcal{\widetilde V}^{\omega}_{(\cdot, \cdot)}; \sum_{k \geq 1} \frac{ |\Psi_k (\widetilde \varrho, \widetilde {\bf m}) |^2 }{\widetilde \varrho}\right\rangle dxdt.
\end{align*}
With the help of these concentration defect measures, thanks to the discussion in Subsection~\ref{ym}, we can conclude that $ \mathbb{\widetilde P}$-a.s.
\begin{align*}
&\widetilde C_h \rightharpoonup \left\langle \mathcal{\widetilde V}^{\omega}_{(\cdot, \cdot)}; \frac{\widetilde {\bf m}\otimes \widetilde {\bf m}}{\widetilde\varrho} \right\rangle dxdt + \widetilde \mu_{C}, \,\, \text{ weak-$*$ in}\, \, L^{\infty}(0,T; \mathcal{M}_b(\T^3)), \\
&\widetilde D_h \rightharpoonup \left\langle \mathcal{\widetilde V}^{\omega}_{(\cdot, \cdot)}; \sum_{k \geq 1} \frac{ |\Psi_k (\widetilde \varrho, \widetilde {\bf m}) |^2 }{\widetilde \varrho}\right\rangle dxdt + \widetilde \mu_{D}, \,\, \text{weak-$*$ in}\, \, L^{\infty}(0,T; \mathcal{M}_b(\T^3)), \\
&\widetilde E_h \rightharpoonup \left\langle \mathcal{\widetilde V}^{\omega}_{(\cdot, \cdot)}; \frac{1}{2} \frac{|\widetilde {\bf m}|^2}{\widetilde\varrho} +P(\widetilde\varrho) \right \rangle dx + \widetilde \mu_{E}, \,\, \text{weak-$*$ in}\, \, L^{\infty}(0,T; \mathcal{M}^+_b(\T^3)), \\
&\widetilde P_h \rightharpoonup \langle \mathcal{\widetilde V}^{\omega}_{(\cdot, \cdot)};P(\widetilde\varrho)\rangle dxdt + \widetilde \mu_{P}, \,\, \text{weak-$*$ in}\, \, L^{\infty}(0,T; \mathcal{M}_b(\T^3)). 
\end{align*}
Note that, collacting all the previous informations, we can pass to the limit in equation \eqref{eq:energyt} to get $\p$-a.s.
\begin{align*}
\int_{\T^3} \langle \widetilde{\mathcal{V}}^{\omega}_{\tau,x}; \widetilde{\varrho} \rangle \, \varphi\,\dx -\int_{\T^3} \langle \widetilde{\mathcal{V}}^{\omega}_{0,x}; \widetilde{\varrho} \rangle\, \varphi \,\dx 
= \int_{0}^{\tau} \int_{\T^3}  \langle \widetilde{\mathcal{V}}^{\omega}_{s,x}; \widetilde{\textbf{m}}\rangle \cdot \nabla_x \varphi \,dx \,\D s 
\end{align*}
holds for all $\tau\in [0,T)$, and for all $\varphi\in C^{\infty}(\T^3)$.

Next, we move onto the martingale term $\widetilde M_{h}:= \int_0^{t}\langle\Psi(\widetilde \varrho_{h},\widetilde {\bf m}_{h}), \bm{\varphi}\rangle\, \mathrm{d}\widetilde W_{h}$ coming from the momentum equation. Note that thanks to compact embedding given in Lemma~\ref{comp}, we conclude that for each $t$, $ \tilde M_{h}(t) \rightarrow \tilde M(t)$, $\p$-a.s. in the topology of $W^{-m,2}(\T^N)$. However, we are interested in identifying $\tilde M(t)$. Indeed, we may apply item $(5)$ of Proposition~\ref{prop:skorokhod1}, to the composition $\Psi_k (\widetilde \rho_{h},\widetilde {\bf m}_{h})$, $k \in \N$. This gives
$$
\Psi_k (\widetilde \rho_{h},\widetilde {\bf m}_{h}) \rightharpoonup
\Big \langle {\mathcal{\widetilde V}}^{\omega}_{t,x} ; \Psi_k (\widetilde \rho,\widetilde {\bf m}) \Big \rangle \,\, \mbox{weakly in} \,\,L^q((0,T)\times \T^3),
$$
$\p$-a.s., for some $q>1$. Moreover, for $m>3/2$, we have by Sobolev embedding 
\begin{align*}
\widetilde \E \Big[\int_0^T \| \Psi(\widetilde \rho_{h}, \widetilde {\bf m}_{h}) \|^2_{L_2(\mathcal{U}; W^{-m,2})}\,dt \Big]\le \widetilde \E \Big[\int_0^T (\widetilde \rho_{h})_{\T^3} \int_{\T^3} (\widetilde \rho_{h} + \widetilde \rho_{h} |\widetilde {\bf u}_{h}|^2)\,dx\,dt \Big]\le c(r).
\end{align*}
This implies that 
$$
\Psi_k (\widetilde \rho,\widetilde {\bf m}) \rightharpoonup  \Big \langle \mathcal{\widetilde V}^{\omega}_{t,x} ; \Psi_k (\widetilde \rho,\widetilde {\bf m}) \Big \rangle \,\, \mbox{weakly in} \,\,L^2(\Omega \times [0,T]; W^{-m,2}(\T^3)).
$$ 
Note that the It\^o integral
$$
I_t:\varphi \rightarrow \int_0^t \varphi(s) \D \widetilde {W}(s)
$$
is a linear and continuous (hence weakly continuous) map from $L^2(\Omega \times [0,T]; W^{-m,2}(\T^3))$ to $L^2(\Omega; W^{-m,2}(\T^3))$.
Therefore, we can make use of weak continuity of It\^{o} integral, and item $(4)$ of Proposition~\ref{prop:skorokhod1}, to conclude $I_t(\Psi_k (\widetilde \rho_h,\widetilde {\bf m}_h) )$ converges weakly to $I_t\big(\langle {\mathcal{\widetilde V}}^{\omega}_{t,x} ; \Psi_k (\widetilde \rho,\widetilde {\bf m}) \rangle\big)$ in $L^2(\Omega;W^{-m,2}(\mathbb{T}^3))$. Collectting all above informations, we can conclude that 
\begin{equation} \label{second condition measure-valued solution 23}
\begin{aligned}
&\int_{\Omega}\Big(\int_{\T^3} \langle \widetilde{\mathcal{V}}^{\omega}_{\tau,x};\widetilde{\textbf{m}}\rangle \cdot \bm{\varphi}\, dx - \int_{\T^3} \langle \widetilde{\mathcal{V}}^{\omega}_{0,x};\widetilde{\textbf{m}}\rangle \cdot \bm{\varphi}\, \D x\Big)\alpha(\omega)\, \D \tilde{\p}(\omega) \\
&\qquad = \int_{\Omega}\Big(\int_{0}^{\tau} \int_{\T^3} \left[\left\langle \widetilde{\mathcal{V}}^{\omega}_{t,x}; \frac{\widetilde{\textbf{m}}\otimes \widetilde{\textbf{m}}}{\widetilde\varrho} \right\rangle: \nabla{\bm{\varphi}} + \langle \widetilde{\mathcal{V}}^{\omega}_{t,x};p(\widetilde{\varrho})\rangle \divv \bm{\varphi} \right] dx dt \\
&\qquad \qquad+ \int_{\T^3} \bm{\varphi}\,\int_0^{\tau} \left\langle \widetilde{\mathcal{V}}^{\omega}_{t,x}; \Psi(\widetilde{\varrho},\widetilde{\textbf{m}} )\right\rangle \, \D \widetilde{W} \,dx+ \int_{0}^{\tau} \int_{\T^3}  \nabla \bm{\varphi}: d(\widetilde{\mu}_C+\widetilde{\mu}_{P}\mathbb{I})\Big)\alpha(\omega)\,\D\widetilde{\p}(\omega),
\end{aligned}
\end{equation}
holds for all $\tau\in [0,T)$, for all $\alpha\in L^2(\widetilde{\Omega})$ and for all $\bm{\varphi} \in C^{\infty}(\T^3;\mathbb{R}^3)$. Since $C^\infty(\mathbb{T}^3)$ is separable space with sup norm, above equality \eqref{second condition measure-valued solution 23} implies that $\p$-a.s.
\begin{equation*} 
\begin{aligned}
\int_{\T^3} \langle \widetilde{\mathcal{V}}^{\omega}_{\tau,x};\widetilde{\textbf{m}}\rangle \cdot \bm{\varphi}\, dx - \int_{\T^3} \langle \widetilde{\mathcal{V}}^{\omega}_{0,x};\widetilde{\textbf{m}}\rangle \cdot \bm{\varphi}\, \D x 
&=\int_{0}^{\tau} \int_{\T^3} \left[\left\langle \widetilde{\mathcal{V}}^{\omega}_{t,x}; \frac{\widetilde{\textbf{m}}\otimes \widetilde{\textbf{m}}}{\widetilde{\varrho}} \right\rangle: \nabla{\bm{\varphi}} + \langle \widetilde{\mathcal{V}}^{\omega}_{t,x};p(\widetilde{\varrho})\rangle \divv \bm{\varphi} \right] dx dt \\
&\qquad \qquad+ \int_{\T^3} \bm{\varphi}\,\int_0^{\tau} \left\langle \widetilde{\mathcal{V}}^{\omega}_{t,x}; \Psi(\widetilde{\varrho},\widetilde{\textbf{m}} )\right\rangle \, \D \widetilde{W} \,dx+ \int_{0}^{\tau} \int_{\T^3}  \nabla \bm{\varphi}: d(\widetilde{\mu}_C+\widetilde{\mu}_{P}\mathbb{I})
\end{aligned}
\end{equation*}
holds for all $\tau\in [0,T)$, and for all $\bm{\varphi} \in C^{\infty}(\T^3;\mathbb{R}^3)$. where $(\widetilde{\mu}_C+\widetilde{\mu}_{P}\mathbb{I})\in L^{\infty}_{w^*}\big([0,T]; \mathcal{M}_b({\T^3} ), \tilde{\p}$-a.s., is tensor-valued measure. Therefore we conclude that \eqref{first condition measure-valued solution}-\eqref{second condition measure-valued solution} holds.

\noindent  Regarding the convergence of martingale term $\widetilde N_{h}$, appearing in the energy inequality, we have following proposition.
\begin{Proposition}
	For each $t$, $\widetilde N_{h}(t) \rightarrow \widetilde N(t)$ in $\R$, $\p$-a.s., and $\widetilde N(t)$ is a real valued square-integrable martingale.
\end{Proposition}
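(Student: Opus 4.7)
The plan is to combine the uniform moment bounds already established in Proposition~\ref{rhoutight13} with the almost sure convergence $\widetilde N_h \to \widetilde N$ in $C([0,T];\R)$ delivered by Proposition~\ref{prop:skorokhod1}, and then upgrade to the martingale property through the classical test-functional argument. First I would note that the pointwise almost sure convergence $\widetilde N_h(t) \to \widetilde N(t)$ in $\R$ is an immediate consequence of the uniform (in $t$) convergence in $C([0,T];\R)$ given by item~(3) of Proposition~\ref{prop:skorokhod1}. So the real content of the statement is the square-integrability and the martingale property.

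For square-integrability, since each $N_h$ is an It\^o integral with integrand controlled by $\sqrt{\varrho_h}\,\vu_h \cdot \Psi_k/\sqrt{\varrho_h}$, the It\^o isometry together with \eqref{FG2} and the energy bound \eqref{aprhov} gives
\begin{equation*}
\E\bigl[|N_h(t)|^2\bigr] \le C\,\E\!\int_0^t \sum_{k\ge 1}\beta_k^2 \int_{\T^3} \varrho_h |\vu_h|^2 \,dx\,ds \le C,
\end{equation*}
uniformly in $h$. By equality of laws, the same bound holds for $\widetilde N_h$ on the new probability space, and Fatou together with the a.s.\ convergence yields $\widetilde \E[|\widetilde N(t)|^2] \le \liminf_h \widetilde \E[|\widetilde N_h(t)|^2] < \infty$. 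Combined with the uniform $L^r$ bound for $r>2$ obtained in the proof of Proposition~\ref{rhoutight13}, this ensures $\{\widetilde N_h(t)\}_h$ is uniformly integrable for each $t$, so in fact $\widetilde N_h(t) \to \widetilde N(t)$ in $L^2(\widetilde\Omega)$ as well.

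For the martingale property I would use the standard characterization. Fix $0\le s \le t \le T$ and let $\Phi$ be any bounded continuous functional on the restricted path space $C([0,s];\mathcal{Y})$ (or a product thereof involving $\widetilde\varrho_h$, $\widetilde{\bf m}_h$, $\widetilde W_h$, $\widetilde N_h$ restricted to $[0,s]$). Since $N_h$ is a genuine It\^o integral with respect to $W$ and hence a square-integrable $(\mf_t)$-martingale, and since the joint law of the original tuple coincides with that of its tilde counterpart by item~(1) of Proposition~\ref{prop:skorokhod1}, one has
\begin{equation*}
\widetilde\E\Bigl[\Phi\bigl(\bfr_s(\widetilde\varrho_h,\widetilde{\bf m}_h,\widetilde W_h,\widetilde N_h)\bigr)\cdot \bigl(\widetilde N_h(t)-\widetilde N_h(s)\bigr)\Bigr] = 0.
\end{equation*}
Passing to the limit $h\to 0$, a.s.\ convergence of all the involved random variables together with the uniform $L^r$ bound for some $r>2$ (which secures uniform integrability of the product) gives
\begin{equation*}
\widetilde\E\Bigl[\Phi\bigl(\bfr_s(\widetilde\varrho,\widetilde{\bf m},\widetilde W,\widetilde N)\bigr)\cdot \bigl(\widetilde N(t)-\widetilde N(s)\bigr)\Bigr] = 0,
\end{equation*}
which identifies $\widetilde N$ as an $(\widetilde{\mf}_t)$-martingale, where $(\widetilde{\mf}_t)$ is the augmented filtration generated by $(\widetilde\varrho,\widetilde{\bf m},\widetilde W,\widetilde N)$.

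The main obstacle is the final passage to the limit: one must justify that the product $\Phi(\cdot)\cdot(\widetilde N_h(t)-\widetilde N_h(s))$ converges in $L^1(\widetilde\Omega)$, not merely a.s. This is handled by exploiting the uniform $L^r$ bound on $\widetilde N_h$ for $r>2$ obtained in the proof of Proposition~\ref{rhoutight13} together with boundedness of $\Phi$, which yields uniform integrability and thus permits the expectation to pass to the limit. Once this is done, the identity above and the $(\widetilde{\mf}_s)$-measurability of $\Phi(\bfr_s(\cdot))$ jointly characterize $\widetilde N$ as an $(\widetilde{\mf}_t)$-martingale, completing the proof.
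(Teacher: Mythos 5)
Your proposal is correct and follows essentially the same route as the paper: almost sure convergence is read off from Proposition~\ref{prop:skorokhod1}, the martingale identity is transferred from the approximations via equality of laws, and the passage to the limit in the conditional-expectation characterization is justified by uniform square-integrability (Vitali/uniform integrability). Your version is in fact slightly more careful than the paper's, since you test against bounded continuous functionals of the restricted paths (consistent with how $\widetilde{\mf}_s$ is generated) rather than against indicator functions directly, and you spell out the uniform $L^2$ bound via the It\^o isometry, but the underlying argument is the same.
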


\begin{proof}
	Note that, thanks to Proposition~\ref{prop:skorokhod1}, we have the information $\widetilde N_{h} \rightarrow  \widetilde N $ $\p$-a.s. in $C([0,T]; \R)$. To conclude that $\widetilde N(t)$ is a martingale, We have to show that, $\p$-a.s.
	$$
	\widetilde \E[\widetilde N(t)| \mathcal{\widetilde F}_s] = \widetilde N(s),
	$$
	for all $t,s \in [0,T]$ with $s \le t$. To prove this, it is sufficient to show that, for all $A\in\mathcal{F}_s$
	$$
	\widetilde \E \Big[ \mathcal{I}_{A} \big(\widetilde N(t)-\widetilde N(s)\big) \Big]=0,
	$$
 Now using the fact that $\widetilde N_{h}(t)$ is a martingale, we know that
	$$
	\widetilde \E \Big[ \mathcal{I}_A \big(\widetilde N_{h}(t)-\widetilde N_{h}(s)\big) \Big]=0,
	$$
	 for all $A\in\mathcal{F}_s$. Note that for all $t\in[0,T]$ $\widetilde N_{h}(t)$ is uniform bounded in $ L^2(\widetilde \Omega)$, with the help of Vitali's convergence theorem, we can pass to the limit in $h$ to conclude that $\widetilde N(t)$ is a martingale. 
\end{proof}

\begin{Lemma}\label{rhoutight131}
	The concentration defect $0\le \mathcal{\widetilde D}(\tau):= \widetilde \mu_{E}(\tau)(\T^3)$ dominates defect measures $\widetilde \mu_{D}$ in the sense of Lemma~\ref{lemma001}. More precisely, there exists a constant $C>0$ such that
	\begin{equation*} 
	\int_{0}^{\tau} \int_{\T^3} d|\widetilde \mu_C| + \int_{0}^{\tau} \int_{\T^3} d|\widetilde \mu_D| + \int_{0}^{\tau} \int_{\T^3} d|\widetilde \mu_P| \leq C \int_{0}^{\tau} \mathcal{\widetilde D}(\tau)\,dt,
	\end{equation*}	
	for a.e. $\tau \in (0,T)$, $\p$-a.s.
\end{Lemma}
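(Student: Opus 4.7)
The proof hinges on a pointwise comparison between the integrands defining each of the three concentration defects and the energy density $E(\vr,\vm) \defeq \tfrac{1}{2}|\vm|^2/\vr + P(\vr)$, followed by an application of Lemma~\ref{lemma001}. The plan is to establish, for each of the functions $F \in \big\{\vm \otimes \vm/\vr,\ p(\vr),\ \sum_{k\geq 1}\vr^{-1}|\Psi_k(\vr,\vm)|^2\big\}$, a pointwise upper bound of the form $|F(\vr,\vm)| \leq C\big(1 + E(\vr,\vm)\big)$ with $C$ independent of $(\vr,\vm)$. Since the constant $1$ is continuous and uniformly bounded, it carries no concentration defect; therefore Lemma~\ref{lemma001}, applied to $F$ and $G(\vr,\vm) \defeq C(1+E(\vr,\vm))$, yields $|\widetilde\mu_F| \leq C\,\widetilde\mu_E$ as measures on $[0,\tau]\times\T^3$, $\widetilde\p$-a.s., which is the key pointwise-in-$t$ domination.

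The individual bounds are elementary. For the pressure, $p(\vr) = (\gamma-1)P(\vr) \leq (\gamma-1)E(\vr,\vm)$, giving $|\widetilde\mu_P| \leq (\gamma-1)\widetilde\mu_E$. For the convective tensor, each entry satisfies $|\vm_i \vm_j|/\vr \leq |\vm|^2/\vr \leq 2E(\vr,\vm)$, and summing componentwise over $i,j\in\{1,2,3\}$ gives $|\widetilde\mu_C| \leq 18\,\widetilde\mu_E$ in the total variation sense. For the diffusion term, the growth hypotheses \eqref{FG1}--\eqref{FG2} and the mean value theorem yield $|\Psi_k(\vr,\vm)| \leq \beta_k(\vr + |\vm|)$, so $|\Psi_k|^2 \leq 2\beta_k^2(\vr^2 + |\vm|^2)$. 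Combining this with the elementary inequality $\vr \leq 1 + (\gamma-1)P(\vr)$ (trivial on $\{\vr\leq 1\}$ and from $\vr \leq \vr^\gamma$ on $\{\vr\geq 1\}$), together with $\sum_k \beta_k^2 \leq \big(\sum_k\beta_k\big)^2 < \infty$, we obtain
\[
\sum_{k\geq 1}\frac{|\Psi_k(\vr,\vm)|^2}{\vr} \;\leq\; 2\Big(\sum_{k\geq 1}\beta_k^2\Big)\Big(\vr + \frac{|\vm|^2}{\vr}\Big) \;\leq\; C\big(1 + E(\vr,\vm)\big),
\]
whence $|\widetilde\mu_D| \leq C\,\widetilde\mu_E$ with $C = C\big(\gamma, \sum_k\beta_k\big)$.

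To conclude, since $\mathcal{\widetilde D}(t) = \widetilde\mu_E(t)(\T^3)$ by definition, integrating each of the three defect dominations over $\T^3$ and then over $[0,\tau]$ produces the claimed combined bound with a single constant $C$. The only mildly delicate technical point is the presence of the countable sum in the definition of $\widetilde\mu_D$: one first applies Lemma~\ref{lemma001} to the truncated partial sums $\sum_{k=1}^N \vr^{-1}|\Psi_k(\vr,\vm)|^2$ dominated by $2\big(\sum_{k\leq N}\beta_k^2\big)(1+E)$, obtaining defects controlled by $C_N\,\widetilde\mu_E$ with $C_N$ bounded uniformly in $N$, and then passes to the limit $N\to\infty$ by monotone convergence on nonnegative measures. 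This is a routine step and does not constitute a genuine obstacle; the only substantive input is the simple pointwise chain of inequalities between $F$, $G$, and $E$ discussed above.
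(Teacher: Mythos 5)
Your proposal is correct and follows essentially the same route as the paper: pointwise domination of each of $\vm\otimes\vm/\vr$, $p(\vr)$, and $\sum_k\vr^{-1}|\Psi_k(\vr,\vm)|^2$ by $C(1+E(\vr,\vm))$ (the last via \eqref{FG1}--\eqref{FG2}), followed by Lemma~\ref{lemma001}, with the paper simply citing the ``deterministic argument'' for the $\widetilde\mu_C$ and $\widetilde\mu_P$ bounds that you spell out explicitly. Your additional truncation remark for the countable sum is a harmless refinement of the same argument.
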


\begin{proof}
	Following deterministic argument we can conclude that $\widetilde \mu_{E}$ dominates defect measures $\widetilde \mu_{C}, \widetilde \mu_P$. To show the dominance of $\widetilde \mu_{E}$ over $\widetilde \mu_{D}$, observe that by virtue of hypotheses (\ref{FG1}), (\ref{FG2}), the function
	\[
	[\vr, \vc{m}] \mapsto \sum_{k \geq 1} \frac{ | \Psi_k (\varrho, {\bf m}) |^2 }{\varrho} \ \mbox{is continuous},
	\]
	and as such dominated by the total energy
	\[
	\sum_{k \geq 1} \frac{ | \Psi_k (\varrho, {\bf m}) |^2 }{\varrho} \leq c \left(  \varrho  + \frac{| {\bf m} |^2}{\vr}  \right) \leq c \left( \frac{1}{2} \frac{|{\bf m}|^2}{\vr}  + P(\vr) \right) + 1.
	\]
	Hence, a simple application of the Lemma~\ref{lemma001} finishes the proof of the lemma. 
\end{proof}

To conclude  \eqref{third condition measure-valued solution}, we proceed as follows. First note that we can pass to limit in $h\to 0$ in \eqref{EI2t''} to obtain the following energy inequality in the new probablity space.

\begin{align}\label{tr}
	\begin{aligned}
		-\int_0^T \partial_t \psi \bigg[\int_{\mathbb T^3} & \bigg\langle\widetilde{\mathcal{V}}_{t,x}^\omega;\frac{|\widetilde{\bf m}|^2}{\widetilde\varrho} + P(\widetilde\varrho)\bigg\rangle dx+ \widetilde{\mathcal{D}}(s)\bigg]\,ds
		\leq \psi(0) \int_{\mathbb T^3} \bigg[\bigg\langle\widetilde{\mathcal{V}}_{t,x}^\omega; \frac{1}{2}\frac{|\widetilde{\bf m}|^2}{\widetilde\varrho}  + P(\widetilde\varrho(0))\bigg\rangle\bigg] \dx \\
		&\qquad +\frac{1}{2}\int_0^T\psi\int_{\mathbb{T}^3}\D \tilde{\mu}_{{D}}+\int_0^T\psi\, \D\widetilde{N} + \frac{1}{2}\sum_{k = 1}^{\infty}  \int_0^T \psi
		\int_{\mathbb T^3} \bigg\langle\widetilde{\mathcal{V}}_{t,x};{\widetilde\varrho}^{-1}|  \Psi_k (\widetilde\varrho, \widetilde{\bf m}) |^2 \bigg\rangle\,dx\, {\rm d}s.
	\end{aligned}
\end{align}
holds $\mathbb P$-a.s., for all $\psi\in C_c^\infty([0,T)),\,\psi\,\ge\,0.$
Fix any $s$ and $t$ such that
$0\,\textless\,s\,\textless\,t\,\textless\,T$. For any $r\,\textgreater\,0$ with $0\,\textless\,s-r\textless\,t+r\,\textless\,T$, let $\psi_r$ be a Lipschitz fucntion that is linear on $[s-r,s]$ or $[t, t+r]$ and satiesfies
$$\psi_r(\tau)=\begin{cases}
	0,&\text{if}\,\, \tau\in[0,s-r]\,\,\text{or}\,\,\tau\in[t+r,T]\\
	1,&\text{if}\,\,\tau\in[s,t].
\end{cases}$$
Then, $\psi_r$ is an admissible test fuction in \eqref{tr}, via a standard regularization argument. From \eqref{tr} with $\psi_r$ as test fuction, we have $\p$-a.s, for all $t\in[0,T]$
\begin{align}\label{pr}
	&\frac{1}{r}\int_{t}^{t+r}\bigg(\int_{\mathbb{T}^3}\left\langle \mathcal{\tilde{V}}^{\omega}_{\tau,x};\frac{1}{2}\frac{|{\widetilde{\bf m}}|^2}{{\widetilde\varrho}}+P(\widetilde\varrho) \right\rangle \dx +\mathcal{\widetilde{D}}(s)\bigg){\rm d}\tau\,\notag\\&\qquad\le\,\frac{1}{r}\int_{t-r}^t\bigg(\int_{\mathbb{T}^3}\left\langle \mathcal{\widetilde{V}}^{\omega}_{\tau,x};\frac{1}{2}\frac{|\widetilde{{\bf m}}|^2}{{\widetilde{\varrho}}}+P(\widetilde{\varrho}) \right\rangle \dx+\mathcal{\widetilde{D}}(s)\bigg){\rm d}\tau+\frac{1}{2} \int_{s-r}^{t+r}\psi_r(\tau) \int_{\T^3} \left\langle \widetilde{\mathcal{V}}^{\omega}_{\tau,x};{\widetilde{\varrho}^{-1}|{\Psi_k(\widetilde{\varrho},\widetilde{\mathbf{m}})}|^2} \right\rangle \dx\notag\\&\qquad\qquad +\frac{1}{2}\int_{s-r}^{t+r}\int_{\mathbb{T}^3}\psi_r(\tau){\rm d}\widetilde\mu_D(x,\tau) +\int_{s-r}^{t+r}\psi_r(\tau){\rm d}\widetilde N
\end{align}
Now letting limit as $r\to 0^+$ in \eqref{pr}, then we have  $\p$-a.s,for all $t\in[0,T]$
\begin{align*}
	&\lim_{r\to 0^+}\frac{1}{r}\int_{t}^{t+r}\bigg(\int_{\mathbb{T}^3}\left\langle \mathcal{\widetilde{V}}^{\omega}_{\tau,x};\frac{1}{2}\frac{|\widetilde{{\bf m}}|^2}{\varrho}+P(\widetilde{\varrho}) \right\rangle \dx +\mathcal{\widetilde{D}}(s)\bigg){\rm d}\tau\,\notag\\&\qquad\le\,\lim_{r\to0 ^+}\frac{1}{r}\int_{t-r}^t\bigg(\int_{\mathbb{T}^3}\left\langle \mathcal{\widetilde{V}}^{\omega}_{\tau,x};\frac{1}{2}\frac{|{{\widetilde{\bf m}}|^2}}{\widetilde{\varrho}}+P(\widetilde{\varrho}) \right\rangle \dx+\mathcal{\widetilde{D}}(s)\bigg){\rm d}\tau+\frac{1}{2} \int_{s}^{t}\int_{\T^3} \left\langle \mathcal{V}^{\omega}_{\tau,x};{\widetilde{\varrho}^{-1}|{\Psi_k(\widetilde{\varrho},\widetilde{\mathbf{m}})}|^2} \right\rangle \dx\,{\rm d}\tau\notag\\&\qquad\qquad +\frac{1}{2}\int_{s}^{t}\int_{\mathbb{T}^3}{\rm d}\widetilde\mu_D(x,\tau) +\int_{s}^{t}{\rm d}\widetilde N
\end{align*}
Thus we conclude that \eqref{third condition measure-valued solution} holds. If $s=0$, then we need different test function to conclude result. In this case we take 
$$\psi_r(\tau)=\begin{cases}
	1,&\text{if}\,\,\tau\in[0,t]\\
	\text{linear},&\text{if}\,\,\tau\in[t,t+r]\\
	0,&\text{Otherwise}.
\end{cases}$$
and apply the same argument as before.


\section{Weak-Strong Uniqueness Principle}
\label{w-s}
In this section, we establish pathwise weak (measure-valued)--strong uniqueness principle for dissipative measure-valued martingale solutions. In what follows, we first introduce the relative energy functional which plays a pivotal role in the proof of weak (measure-valued)--strong uniqueness principle. In the context of compressible Euler equations, relative energy functional reads
\begin{align}\label{relative energy 1}
\mathfrak{E}'_{\text{mv}}\big(\varrho,\mathbf{m}\,\big|\,s,\mathbf{Q}\big)(t):&=\int_{\mathbb{T}^3}\bigg\langle {\mathcal{V}^{\omega}_{t,x}}; \frac{1}{2} { \frac{|\textbf{m}|^2}{\varrho}} + P(\varrho)\bigg\rangle \D x -\int_{\mathbb{T}^3}\big\langle {\mathcal{V}^{\omega}_{t,x}}; {\textbf{m}}\big\rangle\cdot\mathbf{Q} \D x+\frac{1}{2}\int_{\mathbb{T}^3}\big\langle {\mathcal{V}^{\omega}_{t,x}};\varrho \big\rangle|\mathbf{Q}|^2\D x\notag\\&\qquad -\int_{\mathbb{T}^3}\big\langle {\mathcal{V}^{\omega}_{t,x}};\varrho \big\rangle P'(s)\,\D x-\int_{\mathbb{T}^3}[P'(s)s-P(s)]\D x + \mathcal{D}(t).
\end{align}
In view of the energy inequality \eqref{energy_001}, it is clear that the above energy functional \eqref{relative energy 1} is defined for all $t\in[0,T]\setminus\mathcal{A}$, where the set $\mathcal{A}$, may depends on $\omega$, has Lebesgue measure zero. We also define relative energy function for all $t\in\mathcal{A}$ as follows
\begin{align}\label{relative energy 2}
\mathfrak{E}''_{\text{mv}}\big(\varrho,\mathbf{m}\,\big|\,s,\mathbf{Q}\big)(t):&=\lim_{r\to 0}\int_{t}^{t+r}\bigg[\int_{\mathbb{T}^3}\bigg\langle {\mathcal{V}^{\omega}_{s,x}}; \frac{1}{2} { \frac{|\textbf{m}|^2}{\varrho}} + P(\varrho)\bigg\rangle \D x +\mathcal{D}(s)\bigg]\D s -\int_{\mathbb{T}^3}\big\langle {\mathcal{V}^{\omega}_{t,x}}; {\textbf{m}}\big\rangle\cdot\mathbf{Q} \D x\notag\\&\qquad +\frac{1}{2}\int_{\mathbb{T}^3}\big\langle {\mathcal{V}^{\omega}_{t,x}};\varrho \big\rangle|\mathbf{Q}|^2\D x-\int_{\mathbb{T}^3}\big\langle {\mathcal{V}^{\omega}_{t,x}};\varrho \big\rangle P'(s)\,\D x-\int_{\mathbb{T}^3}[P'(s)s-P(s)]\D x + \mathcal{D}(t).
\end{align}
Using relative energy functionals \eqref{relative energy 1}-\eqref{relative energy 2}, we define relative energy functional for all time $t\in[0,T]$ as follows
\begin{align}
\mathfrak{E}_{\mathrm{mv}}\big(\varrho,\mathbf{m}\,\big|\,s,\mathbf{Q}\big)(t):=\begin{cases}
\mathfrak{E}'_{\mathrm{mv}}\big(\varrho,\mathbf{m}\,\big|\,s,\mathbf{Q}\big)(t),&\text{if}\,\,t\in[0,T]\setminus\mathcal{A}\\
\mathfrak{E}''_{\mathrm{mv}}\big(\varrho,\mathbf{m}\,\big|\,s,\mathbf{Q}\big)(t),&\text{if}\,\,t\in\mathcal{A}
\end{cases}
\end{align}
With the help of the above definition of relative energy functional, we are now in a position to derive the following relative energy inequality.
\begin{Proposition}[Relative Energy Inequality] \label{prop:01}
	Let $\big[ \big(\Omega,\mf, (\mf_{t})_{t\geq0},\mathbb{P} \big); \mathcal{V}^{\omega}_{t,x}, W \big]$ be a dissipative measure-valued martingale solution to the system \eqref{P1}--\eqref{P2}. Suppose $(s,\mathbf{Q})$ be a pair of stochastic processes which are adapted to the filtration $(\mf_{t})_{t\ge\,0}$ and which satisfies
	$$\D s=s_1\D t+s_2 \D W,$$
	$$\D \mathbf{Q}=\mathbf{Q}_1 \D t+ \mathbf{Q}_2 \D W$$
	with $$s\in C([0,T],W^{1,q}(\mathbb{T}^3)),\,\,\mathbf{Q}\in C([0,T];W^{1,q}(\mathbb{T}^3))\,\,\,\,\p-\text{a.s.}$$
	$$\mathbb{E}\bigg[\sup_{t\in[0,T]}\|s\|_{W^{1,q}(\mathbb{T}^3)}^2\bigg]^q + \mathbb{E}\bigg[\sup_{t\in[0,T]}\|\mathbf{Q}\|_{W^{1,q}}^2\bigg]^q\,\le\,C\,\, \text{for all}\,\,\, 2\le\,q\,\textless\,\infty,$$
	$$0\textless\,r_1\le\,s(t,x)\le\,r_2\,\,\,\p-\text{a.s.}$$
	Moreover, $s_i,\mathbf{Q}_i$ for $i=1,2$, satisfy
	$$s_1,\mathbf{Q}_1\in L^q(\Omega;L^q(0,T; W^{1,q}(\mathbb{T}^3)))\,\,\,\,s_2, \mathbf{Q}_2\in L^2(\Omega;L^2((0,T);L_2(\mathfrak{U};L^2(\mathbb{T}^3))),$$
	$$\bigg(\sum_{k\ge\,1}|s_2(e_k)|^q\bigg)^{1/q},\,\bigg(\sum_{k\ge\,1}|\mathbf{Q}_2(e_k)|^q\bigg)^{1/q}\in L^q(\Omega;L^q(0,T;L^q(\mathbb{T}^3)))$$
	Then the following relative energy inequality holds $\p$-a.s., for all $t\in[0,T]$
	\begin{align}\label{relative energy}
	\mathfrak{E}_{\mathrm{mv}}\big(\varrho,\mathbf{m}\,\big|\,s,\mathbf{Q}\big)(t)\le\,\mathfrak{E}_{\mathrm{mv}}\big(\varrho,\mathbf{m}\,\big|\,s,\mathbf{Q}\big)(0)+\mathcal{M}_{RE}(t)+\int_{0}^t\mathfrak{R}_{\mathrm{mv}}\big(\varrho,\mathbf{m}\,\big|\,s,\mathbf{Q}\big)(\tau)\,\D \tau
	\end{align}
	where
	\begin{align*}
	\begin{aligned}
	\mathfrak{R}_{\mathrm{mv}}\big(\varrho,\mathbf{m}\,\big|\,s,\mathbf{Q}\big)(t)&=\int_{\mathbb{T}^3}\big\langle \mathcal{V}_{t,x}^\omega;\varrho\mathbf{Q}-\mathbf{m}\big\rangle\cdot[\mathbf{Q}_1+\nabla\mathbf{Q}\cdot\mathbf{Q}]\D x +\int_{\mathbb{T}^3}\big\langle \mathcal{V}_{t,x}^\omega;\frac{(\mathbf{m}-\varrho\mathbf{Q})\otimes(\varrho\mathbf{Q}-\mathbf{m})}{\varrho}\big\rangle:\nabla \mathbf Q \D x\\&\qquad + \int_{\mathbb{T}^3}[(s-\big\langle \mathcal{V}_{t,x}^\omega;\varrho\big\rangle P''(s) s_1 + \nabla_x P'(s)\cdot(s\mathbf{Q}-\big\langle\mathcal{V}_{t,x}^\omega);\mathbf{m}\big\rangle)]\D x\\&\qquad+\int_{\mathbb{T}^3}[p(s)-\big\langle\mathcal{V}_{t,x}^\omega; p(s)\big\rangle]\Div(\mathbf{Q})\D x + \frac{1}{2}\sum_{k\ge\,1}\int_{\mathbb{T}^3}\bigg\langle\mathcal{V}_{t,x}^\omega;\varrho\bigg|\frac{\Psi_k(\varrho,\mathbf{m})}{\varrho}-\mathbf{Q}_2(e_k)\bigg|^2\bigg\rangle\D x\\
	&\qquad+\frac{1}{2}\sum_{k\ge\,1}\int_{\mathbb{T}^3}\big\langle\mathcal{V}_{t,x}^\omega;\varrho\big\rangle P''(s)|s_2(e_k)|^2\,\D x + \frac{1}{2}\sum_{k\ge\,1}\int_{\mathbb{T}^3}p''(s)|s_2(e_k)|^2\,\D x \\&\qquad
	-\int_{\mathbb{T}^3}\nabla\mathbf{Q}:\D \mu_m +\frac{1}{2}\int_{\mathbb{T}^3}d\mu_e. 
	\end{aligned}
	\end{align*}
Here $\mathcal{M}_{RE}$ is a real valued square integrable matingale.
	\noindent
	\begin{proof}
The proof of this proposition is a consequence of generalized It\^o formula, which is similar to the Lemma 4.1 in \cite{MKS01}. However, strictly speaking, the proof given in \cite{MKS01} is based on a slightly different notion of dissipative measure-valued martingale solutions. Therefore, for the sake of completness, we briefly mention the proof. Note that given condtions on stochastic process in this propostion allows us to apply It\^o formula to compute $\int_{\mathbb{T}^3}\big\langle\mathcal{V}_{t,x}^\omega;\mathbf{m}\big\rangle\cdot\mathbf{Q}\D x$. The result is
\begin{align}\label{q1}
&\D \bigg(\int_{\mathbb{T}^3}\big\langle\mathcal{V}_{t,x}^\omega;\mathbf{m}\big\rangle\cdot\mathbf{Q}\D x\bigg)=\int_{\mathbb{T}^3}\bigg(\big\langle\mathcal{V}_{t,x}^\omega;\mathbf{m}\big\rangle\cdot\mathbf{Q}_1 + \bigg\langle\mathcal{V}_{t,x}^\omega;\frac{\mathbf{m}\otimes\mathbf{m}}{\varrho}\bigg\rangle:\nabla\mathbf{Q}+\big\langle\mathcal{V}_{t,x}^\omega;p(\varrho)\big\rangle\Div\mathbf{Q}\bigg)\D x\D t \notag\\&\qquad+ \sum_{k\,\ge\,1}\int_{\mathbb{T}^3}\mathbf{Q}_2(e_k)\cdot\big\langle\mathcal{V}_{t,x}^\omega;\Psi_k(\varrho,\mathbf{m})\big\rangle\D x\D t +\int_{\mathbb{T}^3}\nabla\mathbf{Q}:\D\mu_m\D t + \int_{\mathbb{T}^3}\mathbf{Q}\cdot\big\langle\mathcal{V}_{t,x}^\omega;\Psi(\varrho,\mathbf{m})\big\rangle\D W\notag\\&\qquad+\int_{\mathbb{T}^3}\big\langle \mathcal{V}_{t,x}^\omega;\mathbf{m}\big\rangle\cdot\mathbf{Q}_2\D x \D W.
\end{align}
Similary, we get
\begin{align}\label{q2}
\D \bigg(\int_{\mathbb{T}^3}\frac{1}{2}\big\langle\mathcal{V}_{t,x}^\omega;\varrho\big\rangle|\mathbf{Q}|^2\D x\bigg)&= \int_{\mathbb{T}^3}\big\langle\mathcal{V}_{t,x}^\omega;\mathbf{m}\big\rangle\cdot\nabla\mathbf{Q}\cdot\mathbf{Q}\D x\D t+\int_{\mathbb{T}^3}\big\langle\mathcal{V}_{t,x}^\omega;\varrho\big\rangle\mathbf{Q}\cdot\mathbf{Q_1}\D x\D t \\\notag&\qquad+\frac{1}{2}\sum_{k\ge\,1} \int_{\mathbb{T}^3}\big\langle\mathcal{V}_{t,x}^\omega;\varrho\big\rangle|\mathbf{Q}_2(e_k)|^2\D x\D t +\int_{\mathbb{T}^3}\big\langle\mathcal{V}_{t,x}^\omega;\varrho\big\rangle\mathbf{Q}\cdot\mathbf{Q}_2\D x\D W,
\end{align}
and
\begin{align}\label{q3}
\D \bigg(\int_{\mathbb{T}^3}\big(P'(s)s-P(s)\big)\D x\bigg)=\int_{\mathbb{T}^3}p'(s) s_1 \D x\D t +\frac{1}{2}\sum_{k\,\ge\,1}\int_{\mathbb{T}^3}p''(s)|s_2(e_k)|^2\,\D x\D t +\int_{\mathbb{T}^3}p'(s) s_2\D x\D W,
\end{align}
and
\begin{align}\label{q4}
\D \bigg(\int_{\mathbb{T}^3}\big\langle\mathcal{V}_{t,x}^\omega;\varrho\big\rangle P'(s)\D x\bigg)&= \int_{\mathbb{T}^3}\big\langle\mathcal{V}_{t,x}^\omega;\mathbf{m}\big\rangle\cdot\nabla_x P'(s)\D x\D t +\int_{\mathbb{T}^3}\big\langle\mathcal{V}_{t,x}^\omega;\varrho\big\rangle P''(s) s_1 \D x \D t \notag\\&\qquad+ \frac{1}{2}\sum_{k\ge\,1}\int_{\mathbb{T}^3}\big\langle\mathcal{V}_{t,x}^\omega;\varrho\big\rangle P''(s)|s_2(e_k)|^2\D x\D t + \int_{\mathbb{T}^3}\big\langle\mathcal{V}_{t,x}^\omega;\varrho\big\rangle P''(s)s_2 \D x\D W.
\end{align}
Now we can combine \eqref{q1}-\eqref{q4} with \eqref{third condition measure-valued solution} and define $\mathcal{M}_{RE}$ be the sum of all martingale terms which come from \eqref{q1}-\eqref{q4} and  \eqref{third condition measure-valued solution} to obtain \eqref{relative energy}. 
\end{proof}
With the help of the Proposition~\ref{prop:01}, we now briefly describe the proof of the weak (measure-valued)--strong uniqueness principle. For detials of the proof, we refer to \cite[Chapter 6]{BrFeHobook} and \cite{MKS01}.
\end{Proposition}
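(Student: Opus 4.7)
The plan is to derive the inequality by applying a (generalized) It\^o/product rule to each of the five ingredients of $\mathfrak{E}'_{\mathrm{mv}}(\varrho,\mathbf{m}\,|\,s,\mathbf{Q})$ and then reassembling the deterministic drift as $\mathfrak{R}_{\mathrm{mv}}$ and the stochastic corrections as a single square-integrable martingale $\mathcal{M}_{RE}$. The starting point for the first piece $\int\langle\mathcal{V}^\omega;\tfrac12|\mathbf{m}|^2/\varrho+P(\varrho)\rangle\,dx+\mathcal{D}(t)$ is nothing but the energy inequality (item (09) of Definition~\ref{def:dissMartin}), which already supplies an upper bound involving the noise quadratic variation $\tfrac12\sum_k\langle\mathcal{V};\varrho^{-1}|\Psi_k|^2\rangle$, the defect $\tfrac12 d\mu_e$ and a real martingale $dM_E$. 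Thus the single one-sided inequality in the final formula will be inherited from here; everything else will come out as genuine equalities.

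For the mixed term $\int\langle\mathcal{V}^\omega;\mathbf{m}\rangle\cdot\mathbf{Q}\,dx$, I would apply the It\^o product rule: taking $\mathbf{Q}(t,\cdot)$ as a (time-dependent, stochastic) test function in the momentum identity \eqref{second condition measure-valued solution}, and pairing it with $d\mathbf{Q}=\mathbf{Q}_1\,dt+\mathbf{Q}_2\,dW$, produces the drift $\langle\mathcal{V};\mathbf{m}\rangle\cdot\mathbf{Q}_1$, the convective contribution $\langle\mathcal{V};\mathbf{m}\otimes\mathbf{m}/\varrho\rangle:\nabla\mathbf{Q}+\langle\mathcal{V};p(\varrho)\rangle\operatorname{div}\mathbf{Q}$, the defect $\int\nabla\mathbf{Q}:d\mu_m$, the cross-variation $\sum_k\mathbf{Q}_2(e_k)\cdot\langle\mathcal{V};\Psi_k\rangle$, and two martingale parts. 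A completely analogous computation for $\tfrac12\int\langle\mathcal{V};\varrho\rangle|\mathbf{Q}|^2\,dx$ uses $d|\mathbf{Q}|^2=2\mathbf{Q}\cdot d\mathbf{Q}+\sum_k|\mathbf{Q}_2(e_k)|^2\,dt$ together with the continuity identity \eqref{first condition measure-valued solution} tested against $\tfrac12|\mathbf{Q}|^2$. Finally, for $\int\langle\mathcal{V};\varrho\rangle P'(s)\,dx$ and $\int[P'(s)s-P(s)]\,dx$, It\^o's formula applied to $P'$ and to $P'(s)s-P(s)$ (as $C^2$ functions of the scalar $s$) combined with the continuity identity produces the $P''(s)s_1$, $\nabla P'(s)\cdot\langle\mathcal{V};\mathbf{m}\rangle$, the Tonelli-style pressure term $[p(s)-\langle\mathcal{V};p(\varrho)\rangle]\operatorname{div}\mathbf{Q}$, and the second-order It\^o corrections involving $P''(s)|s_2(e_k)|^2$. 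When all contributions are added with the signs dictated by the definition of $\mathfrak{E}'_{\mathrm{mv}}$, the cross-variation contributions assemble into the perfect square $\tfrac12\sum_k\langle\mathcal{V};\varrho|\Psi_k(\varrho,\mathbf{m})/\varrho-\mathbf{Q}_2(e_k)|^2\rangle$ appearing in $\mathfrak{R}_{\mathrm{mv}}$, and the purely deterministic drift rearranges into the remaining terms.

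To pass from the case $t\in[0,T]\setminus\mathcal{A}$ to all $t\in[0,T]$, I would use the definition of $\mathfrak{E}''_{\mathrm{mv}}$ via the right Lebesgue limit, invoking monotone approximation of the indicator $\mathbf{1}_{[0,t]}$ by smooth test functions $\psi_r$ exactly as in the passage that produced \eqref{third condition measure-valued solution}. The martingale $\mathcal{M}_{RE}$ is collected from: the stochastic part of $M_E$, the two martingales arising in the $\langle\mathcal{V};\mathbf{m}\rangle\cdot\mathbf{Q}$ step, and the martingales produced by It\^o on $P'(s)$ and on $\langle\mathcal{V};\varrho\rangle P'(s)$. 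A direct moment estimate using the hypotheses on $s,\mathbf{Q},s_2,\mathbf{Q}_2$ and the uniform bounds on $\langle\mathcal{V};\varrho\rangle$, $\langle\mathcal{V};\mathbf{m}\rangle$ in items (04), (05) shows $\mathcal{M}_{RE}$ is indeed square-integrable.

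The main obstacle is rigorously legitimizing the It\^o product rule in the measure-valued/distributional framework, since $\mathbf{Q}$ is only $W^{1,q}$ in space and \eqref{second condition measure-valued solution} is formulated against $C^\infty(\mathbb{T}^3;\mathbb{R}^3)$ test functions that are deterministic in time. The standard remedy is a two-step approximation: (i) mollify $\mathbf{Q}(t,\cdot)$ in the spatial variable so that the resulting process is smooth in $x$ and still an It\^o process in $t$, then apply the stochastic product formula for Hilbert-space-valued semimartingales (e.g.\ the generalized It\^o formula used in \cite[Ch.~6]{BrFeHobook}); (ii) remove the mollification using the spatial regularity of $\mathbf{Q}$ and the $L^p_\omega L^\infty_t L^\gamma_x$ bounds on $\langle\mathcal{V};\varrho\rangle$ together with Lemma~\ref{lemma001} to control the concentration defects through $\int_0^\tau\!\!\int\nabla\mathbf{Q}:d\mu_m$ and $\tfrac12\int_0^\tau\!\!\int d\mu_e$. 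A secondary subtlety is that, because $(s,\mathbf{Q})$ is only assumed to satisfy the stated It\^o SDEs (and is not necessarily a solution of any PDE), the only cancellations that occur are those forced algebraically by the square-completion producing the $|\Psi_k/\varrho-\mathbf{Q}_2(e_k)|^2$ term; keeping careful track of signs so that exactly the quoted $\mathfrak{R}_{\mathrm{mv}}$ remains is the most error-prone accounting step.
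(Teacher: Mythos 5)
Your proposal follows essentially the same route as the paper: the paper's proof likewise applies the generalized It\^o formula to the four cross/auxiliary terms $\int\langle\mathcal{V}^{\omega};\mathbf{m}\rangle\cdot\mathbf{Q}\,dx$, $\tfrac12\int\langle\mathcal{V}^{\omega};\varrho\rangle|\mathbf{Q}|^2\,dx$, $\int[P'(s)s-P(s)]\,dx$, $\int\langle\mathcal{V}^{\omega};\varrho\rangle P'(s)\,dx$, combines them with the energy inequality \eqref{third condition measure-valued solution} (the sole source of the one-sided bound), and collects the stochastic contributions into $\mathcal{M}_{RE}$. Your additional remarks on the square-completion yielding the $|\Psi_k/\varrho-\mathbf{Q}_2(e_k)|^2$ term and on legitimizing the It\^o product rule by spatial mollification are consistent with (indeed more explicit than) the paper's brief treatment, which defers these details to \cite{MKS01} and \cite[Chapter 6]{BrFeHobook}.
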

\begin{Theorem}[Weak-Strong Uniqueness] \label{Weak-Strong Uniqueness_01}
	Let $\big[ \big(\Omega,\mf, (\mf_{t})_{t\geq0},\mathbb{P} \big); \mathcal{V}^{\omega}_{t,x}, W \big]$ be a dissipative measure-valued martingale solution to the system \eqref{P1}--\eqref{P2}. On the same stochastic basis $\big(\Omega,\mf, (\mf_{t})_{t\geq0},\mathbb{P} \big)$, let us consider the unique maximal strong pathwise solution to the Euler system (\ref{P1}--\ref{P2}) given by 
	$(\bar{\varrho},\bar{{\bf u}},(\mathfrak{t}_R)_{R\in\mn},\mathfrak{t})$ driven by the same cylindrical Wiener process $W$ with the initial data $\bar{\varrho}(0), \bar{\varrho}\bar{\bf u}(0)$ satisfies 
	\begin{equation*} 
	\mathcal{V}^{\omega}_{0,x}= \delta_{\bar{\varrho}(0,x),(\bar{\varrho}\bar{\bf u})(0,x)}, \,\p-\mbox{a.s.,}\, \mbox{for a.e. } x \in \T^3.
	\end{equation*}
	Then a.e. $t\in[0,T]$ $\mathcal{D}(t\wedge\mathfrak{t}_R)=0$, $\p$-a.s., and $\p-\mbox{a.s.,}$
	\begin{equation}\label{uniqueness_01}
	\mathcal{V}^{\omega}_{t \wedge \mathfrak{t}_R,x}	= \delta_{\bar{\varrho}(t \wedge \mathfrak{t}_R,x), (\bar{\varrho}\bar{\bf u})(t \wedge \mathfrak{t}_R,x)}, \,\,  \mbox{for a.e. }(t,x)\in (0,T)\times \T^3.
	\end{equation}
\end{Theorem}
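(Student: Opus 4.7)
The plan is to run the standard relative energy / Gronwall argument, using Proposition~\ref{prop:01} applied to the pair $(s,\mathbf{Q}):=(\bar\varrho,\bar{\mathbf u})$ coming from the maximal strong pathwise solution of Theorem~\ref{thm:main}, stopped at the time $\mathfrak t_R$. The initial measure-valued datum is a Dirac mass at $(\bar\varrho(0),\bar\varrho(0)\bar{\mathbf u}(0))$, so a direct computation (using convexity of $\tfrac12|\mathbf m|^2/\varrho+P(\varrho)$) gives $\mathfrak{E}_{\mathrm{mv}}(\varrho,\mathbf m\mid \bar\varrho,\bar{\mathbf u})(0)=0$. Hence the content of Theorem~\ref{Weak-Strong Uniqueness_01} amounts to showing that $\mathbb E\,\mathfrak{E}_{\mathrm{mv}}(\cdot\wedge\mathfrak t_R)\equiv 0$ on $[0,T]$, from which both the Dirac form \eqref{uniqueness_01} and the vanishing of $\mathcal D$ follow at once (the latter because $\mathcal D(t)\le \mathfrak{E}_{\mathrm{mv}}(t)$ by definition).

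The first step is to read off $s_1,s_2,\mathbf Q_1,\mathbf Q_2$ from the strong formulation in Definition~\ref{def:strsol}. Since $\bar\varrho$ solves the continuity equation pathwise, $s_1=-\mathrm{div}(\bar\varrho\bar{\mathbf u})$ and $s_2=0$; dividing the momentum equation by $\bar\varrho$ (which is bounded away from zero up to $\mathfrak t_R$) yields
\[
\mathbf Q_1=-(\bar{\mathbf u}\cdot\nabla)\bar{\mathbf u}-\tfrac{1}{\bar\varrho}\nabla p(\bar\varrho),\qquad \mathbf Q_2(e_k)=\tfrac{1}{\bar\varrho}\Psi_k(\bar\varrho,\bar\varrho\bar{\mathbf u}).
\]
Substituting these into $\mathfrak R_{\mathrm{mv}}$ in Proposition~\ref{prop:01}, the deterministic ``transport'' terms combine, after adding and subtracting $\nabla_x P'(\bar\varrho)\cdot\langle\mathcal V;\mathbf m\rangle$ and using $p'(s)=s P''(s)$, into an integrand that is quadratic in the fluctuations $(\varrho-\bar\varrho,\mathbf m-\varrho\bar{\mathbf u})$. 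The pressure part yields, via the Bregman-type inequality for $P$, a nonnegative expression bounded above by the relative energy times $\|\nabla\bar{\mathbf u}\|_{L^\infty}+\|\nabla\bar\varrho\|_{L^\infty}$; the convective part $\langle\mathcal V;\tfrac{(\mathbf m-\varrho\bar{\mathbf u})\otimes(\varrho\bar{\mathbf u}-\mathbf m)}{\varrho}\rangle:\nabla\bar{\mathbf u}$ is bounded in absolute value by $\|\nabla\bar{\mathbf u}\|_{L^\infty}$ times the kinetic part of the relative energy.

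The more delicate piece is the noise-noise interaction
\[
\tfrac12\sum_{k\ge1}\int_{\mathbb T^3}\Bigl\langle\mathcal V;\varrho\,\bigl|\tfrac{\Psi_k(\varrho,\mathbf m)}{\varrho}-\tfrac{\Psi_k(\bar\varrho,\bar\varrho\bar{\mathbf u})}{\bar\varrho}\bigr|^2\Bigr\rangle\,\mathrm dx,
\]
which we compare with the remainder $\tfrac12\sum_k\int\langle\mathcal V;\varrho^{-1}|\Psi_k(\varrho,\mathbf m)|^2\rangle$ already sitting implicitly inside $\mathfrak E_{\mathrm{mv}}$. Expanding the square, using the Lipschitz assumptions \eqref{FG1}--\eqref{FG2} together with $\sum\beta_k<\infty$, and the lower bound $\bar\varrho\ge\underline{\bar\varrho}>0$ on $[0,\mathfrak t_R]$, this quadratic form is again bounded by $C_R\,\mathfrak E_{\mathrm{mv}}$. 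This is the step I expect to be the main obstacle, because the noise depends on the full state $(\varrho,\mathbf m)$ rather than only on $\mathbf u$, and one has to dominate the cross term $\langle\mathcal V;\tfrac{|\Psi_k(\varrho,\mathbf m)-\Psi_k(\bar\varrho,\bar\varrho\bar{\mathbf u})|^2}{\varrho}\rangle$ by the relative energy \emph{uniformly in the concentration measure}; the bound from Lemma~\ref{rhoutight131} dominating the defect measures by $\mathcal D(t)$ is crucial here, because it lets us absorb the concentration contributions appearing through $\mu_m$ and $\mu_e$ into $\mathcal D$, which in turn is part of $\mathfrak E_{\mathrm{mv}}$.

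Putting everything together, we arrive $\mathbb P$-a.s.\ at
\[
\mathfrak E_{\mathrm{mv}}(t\wedge\mathfrak t_R)\le \mathcal M_{RE}(t\wedge\mathfrak t_R)+C_R\int_0^{t\wedge\mathfrak t_R}\mathfrak E_{\mathrm{mv}}(\tau)\,\mathrm d\tau,
\]
where $C_R$ depends on $\sup_{s\le\mathfrak t_R}(\|\bar{\mathbf u}\|_{W^{1,\infty}}+\|\bar\varrho\|_{W^{1,\infty}}+\underline{\bar\varrho}^{-1})$, finite by the stopping condition in Definition~\ref{def:maxsol} and the continuity of $\bar\varrho,\bar{\mathbf u}$ in $W^{m,2}\hookrightarrow W^{1,\infty}$ for $m>7/2$. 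Taking expectations kills the martingale $\mathcal M_{RE}$ and the stochastic Gronwall lemma yields $\mathbb E\,\mathfrak E_{\mathrm{mv}}(t\wedge\mathfrak t_R)=0$ for every $t\in[0,T]$. Consequently $\mathcal D(\cdot\wedge\mathfrak t_R)=0$ a.e.\ a.s., and the nonnegativity of each term in $\mathfrak E_{\mathrm{mv}}$ combined with strict convexity of $P$ and $\mathbf m\mapsto |\mathbf m|^2/\varrho$ forces $\mathcal V^{\omega}_{t\wedge\mathfrak t_R,x}$ to be the Dirac mass at $(\bar\varrho,\bar\varrho\bar{\mathbf u})(t\wedge\mathfrak t_R,x)$ for a.e.\ $(t,x)$, which is \eqref{uniqueness_01}.
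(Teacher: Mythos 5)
Your proposal is correct and follows essentially the same route as the paper: substitute the stopped strong solution $(\bar\varrho(\cdot\wedge\mathfrak t_R),\bar{\mathbf u})$ for $(s,\mathbf Q)$ in the relative energy inequality of Proposition~\ref{prop:01}, bound the remainder $\mathfrak R_{\mathrm{mv}}$ by $c(R)\,\mathfrak E_{\mathrm{mv}}$, take expectations to kill the martingale, and conclude by Gronwall plus convexity. The only difference is one of exposition: the paper delegates the remainder estimate to \cite{MKS01} and \cite[Chapter 6]{BrFeHobook}, whereas you sketch the identification of $s_1,s_2,\mathbf Q_1,\mathbf Q_2$ and the noise--noise quadratic form explicitly, which is consistent with (and a useful elaboration of) the cited argument.
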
 
\begin{proof} 
Since $(\bar{\varrho}(\cdot\wedge\mathfrak{t}_R),\bar{\mathbf{u}})$ is the strong pathwise solution to stystem \eqref{P1}-\eqref{P2}, so we can replace  $(s,\mathbf{Q})$ by $(\bar{\varrho}(\cdot\wedge\mathfrak{t}_R),\bar{\mathbf{u}})$ in the relative energy inequality \eqref{relative energy}. Then we have $\p$-a.s., for all $t\in[0,T]$,
\begin{align}\label{relative energy 3}
\mathfrak{E}_{\mathrm{mv}}\big(\varrho,\mathbf{m}\,\big|\,\bar{\varrho},\bar{\mathbf{u}}\big)(t\wedge\mathfrak{t}_R)\le\,\mathfrak{E}_{\mathrm{mv}}\big(\varrho,\mathbf{m}\,\big|\,\bar{\varrho},\bar{\mathbf{u}}\big)(0)+\mathcal{M}_{RE}(t\wedge\mathfrak{t}_R)+\int_{0}^{t\wedge\mathfrak{t}_R}\mathfrak{R}_{\mathrm{mv}}\big(\varrho,\mathbf{m}\,\big|\,\bar{\varrho},\bar{\mathbf{u}}\big)(s)\D s,
\end{align}
where $\mathfrak{R}_{\mathrm{mv}}\big(\varrho,\mathbf{m}\,\big|\,\bar{\varrho},\bar{\mathbf{u}}\big)$ is given by \eqref{reminder} after replacing $(s,\mathbf{Q})$ by $(\bar{\varrho}(\cdot\wedge\mathfrak{t}_R),\bar{\mathbf{u}})$. Following \cite{MKS01} and \cite[Chapter 6]{BrFeHobook}, one can verify that
\begin{align}\label{reminder 2}
\int_{0}^{t\wedge\mathfrak{t}_R}\mathfrak{R}_{\mathrm{mv}}\big(\varrho,\mathbf{m}\,\big|\,\bar{\varrho},\bar{\mathbf{u}}\big)(s)\,\D s\le\,c(R)\int_{0}^{t\wedge\mathfrak{t}_R}\big(\mathfrak{E}_{\mathrm{mv}}\big(\varrho,\mathbf{m}\,\big|\,\bar{\varrho},\bar{\mathbf{u}}\big)(s)\, \D s.
\end{align}
In light of \eqref{relative energy 3} and \eqref{reminder 2}, a straightforward consequence of Gronwall's lemma yields, for all $t\in[0,T]$
\begin{align*}
\mathbb{E}\big[\mathfrak{E}_{\mathrm{mv}}\big(\varrho,\mathbf{m}\,\big|\,\bar{\varrho},\bar{\mathbf{u}}\big)(t\wedge\mathfrak{t}_R)\big]\le c(R)\,\mathbb{E}\big[\mathfrak{E}_{\mathrm{mv}}\big(\varrho,\mathbf{m}\,\big|\,\bar{\varrho},\bar{\mathbf{u}}\big)(0)\big].
\end{align*}
Since initial data are same for both solutions, right hand side of above inequality equals to zero. Therefore it implies that for all $t\in [0,T]$
$$\mathbb{E}\big[\mathfrak{E}_{\mathrm{mv}}\big(\varrho,\mathbf{m}\,\big|\,\bar{\varrho},\bar{\mathbf{u}}\big)(t\wedge\mathfrak{t}_R)\big]=0.$$
This also implies that
$$\lim_{r\to 0}\int_{t}^{t+r}\mathbb{E}\big[\mathfrak{E}_{\mathrm{mv}}\big(\varrho,\mathbf{m}\,\big|\,\bar{\varrho},\bar{\mathbf{u}}\big)(s\wedge\mathfrak{t}_R)\big]\D s=0.$$
In view of a priori estimates, a usual Lebesgue point argument, and application of Fubini's theorem reveals that for a.e. $t\in[0,T]$,
$$\mathbb{E}\big[\mathfrak{E}'_{\mathrm{mv}}\big(\varrho,\mathbf{m}\,\big|\,\bar{\varrho},\bar{\mathbf{u}}\big)(t\wedge\mathfrak{t}_R)\big]=0$$
Since the defect measure $\mathcal{D}\ge\,0$, we have for a.e. $t\in[0,T]$, $\mathcal{D}(t\wedge\mathfrak{t}_R)=0$, $\p$-a.s. Moreover, $\p-\mbox{a.s.}$
\begin{equation*}
\mathcal{V}^{\omega}_{t \wedge \mathfrak{t}_R,x}	= \delta_{\bar{\varrho}(t \wedge \mathfrak{t}_R,x), (\bar{\varrho}\bar{\bf u})(t \wedge \mathfrak{t}_R,x)}, \,\,  \mbox{for a.e. }(t,x)\in (0,T)\times \T^3.
\end{equation*} 
This finishes the proof of the theorem.
\end{proof}

\section{Proof of Theorem~\ref{dissipative solution}: Convergence to dissipative solution}
\label{dissipative solution 1}

In view of the Proposition~\ref{prop:skorokhod1} and convergence results given by \eqref{weak limit}-\eqref{weak limit 1}, we conclude that there is subsequence $\{(\vr_{h_k}(t), \vm_{h_k}(t))\}_{h_k>0}$ such that $\p$-a.s,
	$$\varrho_{h_k}\to\langle {\mathcal{V}^{\omega}_{t,x}}; \varrho \rangle\,\,\mbox{in}\,\,\,C_w([0,T],L^\gamma(\mathbb{T}^3)),$$
$$\mathbf{m}_{h_k}\to\langle {\mathcal{V}^{\omega}_{t,x}}; {\textbf m} \rangle\,\,\mbox{in}\,\,\,C_w([0,T],L^{\frac{2\gamma}{\gamma+1}}(\mathbb{T}^3)).$$ 
For the pointwise converegnce of numerical approximations, we can make use of Proposition~\ref{kconvergence}. Indeed, we obtain $\p$-a.s., there exists a subsequece $\{(\vr_{h_k}(t), \vm_{h_k}(t))\}_{h_k>0}$ such that
\begin{equation*}
	\begin{aligned}
		\frac 1N \sum_{k=1}^N \vr_{h_k} &\to\langle {\mathcal{V}^{\omega}_{t,x}}; \varrho \rangle, \ \mbox{as $N \rightarrow \infty$ a.e. in} \,\,(0,T)\times\T^3, \\
		\frac 1N \sum_{k=1}^N \vm_{h_k} &\to \langle {\mathcal{V}^{\omega}_{t,x}}; {\textbf m} \rangle, \ \mbox{as $N \rightarrow \infty$ a.e. in} \,\,(0,T)\times\T^3.
	\end{aligned}
\end{equation*}


\section{Proof of Theorem~\ref{T_ccE}: Convergence to Regular Solution}
\label{proof2}

We have proven that the numerical solutions $\{\vU_h\}_{h>0}$  to \eqref{scheme_bE} for the stochastic Euler system converges to the dissipative measure--valued martingake solution, in the sense of Definition~\ref{def:dissMartin}. Employing the corresponding weak (measure-valued)--strong uniqueness results (cf. Theorem~\ref{Weak-Strong Uniqueness_01}), we can show the strong convergence of numerical approximations to the strong solution of the system on its lifespan.

First note that, Proposition~\ref{prop:skorokhod1} and Theorem~\ref{Weak-Strong Uniqueness_01} gives the required weak-$*$ convergence. Indeed, from Proposition~\ref{prop:skorokhod1}, we have $\p$-a.s.,
$$\varrho_h(\cdot\wedge\mathfrak{t}_R)\to\langle {\mathcal{V}^{\omega}_{t,x}}; \varrho \rangle(\cdot\wedge\mathfrak{t}_R)\,\,\mbox{in}\,\,\,C_w([0,T],L^\gamma(\mathbb{T}^3)),$$
$$\mathbf{m}_h(\cdot\wedge\mathfrak{t}_R)\to\langle {\mathcal{V}^{\omega}_{t,x}}; {\textbf m} \rangle(\cdot\wedge\mathfrak{t}_R)\,\,\mbox{in}\,\,\,C_w([0,T],L^{\frac{2\gamma}{\gamma+1}}(\mathbb{T}^3)).$$ 
Combination of above convergence and Theorem~\ref{Weak-Strong Uniqueness_01} gives the required weak-* convergence. For the proof of strong convergence of density and momentum in $L^1(\T^3)$, note that from Proposition \ref{prop:skorokhod1}, Theorem \ref{Weak-Strong Uniqueness_01}, energy bounds \eqref{aprhov}-\eqref{estrhou2} and using the fact limit Young measure of any subsequence $(\delta_{\varrho_{h_k}(\cdot\wedge\mathfrak{t}_R),\mathbf{m}_{h_k}(\cdot\wedge\mathfrak{t}_R)})_{k\ge\,1}$ is $\delta_{\bar{\varrho}(\cdot \wedge \mathfrak{t}_R), \bar{\mathbf{m}}(\cdot\wedge \mathfrak{t}_R)}$, we have $\mathbb{P}$-a.s., sequence of young measure converges to dirac Young measure, i.e. $\p$-a.s.
$$ \delta_{\varrho_{h}(\cdot\wedge\mathfrak{t}_R),\mathbf{m}_{h}(\cdot\wedge\mathfrak{t}_R)}  \rightarrow \delta_{\bar{\varrho}(\cdot \wedge \mathfrak{t}_R), \bar{\mathbf{m}}(\cdot\wedge \mathfrak{t}_R)},\,\, \text{weak-$*$ in}\, \, L^{\infty}((0,T)\times \T^3; \mathcal{P}(\R^4))$$
By theory of Young measure \cite[Proposition 4.16]{Balder}, it implies that, $\p$-a.s. $\vr_{h}(\cdot\wedge\mathfrak{t}_R)$, $\vm_h(\cdot\wedge\mathfrak{t}_R)$ converges to $\bar{\varrho}(\cdot \wedge \mathfrak{t}_R), \bar{\mathbf{m}}(\cdot \wedge \mathfrak{t}_R)$ in measure respectively. Note that, $\p$-a.s. sequence $(\vr_{h}(\cdot\wedge\mathfrak{t}_R)$, $\vm_h(\cdot\wedge\mathfrak{t}_R))$ is uniformly integrable and converges in measure, therefore Vitali's convergence theorem implies that $\p$-a.s,
\begin{equation*}
	\begin{aligned}
		\vr_{h}(\cdot\wedge\mathfrak{t}_R) &\to \bar\vr(\cdot\wedge\mathfrak{t}_R) \ \mbox{ strongly in}\ L^1((0,T) \times \T^3), \\
		\vm_{h}(\cdot\wedge\mathfrak{t}_R) &\to \bar{\mathbf{m}}(\cdot \wedge \mathfrak{t}_R) \ \mbox{ strongly in}\ L^1((0,T) \times \T^3; \R^3)).
	\end{aligned}
\end{equation*}
This finishes the proof of the theorem.

\section*{Acknowledgements}
U.K. acknowledges the support of the Department of Atomic Energy,  Government of India, under project no.$12$-R$\&$D-TFR-$5.01$-$0520$, and India SERB Matrics grant MTR/$2017/000002$.


\end{document}